\numberwithin{equation}{section}
\theoremstyle{plain}
\newtheorem{theorem}{Theorem}[section]
\newtheorem{proposition}[theorem]{Proposition}
\newtheorem{lemma}[theorem]{Lemma}
\newtheorem{corollary}[theorem]{Corollary}
\newtheorem{conjecture}[theorem]{Conjecture}
\theoremstyle{definition}
\newtheorem{definition}[theorem]{Definition}
\newtheorem{remark}[theorem]{Remark}
\newtheorem{example}[theorem]{Example}
\newcommand\R{\mathbb{R}}
\newcommand\Z{\mathbb{Z}}
\newcommand\N{\mathbb{N}}
\begin{document}

\title{Density Hales-Jewett and Moser numbers}

\author{D.H.J. Polymath}
\address{http://michaelnielsen.org/polymath1/index.php}

\subjclass{05D05, 05D10}

\begin{abstract}  
For any $n \geq 0$ and $k \geq 1$, the \emph{density Hales-Jewett number} $c_{n,k}$ is defined as the size of the largest subset of the cube $[k]^n$ := $\{1,\ldots,k\}^n$ which contains no combinatorial line; similarly, the Moser number $c'_{n,k}$ is the largest subset of the cube $[k]^n$ which contains no geometric line.  A deep theorem of Furstenberg and Katznelson \cite{fk1}, \cite{fk2}, \cite{mcc} shows that $c_{n,k}$ = $o(k^n)$ as $n \to \infty$ (which implies a similar claim for $c'_{n,k}$); this is already non-trivial for $k = 3$. Several new proofs of this result have also been recently established \cite{poly}, \cite{austin}.

Using both human and computer-assisted arguments, we compute several values of $c_{n,k}$ and $c'_{n,k}$ for small $n,k$. For instance the sequence $c_{n,3}$ for $n=0,\ldots,6$ is $1,2,6,18,52,150,450$, while the sequence $c'_{n,3}$ for $n=0,\ldots,6$ is $1,2,6,16,43,124,353$. We also prove some results for higher $k$, showing for instance that an analogue of the LYM inequality (which relates to the $k = 2$ case) does not hold for higher $k$, and also establishing the asymptotic lower bound $c_{n,k} \geq k^n \exp\left( - O(\sqrt[\ell]{\log n})\right)$ where $\ell$ is the largest integer such that $2k > 2^\ell$. 
\end{abstract}

\maketitle


\section{Introduction}

For any integers $k \geq 1$ and $n \geq 0$, let $[k] := \{1,\ldots,k\}$, and define $[k]^n$ to be the cube of words of length $n$ with alphabet in $[k]$.  Thus for instance $[3]^2 = \{11,12,13,21,22,23,31,32,33\}$.

We define a \emph{combinatorial line} in $[k]^n$ to be a set of the form $\{ w(i): i = 1,\ldots,k\} \subset [k]^n$, where $w \in ([k] \cup \{x\})^n \backslash [k]^n$ is a word of length $n$ with alphabet in $[k]$ together with a ``wildcard'' letter $x$ which appears at least once, and $w(i) \in [k]^n$ is the word obtained from $w$ by replacing $x$ by $i$; we often abuse notation and identify $w$ with the combinatorial line $\{ w(i): i = 1,\ldots,k\}$ it generates.  Thus for instance, in $[3]^2$ we have $x2 = \{12,22,32\}$ and $xx = \{11,22,33\}$ as typical examples of combinatorial lines. In general, $[k]^n$ has $k^n$ words and $(k+1)^n-k^n$ lines.

\begin{figure}[tb]
\centerline{\includegraphics[height=6cm,width=10cm]{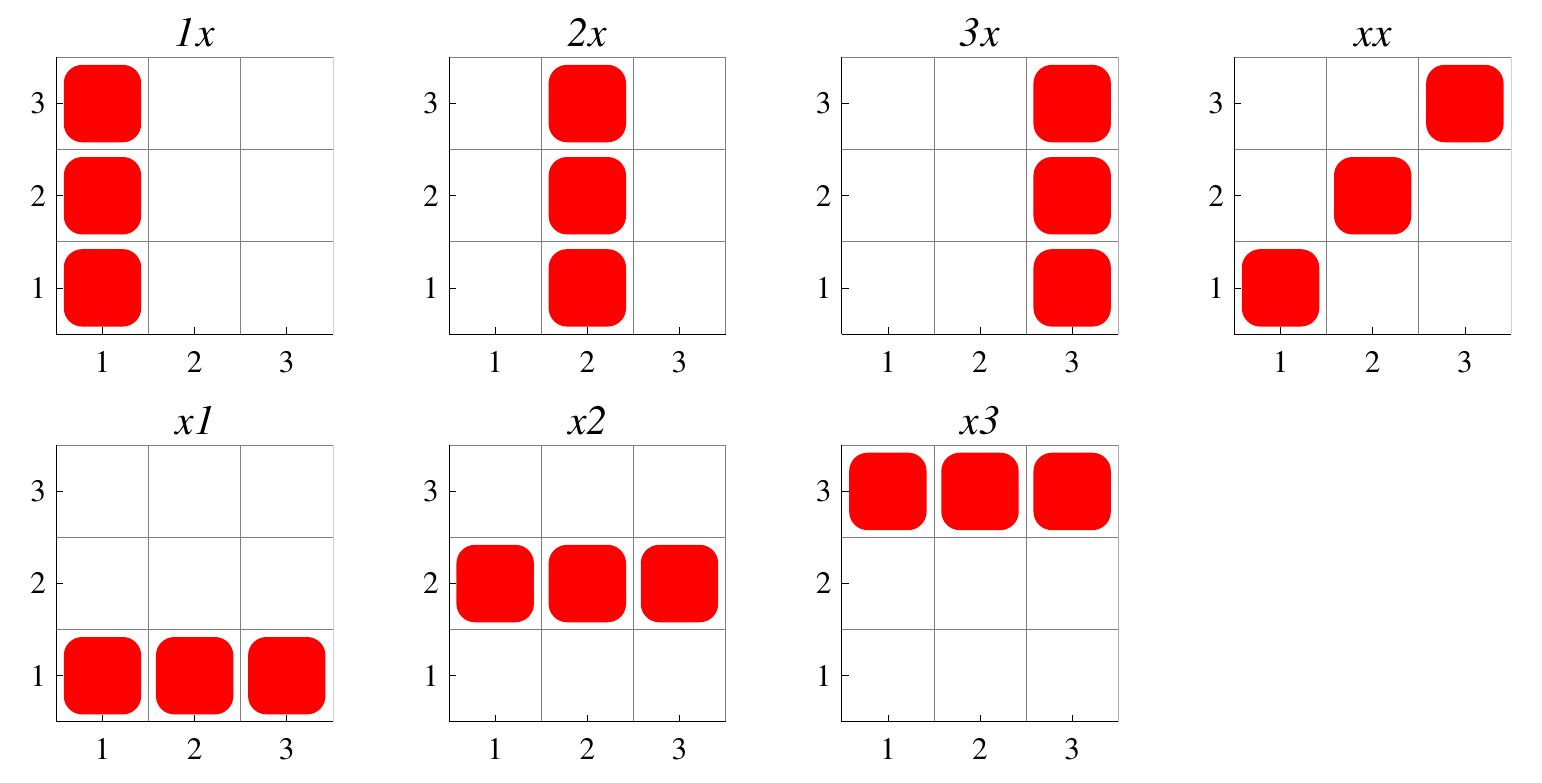}}
\caption{Combinatorial lines in $[3]^2$.}
\label{fig-line}
\end{figure}

A set $A \subset [k]^n$ is said to be \emph{line-free} if it contains no combinatorial lines.  Define the \emph{$(n,k)$ density Hales-Jewett number} $c_{n,k}$ to be the maximum cardinality $|A|$ of a line-free subset of $[k]^n$.  Clearly, one has the trivial bound $c_{n,k} \leq k^n$.  A deep theorem of Furstenberg and Katznelson~\cite{fk1}, \cite{fk2} asserts that this bound can be asymptotically improved:

\begin{theorem}[Density Hales-Jewett theorem]\label{dhj}  For any fixed $k \geq 2$, one has $\lim_{n \to \infty} c_{n,k}/k^n = 0$.
\end{theorem}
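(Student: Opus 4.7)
The plan is to follow the density-increment strategy of the Polymath proof, arguing by induction on $k$. The base case $k=2$ is Sperner's theorem, which gives $c_{n,2} = \binom{n}{\lfloor n/2\rfloor} = O(2^n/\sqrt{n})$, since a combinatorial line in $[2]^n$ is exactly a comparable pair of subsets of $[n]$. So the task is to establish DHJ($k$) assuming DHJ($k-1$).

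Given a line-free set $A \subset [k]^n$ of density $\delta = |A|/k^n$, the goal is to produce a combinatorial subspace (that is, the image of an embedding $[k]^{n'} \hookrightarrow [k]^n$ induced by a word with $n'$ wildcards of $k$ distinct flavours) on which $A$ has density at least $\delta + c(\delta,k)$ for some positive $c$. Iterating this increment at most $O(1/c(\delta,k))$ times forces either density $>1$ or an application of DHJ($k$) in dimension reduced to $O_{\delta,k}(1)$, which gives the desired contradiction once $n$ is sufficiently large in terms of $\delta$ and $k$.

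The core of the argument, and the main obstacle, is the density-increment step itself. The natural approach is first to pass from the uniform measure on $[k]^n$ to the \emph{equal-slices} measure (the measure that averages uniformly over all combinatorial types), since the equal-slices measure is better behaved under the restriction and embedding operations used in the proof. The next step is a structural dichotomy: either $A$ already has a density increment on a subspace obtained by fixing a few coordinates, or else the indicator function of $A$ correlates nontrivially with an $ij$-insensitive set $B \subset [k]^n$ for some distinct $i,j \in [k]$, that is, a set whose membership is unchanged when $i$ and $j$ are swapped in any single coordinate. This correlation step is proved by exploiting the line-freeness: a combinatorial line in direction $ij$ lies inside $A$ as soon as $A$ hits both endpoints, so if $A$ avoided all such lines then certain averaging identities would force correlation with an $ij$-insensitive set.

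Once such a correlation is in hand, one applies the inductive hypothesis DHJ($k-1$): an $ij$-insensitive set in $[k]^n$ can be viewed, after identifying $i$ and $j$, as a set in an effective $[k-1]^n$-structure, and DHJ($k-1$) locates a combinatorial subspace of high dimension inside $B$, on which $A$ inherits the density increment. Quantifying this chain of implications is delicate because each step loses a bit of density and a lot of dimension, so the inductive bound $c_{n,k}/k^n \to 0$ degrades in tower fashion with $k$; but for fixed $k$ this is enough to conclude $\lim_{n\to\infty} c_{n,k}/k^n = 0$. The hardest and most technical ingredient is the correlation-with-insensitive-sets step, which in the Polymath treatment requires a multidimensional Sperner-type lemma on the equal-slices measure and a careful double-counting of lines through pairs of points in $A$.
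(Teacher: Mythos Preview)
The paper does not prove Theorem~\ref{dhj}; it is quoted as a deep result of Furstenberg and Katznelson, with pointers to the alternative proofs of Austin and of the sister Polymath project. There is therefore no proof in the paper to compare your proposal against.

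What you have written is a reasonable high-level summary of the Polymath density-increment strategy from~\cite{poly}, but it is a plan rather than a proof: every substantive step (passing to equal-slices measure without losing density, the structural dichotomy, the correlation lemma, the partitioning of an insensitive set into subspaces via DHJ($k-1$)) is asserted rather than carried out. A few details are also not quite right as stated. A combinatorial line in $[k]^n$ has $k$ points, so the phrase ``lies inside $A$ as soon as $A$ hits both endpoints'' is not correct for $k\geq 3$; what the Polymath argument actually uses is that if $A$ is line-free and one freezes the letter $k$ to behave like $k-1$, the resulting set in an effective $(k-1)$-alphabet is still dense, and DHJ($k-1$) supplies structure there. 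Also, the correlation is not with a single $ij$-insensitive set but with an intersection of such sets over several pairs (a ``special set of complexity $k-1$''), and turning that correlation into a genuine subspace on which $A$ has increased density is itself a multi-step argument. If you intend this as a pointer to the literature that is fine, but as written it would not be accepted as a self-contained proof.
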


\begin{remark} The difficulty of this theorem increases with $k$.  For $k=1$, one clearly has $c_{n,1}=1$.  For $k=2$, a classical theorem of Sperner~\cite{sperner} asserts, in our language, that $c_{n,2} = \binom{n}{\lfloor n/2\rfloor}$.  The case $k=3$ is already non-trivial (for instance, it implies Roth's theorem~\cite{roth} on arithmetic progressions of length three) and was first established in \cite{fk1} (see also \cite{mcc}).  The case of general $k$ was first established in~\cite{fk2} and has a number of implications, in particular implying Szemer\'edi's theorem~\cite{szem} on arithmetic progressions of arbitrary length.
\end{remark}

The Furstenberg-Katznelson proof of Theorem~\ref{dhj} relies on ergodic-theory techniques and does not give an explicit decay rate for $c_{n,k}$.  Recently, two further proofs of this theorem have appeared, by Austin~\cite{austin} and by the sister Polymath project to this one~\cite{poly}.  The proof of~\cite{austin} also uses ergodic theory, but the proof in~\cite{poly} is combinatorial and gave effective bounds for $c_{n,k}$ in the limit $n \to \infty$. For example, if $n$ can be written as an exponential tower $2 \uparrow 2 \uparrow 2 \uparrow \ldots \uparrow 2$ with $m$ 2s, then $c_{n,3} \ll 3^n m^{-1/2}$.  However, these bounds are not believed to be sharp, and in any case are only non-trivial in the asymptotic regime when $n$ is sufficiently large depending on $k$.

Our first result is the following asymptotic lower bound. The construction is based on the recent refinements \cite{elkin,greenwolf,obryant} of a well-known construction of Behrend~\cite{behrend} and Rankin~\cite{rankin}. The proof of Theorem~\ref{dhj-lower} is in Section~\ref{dhj-lower-sec}. Let $r_k(n)$ be the maximum size of a subset of $[n]$ that does not contain a $k$-term arithmetic progression.

\begin{theorem}[Asymptotic lower bound for $c_{n,k}$]\label{dhj-lower}  For each $k\geq 3$, there is an absolute constant $C>0$ such that 
  \[ c_{n,k} \geq C k^n \left(\frac{r_k(\sqrt{n})}{\sqrt{n}}\right)^{k-1} = k^n \exp\left( - O(\sqrt[\ell]{\log n}) \right), \]
where $\ell$ is the largest integer satisfying $2k>2^{\ell}$. More specifically,
  \[ c_{n,k} \geq C k^{n-\alpha(k) \sqrt[\ell]{\log n} + \beta(k) \log\log n},\]
where all logarithms are base-$k$, and $\alpha(k) = (\log 2)^{1-1/\ell} \ell 2^{(\ell-1)/2-1/\ell}$ and $\beta(k)=(k-1)/(2\ell)$.
\end{theorem}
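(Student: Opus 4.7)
The strategy is to pull back, through a linear ``score'' map $\sigma\colon[k]^n\to\Z^{k-1}$, a Cartesian power of a one-dimensional $k$-AP-free set; this converts the lower bounds on $r_k$ (from the cited Behrend-Rankin-Elkin-Green-Wolf-O'Bryant constructions) directly into lower bounds on $c_{n,k}$. Partition $[n]$ into $k-1$ blocks $I_1,\ldots,I_{k-1}$ of equal size $m:=\lfloor n/(k-1)\rfloor$ and, for $w\in[k]^n$, set $\sigma_j(w):=\sum_{p\in I_j}w(p)$ and $\sigma(w):=(\sigma_1(w),\ldots,\sigma_{k-1}(w))$. For any combinatorial line with nonempty wildcard set $W\subset[n]$ and fixed letters $v$ on the complement, a direct computation gives
\[
  \sigma_j(w^{(\ell)})=\tau_j+\ell\,d_j,\qquad d_j:=|W\cap I_j|,\ \tau_j:=\sum_{p\in I_j\setminus W}v(p),
\]
so the $k$ points $\sigma(w^{(1)}),\ldots,\sigma(w^{(k)})$ form an arithmetic progression in $\Z^{k-1}$ of common difference $(d_1,\ldots,d_{k-1})\neq 0$ (nonzero since $W\neq\emptyset$).

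\textbf{Line-free set.} Choose $T\subset\Z$ to be $k$-AP-free of size $r_k(N)$ in an interval of length $N\sim\sqrt n$ centred near $\mu:=(k+1)m/2$ (any Behrend-type construction can be translated into this window), and set $S:=T^{k-1}\subset\Z^{k-1}$. A nontrivial $k$-AP in $S$ would project, in some coordinate with $d_j\neq 0$, to a nontrivial $k$-AP in $T$, which is forbidden; hence $A:=\sigma^{-1}(S)$ is combinatorial-line-free by the previous paragraph.

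\textbf{Counting.} I would lower-bound $|A|$ with a multidimensional local central limit theorem applied to $\sigma$. Its components $\sigma_j$ are independent sums of $m$ i.i.d.\ uniform $[k]$-valued variables, each with mean $\mu$ and variance $\Theta(n)$, so a standard characteristic-function argument yields
\[
  |\sigma^{-1}(y)|\geq c_k\,\frac{k^n}{n^{(k-1)/2}}
\]
uniformly for $y$ within $O(\sqrt n)$ of $\mu\mathbf 1$, in particular for every $y\in S$. Summing over $y\in S$ yields $|A|\geq c_k\,k^n\bigl(r_k(\sqrt n)/\sqrt n\bigr)^{k-1}$, which is the first displayed inequality. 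For the explicit form, I would substitute the refined Behrend-type bound $r_k(N)\geq C\,N(\log N)^{1/(2\ell)}k^{-\alpha'(\ell)(\log N)^{1/\ell}}$ with $\ell$ the largest integer satisfying $2k>2^\ell$, raise to the $(k-1)$-st power, and use $\log\sqrt n=\tfrac12\log n$; the polylog factor produces $\beta(k)=(k-1)/(2\ell)$ and the exponential factor, after collecting the $(k-1)$-fold product and the $2^{-1/\ell}$ from the rescaling, produces $\alpha(k)$.

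\textbf{Main obstacle.} The delicate step is the local CLT estimate: it must hold with the density bound $\gtrsim k^n/n^{(k-1)/2}$ \emph{uniformly} at every $y\in S$ (not only at the mean), and without losing polylogarithmic factors, since preserving these factors is precisely what yields the $+\beta(k)\log\log n$ correction in the exponent. A careful Berry-Esseen-type argument for the $k$-valued multinomial should suffice, but the bookkeeping of constants and lower-order terms is where the work lies.
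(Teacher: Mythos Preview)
Your argument is correct and reaches the same bound, but by a genuinely different construction than the paper's. The paper works through the Fujimura framework: it uses the \emph{letter-count} vector $(a_1(w),\ldots,a_{k-1}(w))$ as the statistic, then composes with a $(k-1)\times(k-1)$ circulant matrix $M$ so that the vertices of a simplex in $\Delta_{n,k}$ project, in each coordinate, onto the full progression $\{0,r,2r,\ldots,(k-1)r\}$; pulling back a product set $S^{k-1}$ with $S$ $k$-AP-free therefore gives a Fujimura set $B$, and $A_B=\bigcup_{\vec a\in B}\Gamma_{\vec a}$ is line-free. Your statistic is instead the \emph{block-sum} vector $\sigma(w)=(\sum_{p\in I_j}w(p))_j$, under which a combinatorial line maps to an honest $k$-term arithmetic progression $\tau+\ell d$ in $\Z^{k-1}$; this is arguably cleaner, since no auxiliary linear map is needed and the AP structure is immediate. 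The trade-off is that the paper's sets are unions of cells $\Gamma_{\vec a}$ (hence invariant under coordinate permutations and fitting the Fujimura narrative used elsewhere in the paper), whereas yours are not. Both routes finish the same way: a local CLT gives fibre sizes $\gtrsim k^n/n^{(k-1)/2}$ uniformly over a window of width $\Theta(\sqrt n)$, and then one inserts O'Bryant's bound for $r_k$.

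One small repair: as written your blocks $I_1,\ldots,I_{k-1}$ of size $\lfloor n/(k-1)\rfloor$ need not cover $[n]$, and a line whose wildcard set lies entirely in the leftover coordinates has $d=0$, so it would survive in $\sigma^{-1}(S)$. Simply let one block absorb the at most $k-2$ leftover coordinates; the blocks remain of size $\Theta(n)$, the independence and the local CLT estimate are unaffected, and now $W\neq\emptyset$ forces $d\neq 0$.
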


In the case of small $n$, we focus primarily on the first non-trivial case $k=3$.  We have computed the following explicit values of $c_{n,3}$ (entered in the OEIS~\cite{oeis} as A156762):

\begin{theorem}[Explicit values of $c_{n,3}$]\label{dhj-upper}  We have $c_{0,3} = 1$, $c_{1,3} = 2$, $c_{2,3} = 6$, $c_{3,3} = 18$, $c_{4,3} = 52$, $c_{5,3}=150$, and $c_{6,3}=450$.  
\end{theorem}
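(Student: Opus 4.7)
The plan is to establish matching lower and upper bounds for each of the seven stated values. The lower bounds are by explicit construction. For $n\leq 3$ the constructions are elementary; for instance the bound $c_{3,3}\geq 18$ is witnessed by $\{w\in[3]^3 : w_1+w_2+w_3\not\equiv 0\pmod 3\}$, whose line-freeness follows because on any combinatorial line the three digit sums form an arithmetic progression modulo $3$ of common difference equal to the number of wildcards (a value in $\{1,2\}$ when $n=3$), so the three sums cover all residues. For $n\in\{4,5,6\}$ I would exhibit explicit line-free configurations of sizes $52$, $150$, $450$ (found by hand or by computer search and displayed up to the natural action of $S_n\ltimes S_3^n$); checking that a single specified set is line-free is a routine finite verification.

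For the upper bounds the main tool is the slicing decomposition. Given any line-free $A\subset[3]^n$, set $A_i:=\{w\in[3]^{n-1} : iw\in A\}$ for $i=1,2,3$; each $A_i$ is line-free, and for every combinatorial line $\ell$ in $[3]^{n-1}$ one cannot have $\ell(i)\in A_i$ simultaneously for all $i$ (otherwise the wildcard-in-first-coordinate extension of $\ell$ embeds into $A$). In particular $A_1\cap A_2\cap A_3=\emptyset$. This gives the trivial recursion $c_{n,3}\leq 3c_{n-1,3}$, which is exact from $n=2$ to $n=3$ ($6\to 18$) and from $n=5$ to $n=6$ ($150\to 450$), so once $c_{5,3}=150$ is in hand the bound $c_{6,3}\leq 450$ follows immediately. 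The recursion is loose by $2$ at $n=4$ and by $6$ at $n=5$, and closing those gaps is the crux of the theorem.

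The main obstacle is therefore the two upper bounds $c_{4,3}\leq 52$ and $c_{5,3}\leq 150$. The strategy is a pruned search over triples $(A_1,A_2,A_3)$ of line-free subsets of $[3]^{n-1}$ satisfying the cross-slice constraint above, quotiented by the full symmetry group $S_n\ltimes S_3^n$ and also exploiting the fact that $\sum|A_i|=|A_1\cup A_2\cup A_3|+|A_1\cap A_2|+|A_2\cap A_3|+|A_1\cap A_3|$ since the triple intersection is empty. For $n=4$ this search can in principle be carried out by hand: one first classifies the line-free subsets of $[3]^3$ of size close to $18$, observes that $|A|\geq 53$ forces at least two slices to be extremal of size $18$, and then rules out case by case every way to complete the third slice to size at least $17$. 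For $n=5$, and as a mechanical double-check for $n=4$, I would resort to computer assistance, encoding the problem as an integer program with one $0/1$ variable per word of $[3]^n$ and one linear constraint per combinatorial line, or equivalently as a SAT/branch-and-bound enumeration guided by the slice recurrence. The hard part is not any single conceptual step but rather organising this large but finite case analysis into a reproducible certificate that proves $c_{4,3}\leq 52$ and $c_{5,3}\leq 150$; once this is done, combination with the inductive bound $c_{6,3}\leq 3c_{5,3}=450$ and the matching lower bounds yields all seven equalities.
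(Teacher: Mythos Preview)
Your outline is essentially the paper's approach: Fujimura-type constructions for the lower bounds (your digit-sum set for $n=3$ is exactly the paper's $A_{B_{0,3}}$), the slicing bound $c_{n,3}\le 3c_{n-1,3}$ to reduce $n=6$ to $n=5$, and a classification of near-extremal line-free subsets of $[3]^{n-1}$ to close the gaps at $n=4,5$.

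Two corrections and one comparison. First, the symmetry group you invoke, $S_n\ltimes S_3^n$, is wrong: independent per-coordinate permutations of the alphabet do \emph{not} preserve combinatorial lines (e.g.\ swapping $1\leftrightarrow 2$ in only the first coordinate sends the line $xx=\{11,22,33\}$ to $\{21,12,33\}$). The correct combinatorial symmetry group is $S_n\times S_3$, of order $n!\cdot 3!$. Second, in your $n=3$ line-freeness check the number of wildcards can be $3$, not only $1$ or $2$; the set is still line-free because the all-wildcard line $\{111,222,333\}$ has every digit sum $\equiv 0$, but your stated reason does not cover this case.

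On $n=5$: you propose computer assistance (integer program or SAT), and the paper acknowledges this is how the bounds were first obtained. However, the paper then supplies a fully computer-free proof: it classifies all line-free subsets of $[3]^4$ of size $\ge 50$ (Lemma~3.4), and then runs a hand case analysis on how three such slices can fit together in $[3]^5$, repeatedly re-slicing along different axes to derive a contradiction from $|A|=151$. So while your plan would produce a valid certificate, the paper's contribution at $n=5$ is precisely the human argument you defer to the machine.
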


This result is established in Sections~\ref{dhj-lower-sec},~\ref{dhj-upper-sec}.  Initially these results were established by an integer program, but we provide completely computer-free proofs here.  The constructions used in Section~\ref{dhj-lower-sec} give reasonably efficient constructions for larger values of $n$; for instance, they show that $3^{99} \leq c_{100,3} \leq 2 \times 3^{99}$.  See Section~\ref{dhj-lower-sec} for further discussion.


A variant of the density Hales-Jewett theorem has also been studied in the literature.  Define a \emph{geometric line} in $[k]^n$ to be any set of the form $\{ a+ir: i=1,\ldots,k\}$ in $[k]^n$, where we identify $[k]^n$ with a subset of $\Z^n$, and $a, r \in \Z^n$ with $r \neq 0$.  Equivalently, a geometric line takes the form $\{ w( i, k+1-i ): i =1,\ldots,k \}$, where $w \in ([k] \cup \{x,\overline{x}\})^n \backslash [k]^n$ is a word of length $n$ using the numbers in $[k]$ and two wildcards $x, \overline{x}$ as the alphabet, with at least one wildcard occurring in $w$, and $w(i,j) \in [k]^n$ is the word formed by substituting $i,j$ for $x,\overline{x}$ respectively.  Figure~\ref{fig-geomline} shows the eight geometric lines in $[3]^2$.  Clearly every combinatorial line is a geometric line, but not conversely.  In general, $[k]^n$ has $((k+2)^n-k^n)/2$ geometric lines. 

\begin{figure}[tb]
\centerline{\includegraphics[height=6cm,width=10cm]{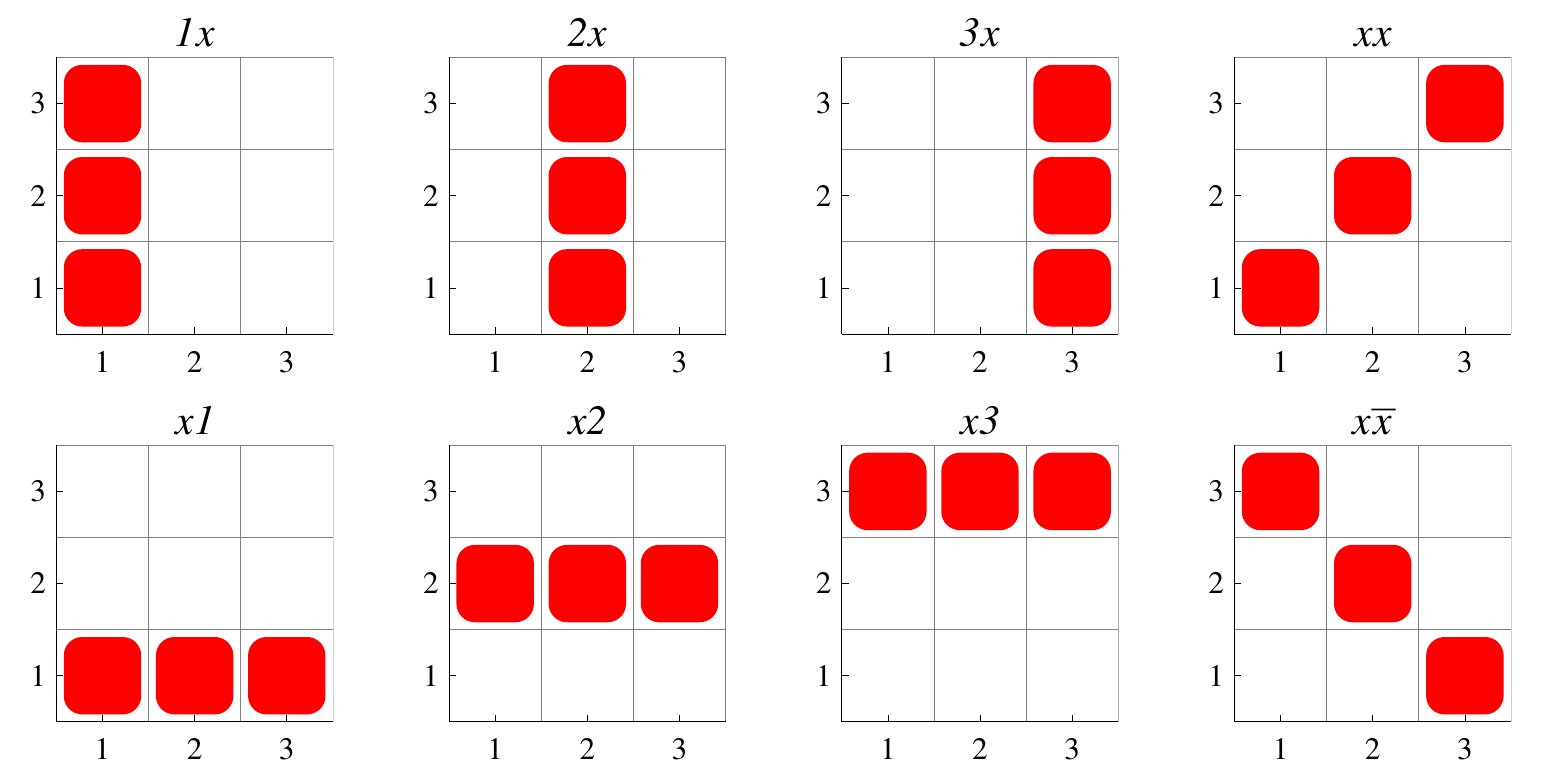}}
\caption{Geometric lines in $[3]^2$.}
\label{fig-geomline}
\end{figure}

Define a \emph{Moser set} in $[k]^n$ to be a subset of $[k]^n$ that contains no geometric lines, and let $c'_{n,k}$ be the maximum cardinality $|A|$ of a Moser set in $[k]^n$.  Clearly one has $c'_{n,k} \leq c_{n,k}$, so in particular from Theorem~\ref{dhj} one has $c'_{n,k}/k^n \to 0$ as $n \to \infty$.  (Interestingly, there is no known proof of this fact that does not go through Theorem~\ref{dhj}, even for $k=3$.)  Again, $k=3$ is the first non-trivial case: it is clear that $c'_{n,1}=0$ and $c'_{n,2}=1$ for all $n$.

The question of computing $c'_{n,3}$ was first posed by Moser~\cite{moser}.  Prior to our work, the values
$$ c'_{0,3}=1; c'_{1,3}=2; c'_{2,3}=6; c'_{3,3}=16; c'_{4,3}=43$$
were known~\cite{chvatal2},~\cite{chandra} (this is Sequence A003142 in the OEIS~\cite{oeis}).  We extend this sequence slightly:

\begin{theorem}[Values of $c'_{n,3}$ for small $n$]\label{moser}  We have $c'_{0,3} = 1$, $c'_{1,3} = 2$, $c'_{2,3} = 6$, $c'_{3,3} = 16$, $c'_{4,3} = 43$, $c'_{5,3} = 124$, and $c'_{6,3} = 353$.
\end{theorem}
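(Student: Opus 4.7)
My plan is to split the proof into lower-bound constructions and matching upper bounds at each $n$; the values for $n \leq 4$ are already in the literature \cite{chvatal2,chandra}, and I would verify them by slicing $[3]^n = \bigsqcup_{i=1}^{3} \{w : w_n = i\}$ into three copies of $[3]^{n-1}$ together with a short case analysis on inter-slice constraints. The substantive new content is therefore $c'_{5,3} = 124$ and $c'_{6,3} = 353$.

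For the lower bounds $c'_{5,3}\geq 124$ and $c'_{6,3} \geq 353$, the ansatz I would use is a union of ``$2$-count layers'' $L_S := \{w \in [3]^n : b(w) \in S\}$, where $b(w) := |\{i : w_i = 2\}|$ records the number of $2$'s in $w$, further refined by a subset condition within each layer. The key structural observation is that every geometric line in $[3]^n$ has its two endpoints with the \emph{same} $b$-value and its midpoint with \emph{strictly greater} $b$-value (the excess equalling the number of wildcards in the line). Choosing $S$ to avoid dangerous layer pairs $(b_0, b_0 + d)$ handles most cross-layer lines, and the remaining in-layer lines (endpoints in $L_{b_0}$ whose midpoint lies in a selected $L_{b_0+d}$) can be broken by a parity or Hamming-distance restriction on the $\{1,3\}$-coordinates of endpoints. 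I would tune $S$ and the in-layer refinement to hit the target sizes $124$ and $353$ exactly, verifying by inspection that no geometric line survives.

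For the upper bounds the naive slicing bound $c'_{n,3} \leq 3 c'_{n-1,3}$ yields $129$ at $n = 5$ and $372$ at $n = 6$, so the task is to save $5$ and $19$ respectively. My approach is to stratify a hypothetical extremal Moser set $A$ by type $(a,b,c)$ with $a+b+c = n$, set $x_{a,b,c} := |A \cap \{w : w \text{ has type } (a,b,c)\}|$, and assemble a linear-programming relaxation whose constraints are: (i) trivial layer bounds $x_{a,b,c} \leq \binom{n}{a,b,c}$; (ii) Sperner-type inequalities between neighbouring layers obtained by noting that swapping a $1$ and a $3$ across a common $2$ is a combinatorial line; and (iii) the full geometric-line constraints, asserting that for every pair of endpoint layers (equal $b$) and each candidate midpoint layer (larger $b$), the total weight of occupied midpoints is capped by the number of free endpoint pairs. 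A well-chosen non-negative weighted sum (dual LP certificate) then yields the claimed upper bound. The main obstacle is finding a compact dual certificate for this LP when $n = 6$: the $19$-unit gap suggests that many of these constraints must be combined in a non-trivial way, and given the paper's advertised reliance on both human and computer-assisted arguments, I expect the $n = 6$ case to use computer verification (or at least a computer-aided search for the multipliers), while the $n = 5$ case, with only a $5$-unit gap, should admit a compact hand argument.
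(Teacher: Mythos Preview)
Your proposal has genuine gaps on both sides.

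\textbf{Lower bound for $n=6$.} The layer/sphere ansatz you describe is essentially Chv\'atal's coding-theoretic construction, and the paper records that this gives at most $344$ points in $[3]^6$ (see \eqref{cnchvatal} and the surrounding discussion). The paper's $353$-point example comes instead from a union of Gamma sets $\Gamma_{a,b,c}$ indexed by a carefully chosen $B\subset\Delta_{6,3}$ avoiding certain isosceles triangles, found by integer programming; this construction is \emph{not} a union of $b$-layers with a parity or distance refinement, and your ansatz will not reach $353$.

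\textbf{Upper bounds.} A linear program in the cell-counts $x_{a,b,c}$ is too weak. Already at $n=4$ the paper notes that the full battery of linear inequalities obtainable by propagating low-dimensional constraints gives only $|A|\le 44$; closing the gap to $43$ requires a non-LP structural argument about how points in $S_{4,4}$ interact with $S_{2,4}$ and $S_{3,4}$. For $n=5$ and $n=6$ the paper's argument is far more structural still: it rests on a computer enumeration of \emph{all} Pareto-optimal statistics vectors of $4$-dimensional Moser sets (Lemma~\ref{paretop-4}, several hundred vectors), uses these to classify the possible ``types'' of $4$-dimensional side slices of a putative extremal $A$ (Proposition~\ref{suv}, Lemma~\ref{defects}), and then proves rigidity lemmas about how slices of specific types can sit next to each other (e.g.\ the complete structural classification of $(6,12,18,4,0)$ sets in Lemma~\ref{sixt}, and the interaction Lemmas~\ref{pqs}--\ref{goodgood}). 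None of this information---``a slice with these exact statistics must be one of sixteen explicit sets up to symmetry''---can be expressed as a linear constraint on the $x_{a,b,c}$, so no dual certificate of the kind you envisage exists. Your instinct that $n=6$ needs computer assistance is correct, but the assistance enters through the $4$-dimensional Pareto table and the slice-classification lemmas, not through a single LP over $\Delta_{6,3}$.
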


This result is established in Sections \ref{moser-lower-sec}, \ref{moser-upper-sec}.  The arguments given here are computer-assisted; however, we have found alternate (but lengthier) computer-free proofs for the above claims with the the exception of the proof of $c'_{6,3}=353$, which requires one non-trivial computation (Lemma \ref{paretop-4}).  These alternate proofs are not given in this paper to save space, but can be found at \cite{polywiki}.

We establish a lower bound for this problem of $(2+o(1))\binom{n}{i}2^i\leq c'_{n,3}$, which is maximized for $i$ near $2n/3$.  This bound is around one-third better than the previous literature\cite{moser}, \cite{chvatal1}.  We also give methods to improve on this construction.

Earlier lower bounds were known.  Indeed, let $A(n,d)$ denote the size of the largest binary code of length $n$
 and minimal distance $d$. Then
\begin{equation}\label{cnchvatalintro}
c'_{n,3}\geq \max_k \left( \sum_{j=0}^k \binom{n}{j} A(n-j, k-j+1)\right).
\end{equation}
which, with $A(n,1)=2^n$ and $A(n,2)=2^{n-1}$,  implies in particular that
\begin{equation}\label{binom}
c'_{n,3} \geq \binom{n+1}{\lfloor \frac{2n+1}{3} \rfloor} 2^{\lfloor \frac{2n+1}{3} \rfloor - 1} 
\end{equation}
for $n \geq 2$.  This bound is not quite optimal; for instance, it gives a lower bound of $c'_{6,3} \geq 344$.  

\begin{remark} Let $c''_{n,3}$ be the size of the largest subset of ${\mathbb F}_3^n$ which contains no lines $x, x+r, x+2r$ with $x,r \in {\mathbb F}_3^n$ and $r \neq 0$, where ${\mathbb F}_3$ is the field of three elements.  Clearly one has $c''_{n,3} \leq c'_{n,3} \leq c_{n,3}$.  It is known that
$$ c''_{0,3}=1; c''_{1,3}=2; c''_{2,3}=4; c'_{3,3}=9; c'_{4,3}=20; c''_{5,3}=45; c''_{6,3} = 112;$$
see \cite{potenchin}.
\end{remark}

As mentioned earlier, the sharp bound on $c_{n,2}$ comes from Sperner's theorem.  It is known that Sperner's theorem can be refined to the \emph{Lubell-Yamamoto-Meshalkin (LYM) inequality}, which in our language asserts that
$$
\sum_{a_1,a_2 \geq 0; a_1+a_2 = n} \frac{|A \cap \Gamma_{a_1,a_2}|}{|\Gamma_{a_1,a_2}|} \leq 1
$$
for any line-free subset $A \subset [2]^n$, where the \emph{cell} $\Gamma_{a_1,\ldots,a_k} \subset [k]^n$ is the set of words in $[k]^n$ which contain exactly $a_i$ $i$'s for each $i=1,\ldots,k$.  It is natural to ask whether this inequality can be extended to higher $k$.  Let $\Delta_{n,k}$ denote the set of all tuples $(a_1,\ldots,a_k)$ of non-negative integers summing to $n$, define a \emph{simplex} to be a set of $k$ points in $\Delta_{n,k}$ of the form
$(a_1+r,a_2,\ldots,a_k), (a_1,a_2+r,\ldots,a_k),\ldots,(a_1,a_2,\ldots,a_k+r)$ for some $0 < r \leq n$ and $a_1,\ldots,a_k$ summing to $n-r$, and define a \emph{Fujimura set}\footnote{Fujimura actually proposed the related problem of finding the largest subset of $\Delta_{n,k}$ that contained no equilateral triangles; see \cite{fuji}.  Our results for Fujimura sets can be found at the page {\tt Fujimura's problem} at \cite{polywiki}.} to be a subset $B \subset \Delta_{n,k}$ which contains no simplices.  Observe that if $w$ is a combinatorial line in $[k]^n$, then
$$ w(1) \in \Gamma_{a_1+r,a_2,\ldots,a_k}, w(2) \in \Gamma_{a_1,a_2+r,\ldots,a_k}, \ldots, w(k) \in \Gamma_{a_1,a_2,\ldots,a_k+r}$$
for some simplex $(a_1+r,a_2,\ldots,a_k), (a_1,a_2+r,\ldots,a_k),\ldots,(a_1,a_2,\ldots,a_k+r)$.  Thus, if $B$ is a Fujimura set, then $A := \bigcup_{\vec a \in B} \Gamma_{\vec a}$ is line-free.  Note also that
$$
\sum_{\vec a \in \Delta_{n,k}} \frac{|A \cap \Gamma_{\vec a}|}{|\Gamma_{\vec a}|} = |B|.
$$
This motivates a ``hyper-optimistic'' conjecture:

\begin{figure}[tb]
\centerline{\includegraphics[height=6cm,width=8cm]{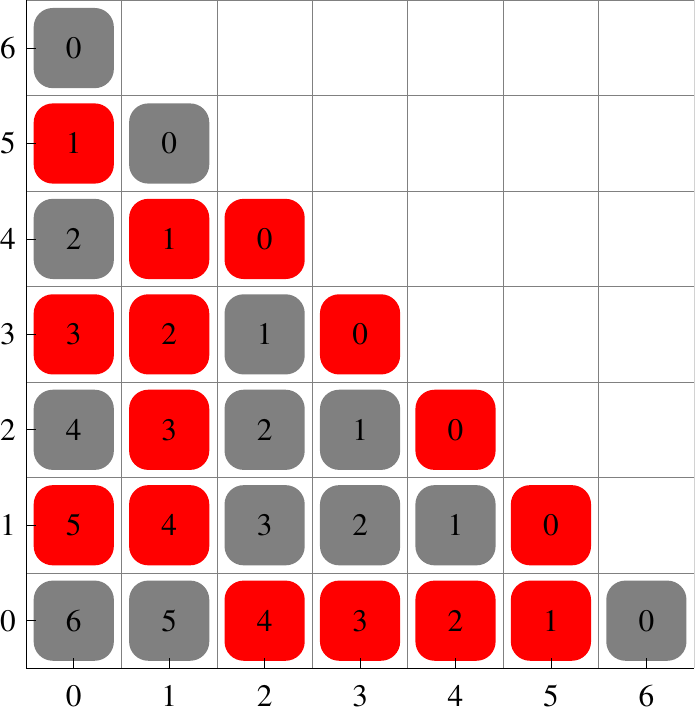}}
\caption{A Fujimura set in $\Delta_{6,3}$, displayed in ``rectangular'' coordinates. The point $(a,b,c)$ is represented by a square at $(a,b)$ labeled with $c$. The Fujimura set is shown in red; its complement in $\Delta_{6,3}$ is shown in gray.}
\label{fig-pic}
\end{figure}

\begin{figure}[tb]
\centerline{\includegraphics[height=6cm,width=6cm]{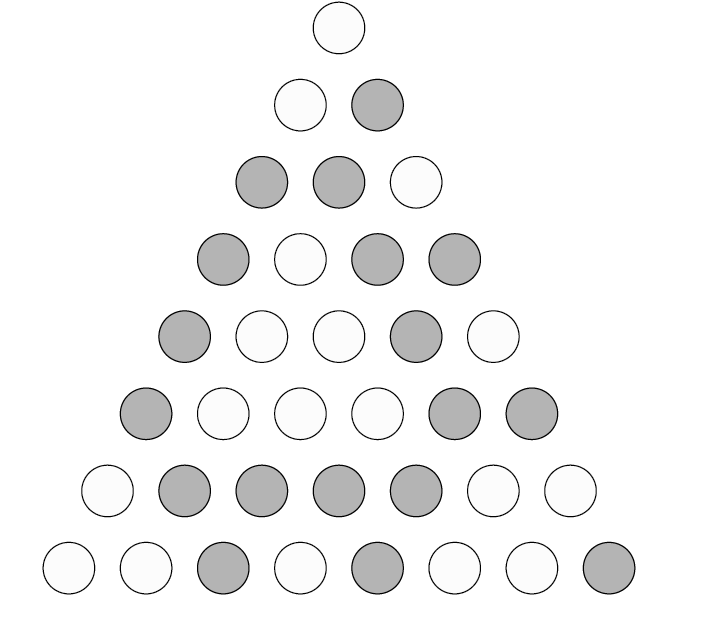}}
\caption{A Fujimura set in $\Delta_{7,3}$, expressed in ``triangular'' coordinates.}
\label{fig-pic2}
\end{figure}

\begin{conjecture}\label{hoc}  For any $k \geq 1$ and $n \geq 0$, and any line-free subset $A$ of $[k]^n$, one has
$$
\sum_{\vec a \in \Delta_{n,k}} \frac{|A \cap \Gamma_{\vec a}|}{|\Gamma_{\vec a}|} \leq c^\mu_{n,k},$$
where $c^\mu_{n,k}$ is the maximal size of a Fujimura set in $\Delta_{n,k}$.
\end{conjecture}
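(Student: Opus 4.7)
The plan is to imitate the proof of the classical LYM inequality, which handles the $k=2$ case, via two complementary routes, and to identify precisely where each breaks down for $k \geq 3$.

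The first route is \emph{symmetrization}. Write $L(A) := \sum_{\vec a \in \Delta_{n,k}} |A \cap \Gamma_{\vec a}|/|\Gamma_{\vec a}|$ for the quantity one wants to bound. Note that $L(A)$ is invariant under the natural $S_n$-action permuting coordinates of $[k]^n$, and that this action sends line-free sets to line-free sets. If one could show that any line-free $A$ may be replaced by an $S_n$-invariant line-free $A'$ with $L(A') \geq L(A)$, then $A' = \bigcup_{\vec a \in B} \Gamma_{\vec a}$ for some $B \subset \Delta_{n,k}$; by the computation preceding the conjecture, $B$ is then a Fujimura set, and the desired inequality $L(A) \leq L(A') = |B| \leq c^\mu_{n,k}$ follows. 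Concretely, one would attempt to define, for each transposition $(i,j) \in S_n$, a compression operation $A \mapsto A_{ij}$ that preserves $|A \cap \Gamma_{\vec a}|$ cell by cell, preserves line-freeness, and yields a set invariant under $(i,j)$; iterating would then produce the required $A'$.

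The second route is a \emph{random chain} argument. In the LYM proof one samples a uniformly random permutation $\sigma$ of $[n]$ and, for each cell, lets $w_{\vec a,\sigma}$ be the canonical representative (the letters of $\vec a$ arranged in the order prescribed by $\sigma$). For $k=2$, one checks directly that the canonical words at the two vertices $(a_1+r,a_2)$ and $(a_1,a_2+r)$ of any simplex always form a combinatorial line, so $B_\sigma := \{\vec a : w_{\vec a,\sigma} \in A\}$ is automatically a Fujimura set, and $\E_\sigma |B_\sigma| = L(A)$ yields the inequality. For general $k$, one would try to find, for each random seed, an assignment $\vec a \mapsto w_{\vec a} \in \Gamma_{\vec a}$ such that the $k$ words at the vertices of any simplex always form a combinatorial line in $[k]^n$; whenever such a coupling exists, $\{\vec a : w_{\vec a} \in A\}$ is a Fujimura set of expected size $L(A)$, proving the conjecture.

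The main obstacle is the failure of both routes at the same place for $k \geq 3$. For any fixed ordering of $[n]$, the canonical words at the vertices $(a_1+r,a_2,a_3)$, $(a_1,a_2+r,a_3)$, $(a_1,a_2,a_3+r)$ of a simplex in $\Delta_{n,3}$ disagree pairwise on positions $a_1+1,\ldots,a_1+r$ (the second and third put a $2$ there while the first puts a $1$), so they do \emph{not} form a combinatorial line; the naive coupling in Route~2 is thus broken. Correspondingly, the obvious coordinate-swap compressions of Route~1 can destroy a combinatorial line whose wildcard lies in one of the swapped coordinates, so they do not preserve line-freeness cell by cell. Making the plan succeed therefore seems to require either a correlated coupling of canonical representatives that also permutes the alphabet $[k]$ and averages in a carefully matched way over simplices, or a genuinely new compression preserving combinatorial line-freeness; designing either is the heart of the conjecture. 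In the meantime, the conjecture can be (and in the $k=3$, small $n$ regime has been) verified by direct enumeration or integer programming, providing evidence that a proof along these lines should exist.
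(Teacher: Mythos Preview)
The statement you are attempting to prove is a \emph{conjecture}, not a theorem, and the paper contains no proof of it; on the contrary, immediately after stating it the paper gives an explicit counterexample for $k=4$, $n=2$ (a twelve-point line-free set $A\subset[4]^2$ with $L(A)=8>7=c^\mu_{2,4}$). So your closing sentence, that computational evidence suggests ``a proof along these lines should exist'', is simply wrong for general $k$: the obstructions you diagnose in Routes~1 and~2 are not mere technical inconveniences to be repaired by a cleverer coupling or compression, but symptoms of the actual falsity of the inequality once $k\geq 4$.

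That said, your analysis of \emph{why} the two LYM-style arguments break for $k\geq 3$ is accurate and informative, and it is consistent with the paper's remark that the $k=3$ case remains open. If you wish to salvage something, the honest target is the restricted conjecture for $k=3$ only; any argument you devise must exploit something specific to three letters, since a method that worked uniformly in $k$ would contradict the $k=4$ counterexample.
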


One can show that this conjecture for a fixed value of $k$ would imply Theorem \ref{dhj} for the same value of $k$, in much the same way that the LYM inequality is known to imply Sperner's theorem.  The LYM inequality asserts that Conjecture \ref{hoc} is true for $k \leq 2$.  As far as we know, this conjecture could hold in $k=3$.  However, we found a simple counterexample for $k=4$ and $n=2$, given by the line-free set
$$A := \{(1, 1), (1, 2), (1, 3), (2, 1), (2, 3), (2, 4), (3, 2), (3, 3), (3,4), (4, 1), (4, 2), (4, 4)\}$$
together with the computation that $c^\mu_{4,2}=7$.  It is in fact likely that this conjecture fails for all higher $k$ also.

\subsection{Notation}\label{notation-sec}

There are several subsets of $[k]^n$ which will be useful in our analysis.  We have already introduced combinatorial lines, geometric lines, and cells.  One can generalise the notion of a combinatorial line to that of a \emph{combinatorial subspace} in $[k]^n$ of dimension $d$, which is indexed by a word $w$ in $([k] \cup \{x_1,\ldots,x_d\})^n$ containing at least one of each wildcard $x_1,\ldots,x_d$, and which forms the set $\{ w(i_1,\ldots,i_d): i_1,\ldots,i_d \in [k]\}$, where $w(i_1,\ldots,i_d) \in [k]^d$ is the word formed by replacing $x_1,\ldots,x_d$ with $i_1,\ldots,i_d$ respectively.  Thus for instance, in $[3]^3$, we have the two-dimensional combinatorial subspace $xxy = \{111,112,113,221,222,223,331,332,333\}$.  We similarly have the notion of a \emph{geometric subspace} in $[k]^n$ of dimension $d$, which is defined similarly but with $d$ wildcards $x_1,\ldots,x_d,\overline{x_1},\ldots,\overline{x_d}$, with at least one of either $x_i$ or $\overline{x_i}$ appearing in the word $w$ for each $1 \leq i \leq d$, and the space taking the form $\{ w(i_1,\ldots,i_d,k+1-i_1,\ldots,k+1-i_d): i_1,\ldots,i_d \in [k] \}$.  Thus for instance $[3]^3$ contains the two-dimensional geometric subspace $x\overline{x}y = \{ 131, 132, 133, 221, 222, 223, 311, 312, 313\}$.

An important class of combinatorial subspaces in $[k]^n$ will be the \emph{slices} consisting of $n-1$ distinct wildcards and one fixed coordinate.  We will denote the distinct wildcards here by asterisks, thus for instance in $[3]^3$ we have $2** = \{ 211, 212, 213, 221, 222, 223, 231, 232, 233\}$.  Two slices are \emph{parallel} if their fixed coordinate are in the same position, thus for instance $1**$ and $2**$ are parallel, and one can subdivide $[k]^n$ into $k$ parallel slices, each of which is isomorphic to $[k]^{n-1}$.  In the analysis of Moser slices with $k=3$, we will make a distinction between \emph{centre slices}, whose fixed coordinate is equal to $2$, and \emph{side slices}, in which the fixed coordinate is either $1$ or $3$, thus $[3]^n$ can be partitioned into one centre slice and two side slices.

Another important set in the study of $k=3$ Moser sets are the \emph{spheres} $S_{i,n} \subset [3]^n$, defined as those words in $[3]^n$ with exactly $n-i$ $2$'s (and hence $i$ letters that are $1$ or $3$).  Thus for instance $S_{1,3} = \{ 122, 322, 212, 232, 221, 223\}$.  Observe that $[3]^n = \bigcup_{i=0}^n S_{i,n}$, and each $S_{i,n}$ has cardinality $|S_{i,n}| = \binom{n}{i} 2^{i}$.

It is also convenient to subdivide each sphere $S_{i,n}$ into two components $S_{i,n} = S_{i,n}^o \cup S_{i,n}^e$, where $S_{i,n}^o$ are the words in $S_{i,n}$ with an odd number of $1$'s, and $S_{i,n}^e$ are the words with an even number of $1$'s. Thus for instance $S_{1,3}^o = \{122,212,221\}$ and $S_{1,3}^e = \{322,232,223\}$.  Observe that for $i>0$, $S_{i,n}^o$ and $S_{i,n}^e$ both have cardinality $\binom{n}{i} 2^{i-1}$.

The \emph{Hamming distance} between two words $w,w'$ is the number of coordinates in which $w, w'$ differ, e.g. the Hamming distance between $123$ and $321$ is two.  Note that $S_{i,n}$ is nothing more than the set of words whose Hamming distance from $2\ldots2$ is $i$, which justifies the terminology ``sphere''.

In the density Hales-Jewett problem, there are two types of symmetries on $[k]^n$ which map combinatorial lines to combinatorial lines (and hence line-free sets to line-free sets).  The first is a permutation of the alphabet $[k]$; the second is a permutation of the $n$ coordinates.  Together, this gives a symmetry group of order $k!n!$ on the cube $[k]^n$, which we refer to as the \emph{combinatorial symmetry group} of the cube $[k]^n$.  Two sets which are related by an element of this symmetry group will be called (combinatorially) \emph{equivalent}, thus for instance any two slices are combinatorially equivalent.

For the analysis of Moser sets in $[k]^n$, the symmetries are a bit different.  One can still permute the $n$ coordinates, but one is no longer free to permute the alphabet $[k]$.  Instead, one can \emph{reflect} an individual coordinate, for instance sending each word $x_1 \ldots x_n$ to its reflection $x_1 \ldots x_{i-1} (k+1-x_i) x_{i+1} \ldots x_n$.  Together, this gives a symmetry group of order $2^n n!$ on the cube $[k]^n$, which we refer to as the \emph{geometric symmetry group} of the cube $[k]^n$; this group maps geometric lines to geometric lines, and thus maps Moser sets to Moser sets.  Two Moser sets which are related by an element of this symmetry group will be called (geometrically) \emph{equivalent}.  For instance, a sphere $S_{i,n}$ is equivalent only to itself, and $S_{i,n}^o$, $S_{i,n}^e$ are equivalent only to each other.

\subsection{About this project}

This paper is part of the \emph{Polymath project}, which was launched by Timothy Gowers in February 2009 as an experiment to see if research mathematics could be conducted by a massive online collaboration.  The first project in this series, \emph{Polymath1}, was focused on understanding the density Hales-Jewett numbers $c_{n,k}$, and was split up into two sub-projects, namely an (ultimately successful) attack on the density Hales-Jewett theorem $c_{n,k} = o(k^n)$ (resulting in the paper \cite{poly}), and a collaborative project on computing $c_{n,k}$ and related quantities (such as $c'_{n,k}$) for various small values of $n$ and $k$.  This project (which was administered by Terence Tao) resulted in this current paper.  

Being such a collaborative project, many independent aspects of the problem were studied, with varying degrees of success.  For reasons of space (and also due to the partial nature of some of the results), this paper does not encompass the entire collection of observations and achievements made during the research phase of the project (which lasted for approximately three months).  In particular, alternate proofs of some of the results here have been omitted, as well as some auxiliary results on related numbers, such as coloring Hales-Jewett numbers.  However, these results can be accessed from the web site of this project at \cite{polywiki}.  We are indebted to Michael Nielsen for hosting this web site, which performed a crucial role in the project. A list of contributors to the project (and the grants that supported these individuals) can also be found at this site.

\section{Lower bounds for the density Hales-Jewett problem}\label{dhj-lower-sec}

The purpose of this section is to establish various lower bounds for $c_{n,3}$, in particular establishing Theorem \ref{dhj-lower} and the lower bound component of Theorem \ref{dhj-upper}.

As observed in the introduction, if $B \subset \Delta_{n,3}$ is a Fujimura set (i.e. a subset of $\Delta_{n,3} = \{ (a,b,c) \in \N^3: a+b+c=n\}$ which contains no upward equilateral triangles $(a+r,b,c), (a,b+r,c), (a,b,c+r)$), then the set $A_B := \bigcup_{\vec a \in B} \Gamma_{a,b,c}$ is a line-free subset of $[3]^n$, which gives the lower bound
\begin{equation}\label{cn3}
 c_{n,3} \geq |A_B| = \sum_{(a,b,c) \in B} \frac{n!}{a! b! c!}.
\end{equation}
All of the lower bounds for $c_{n,3}$ in this paper will be constructed via this device.  (Indeed, one may conjecture that for every $n$ there exists a Fujimura set $B$ for which \eqref{cn3} is attained with equality; we know of no counterexamples to this conjecture.)

In order to use \eqref{cn3}, one of course needs to build Fujimura sets $B$ which are ``large'' in the sense that the right-hand side of \eqref{cn3} is large.  A fruitful starting point for this goal is the sets 
$$B_{j,n} := \{ (a,b,c) \in \Delta_{n,3}: a + 2b \neq j \hbox{ mod } 3 \}$$
for $j=0,1,2$.  Observe that in order for a triangle $(a+r,b,c), (a,b+r,c), (a,b,c+r)$ to lie in $B_{j,n}$, the length $r$ of the triangle must be a multiple of $3$.  This already makes $B_{j,n}$ a Fujimura set for $n < 3$	(and $B_{0,n}$ a Fujimura set for $n = 3$).

When $n$ is not a multiple of $3$, the $B_{j,n}$ are all rotations of each other and give equivalent sets (of size $2 \times 3^{n-1}$).  When $n$ is a multiple of $3$, the sets $B_{1,n}$ and $B_{2,n}$ are reflections of each other, but $B_{0,n}$ is not equivalent to the other two sets (in particular, it omits all three corners of $\Delta_{n,3}$); the associated set $A_{B_{0,n}}$ is slightly larger than $A_{B_{1,n}}$ and $A_{B_{2,n}}$ and thus is slightly better for constructing line-free sets.

As mentioned already, $B_{0,n}$ is a Fujimura set for $n \leq 3$, and hence $A_{B_{0,n}}$ is line-free for $n \leq 3$.  Applying \eqref{cn3} one obtains the lower bounds
$$ c_{0,3} \geq 1; c_{1,3} \geq 2; c_{2,3} \geq 6; c_{3,3} \geq 18.$$

For $n>3$, $B_{0,n}$ contains some triangles $(a+r,b,c), (a,b+r,c), (a,b,c+r)$ and so is not a Fujimura set, but one can remove points from this set to recover the Fujimura property.  For instance, for $n \leq 6$, the only triangles in $B_{0,n}$ have side length $r=3$.  One can ``delete'' these triangles by removing one vertex from each; in order to optimise the bound \eqref{cn3} it is preferable to delete vertices near the corners of $\Delta_{n,3}$ rather than near the centre.  These considerations lead to the Fujimura sets
\begin{align*}
B_{0,4} &\backslash \{ (0,0,4), (0,4,0), (4,0,0) \}\\
B_{0,5} &\backslash \{ (0,4,1), (0,5,0), (4,0,1), (5,0,0) \}\\
B_{0,6} &\backslash \{ (0,1,5), (0,5,1), (1,0,5), (0,1,5), (1,5,0), (5,1,0) \}
\end{align*}
which by \eqref{cn3} gives the lower bounds
$$ c_{4,3} \geq 52; c_{5,3} \geq 150; c_{6,3} \geq 450.$$
Thus we have established all the lower bounds needed for Theorem \ref{dhj-upper}.

One can of course continue this process by hand, for instance the set
$$ B_{0,7} \backslash \{(0,1,6),(1,0,6),(0,5,2),(5,0,2),(1,5,1),(5,1,1),(1,6,0),(6,1,0) \}$$
gives the lower bound $c_{7,3} \geq 1302$, which we tentatively conjecture to be the correct bound. 

A simplification was found when $n$ is a multiple of $3$.  Observe that for $n=6$, the sets excluded from $B_{0,6}$ are all permutations of $(0,1,5)$.  So the remaining sets are all the permutations of $(1,2,3)$ and $(0,2,4)$.  In the same way, sets for $n=9$, $12$ and $15$ can be described as:
\begin{itemize}
\item $n=9$: $(2,3,4),(1,3,5),(0,4,5)$ and permutations;
\item $n=12$: $(3,4,5),(2,4,6),(1,5,6),(0,2,10),(0,5,7)$ and permutations;
\item $n=15$: $(4,5,6),(3,5,7),(2,6,7),(1,3,11),(1,6,8),(0,4,11),(0,7,8)$ and permutations.
\end{itemize}

When $n$ is not a multiple of $3$, say $n=3m-1$ or $n=3m-2$, one first finds a solution for $n=3m$.  Then for $n=3m-1$, one restricts the first digit of the $3m$ sequence to equal $1$.  This leaves exactly one-third as many points for $3m-1$ as for $3m$.  For $n=3m-1$, one restricts the first two digits of the $3m$ sequence to be $12$.  This leaves roughly one-ninth as many points for $3m-2$ as for $3m$.

An integer program\footnote{Details of the integer programming used in this paper can be found at the page {\tt Integer.tex} at \cite{polywiki}.} was solved to obtain the maximum lower bound one could establish from \eqref{cn3}.  The results for $1 \leq n \leq 20$ are displayed in Figure \ref{nlow}.
More complete data, including the list of optimisers, can be found at \cite{markstrom}.

\begin{figure}[tb]
\centerline{
\begin{tabular}{|ll|ll|}
\hline
$n$ & lower bound & $n$ & lower bound \\
\hline
1 & 2 &11&	96338\\
2 & 6 & 12&	287892\\
3 &	18 & 13&	854139\\
4 &	52 & 14&	2537821\\
5 &	150& 15&	7528835\\
6 &	450& 16&	22517082\\
7 &	1302& 17&	66944301\\
8 &	3780&18&	198629224\\
9 &	11340&19&	593911730\\
10&	32864& 20&	1766894722\\
\hline
\end{tabular}}
\caption{Lower bounds for $c_n$ obtained by the $A_B$ construction.}
\label{nlow}
\end{figure}

For medium values of $n$, in particular for integers $21 \leq n \leq 999$ that are a multiple of $3$, $n=3m$, the best general lower bound for $c_{n,3}$ was found by applying \eqref{cn3} to the following Fujimura set construction.
It is convenient to write $[a,b,c]$ for the point $(m+a,m+b,m+c)$, together with its permutations, with the convention that $[a,b,c]$ is empty if these points do not lie in $\Delta_{n,3}$.  Then a Fujimura set can be constructed by taking the following groups of points:

\begin{enumerate}
\item The thirteen groups of points
$$[-7,-3,+10], [-7, 0,+7], [-7,+3,+4], [-6,-4,+10], [-6,-1,+7], [-6,+2,+4]$$
$$[-5,-1,+6],[-5,+2,+3],[-4,-2,+6], [-4,+1,+3], [-3,+1,+2], [-2,0,+2], [-1,0,+1];$$
\item The four families of groups
$$[-8-y-2x,-6+y-2x,14+4x], [-8-y-2x,-3+y-2x,11+4x], $$
$$[-8-y-2x,x+y,8+x], [-8-2x,3+x,5+x]$$
for $x \geq 0$ and $y=0,1$.
\end{enumerate}

Numerical computation shows that this construction gives a line-free set in $[3]^n$ of density approximately $2.7 \sqrt{\frac{\log n}{n}}$ for $n \leq 1000$; for instance, when $n=99$, it gives a line-free set of density at least $1/3$.  Some additional constructions of this type can be found at the page {\tt Upper and lower bounds} at \cite{polywiki}.

However, the bounds in Theorem \ref{dhj-lower}, which we now prove, are asymptotically superior to these constructions.

\begin{proof}[Proof of Theorem \ref {dhj-lower}] 
Let $M$ be the circulant matrix with first row $(1,2,\ldots,k-1)$, second row $(k-1,1,2,\dots,k-2)$, and so on. Note that $M$ has nonzero determinant by well-known properties\footnote{For instance, if we let $A_i$ denote the $i^{th}$ row, we see that $(A_1-A_2)+(A_{i+1}-A_i)$ is of the form $(0,\ldots,0,-k+1,0,\ldots,0,k-1)$, and so the row space spans all the vectors whose coordinates sum to zero; but the first row has a non-zero coordinate sum, so the rows in fact span the whole space.}  of circulant matrices, see e.g. \cite[Theorem 3]{kra}.

Let $S$ be a subset of the interval $[-\sqrt {n}/2, \sqrt {n}/2)$ that contains no nonconstant arithmetic progressions of length $k$, and let $B\subset\Delta_{n, k}$ be the set 
    \[ B := \{(n-\sum_{i=1}^{k-1} a_i ,a_1,a_2,\dots, a_{k-1}) : 
            (a_1,\dots,a_{k-1})= c + \det(M) M^{-1} s , s\in S^{k-1}\},\]
where $c$ is the $k-1$-dimensional vector, all of whose entries are equal to $\lfloor n/k \rfloor$. 
The map $(m,a_1,\dots,a_{k-1}) \mapsto M (a_1,\dots,a_{k-1})$ takes simplices in $\Delta_{n,k}$ to nonconstant arithmetic progressions in ${\mathbb Z}^{k-1}$, and takes $B$ to $\{M c +\det(M) \, s \colon s \in S^{k-1}\}$, which is a set containing no nonconstant arithmetic progressions. Thus, $B$ is a Fujimura set and so does not contain any combinatorial lines. 

If all of $a_1,\ldots,a_k$ are within $C_1\sqrt{n}$ of $n/k$, then $|\Gamma_{\vec{a}}| \geq C k^n/n^{(k-1)/2}$ (where $C$ depends on $C_1$) by the central limit theorem. By our choice of $S$ and applying~\eqref{cn3} (or more precisely, the obvious generalisation of \eqref{cn3} to other values of $k$), we obtain 
     $$ c_ {n, k}\geq C k^n/n^{(k-1)/2} |S|^{k-1} = C k^n \left( \frac{|S|}{\sqrt{n}} \right)^{k-1}. $$
One can take $S$ to have cardinality $r_ k (\sqrt {n}) $, which from the results of O'Bryant~\cite {obryant} satisfies (for all sufficiently large $n$, some $C>0$, and $\ell$ the largest integer satisfying $k> 2^{\ell-1}$) 
     $$ \frac{r_k (\sqrt{n})}{\sqrt{n}} \geq C  (\log n)^{1/(2\ell)}\exp_2 (-\ell 2^{(\ell-1)/2-1/\ell} \sqrt[\ell]{\log_2 n}),$$
which completes the proof.
\end {proof}

\section{Upper bounds for the $k=3$ density Hales-Jewett problem}\label{dhj-upper-sec}

To finish the proof of Theorem \ref{dhj-upper} we need to supply the indicated upper bounds for $c_{n,3}$ for $n=0,\ldots,6$.  

It is clear that $c_{0,3} = 1$ and $c_{1,3} = 2$.  By subdividing a line-free set into three parallel slices we obtain the bound
$$ c_{n+1,3} \leq 3 c_{n,3}$$
for all $n$.  This is already enough to get the correct upper bounds $c_{2,3} \leq 6$ and $c_{3,3} \leq 18$, and also allows us to deduce the upper bound $c_{6,3} \leq 450$ from $c_{5,3} \leq 150$.  So the remaining tasks are to establish the upper bounds 
\begin{equation}\label{c43}
c_{4,3} \leq 52
\end{equation}
and
\begin{equation}\label{c53}
c_{5,3} \leq 150.
\end{equation}

In order to establish \eqref{c53}, we will rely on \eqref{c43}, together with a classification of those line-free sets in $[3]^4$ of size close to the maximal number $52$.  Similarly, to establish \eqref{c43}, we will need the bound $c_{3,3} \leq 18$, together with a classification of those line-sets in $[3]^3$ of size close to the maximal number $18$.  Finally, to achieve the latter aim one needs to classify the line-free subsets of $[3]^2$ with exactly $c_{2,3}=6$ elements.

\subsection{$n=2$}

We begin with the $n=2$ theory.

\begin{lemma}[$n=2$ extremals]\label{2d-ext}  There are exactly four line-free subsets of $[3]^2$ of cardinality $6$:
\begin{itemize}
\item The set $x := A_{B_{2,2}} = \{12, 13, 21, 22, 31, 33\}$;
\item The set $y := A_{B_{2,1}} = \{11, 12, 21, 23, 32, 33\}$;
\item The set $z := A_{B_{2,0}} = \{11, 13, 22, 23, 31, 32\}$;
\item The set $w := \{12, 13, 21, 23, 31, 32\}$. 
\end{itemize}
\end{lemma}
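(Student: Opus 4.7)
The plan is to work with complements: a line-free subset $A \subset [3]^2$ of size $6$ is the complement of a $3$-element set $T := [3]^2 \setminus A$, and $A$ is line-free if and only if $T$ meets every combinatorial line. So I would first enumerate the combinatorial lines of $[3]^2$ and then classify the $3$-element ``transversals'' $T$ that hit all of them.

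Concretely, $[3]^2$ has $(3+1)^2-3^2 = 7$ combinatorial lines: the three rows $1*, 2*, 3*$, the three columns $*1, *2, *3$, and the diagonal $xx=\{11,22,33\}$. The six row/column lines partition $[3]^2$ into its three rows (and separately into its three columns), so a $3$-element set $T$ hitting all of them must contain exactly one element of each row and exactly one element of each column; that is, $T$ must be a permutation matrix. There are $3! = 6$ such sets, corresponding to the six permutations of $\{1,2,3\}$.

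The next step is to impose the remaining constraint that $T$ also meets the diagonal $\{11,22,33\}$. Of the six permutation matrices, the three involutions $\{11,22,33\}$, $\{11,23,32\}$, $\{22,13,31\}$, $\{33,12,21\}$ (i.e., the identity and the three transpositions) each contain at least one fixed point and hence hit the diagonal, while the two $3$-cycles $\{12,23,31\}$ and $\{13,21,32\}$ contain no fixed point and therefore miss the diagonal. This leaves exactly four admissible $T$'s.

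The final step is just bookkeeping: take complements of the four admissible $T$'s and match them with the four sets in the statement, verifying that $[3]^2 \setminus \{11,22,33\} = w$, $[3]^2 \setminus \{11,23,32\} = x$, $[3]^2 \setminus \{13,22,31\} = y$, and $[3]^2 \setminus \{12,21,33\} = z$. There is no real obstacle here; the only thing to double-check is that the six axis-aligned lines are the \emph{only} combinatorial lines besides the main diagonal (in particular, the anti-diagonal $\{13,22,31\}$ is \emph{not} a combinatorial line, which is why the two $3$-cycle transversals fail while in the corresponding Moser/geometric-line problem they would survive).
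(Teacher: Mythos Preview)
Your proof is correct and follows essentially the same idea as the paper's proof, which simply notes that a $6$-element line-free set must have exactly two points in each row and column and then appeals to a brute-force search. Your complement formulation (the missing $3$-point set must be a permutation matrix that also meets the diagonal, hence corresponds to a permutation with a fixed point) is a clean and explicit way to execute that brute-force step; one minor slip is the phrase ``the three involutions'' followed by four listed sets---you of course mean the identity together with the three transpositions, giving four.
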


\begin{proof}  A line-free subset of $[3]^2$ must have exactly two elements in every row and column.  The claim then follows by brute force search.
\end{proof}

\subsection{$n=3$}

Now we turn to the $n=3$ theory.  We can slice $[3]^3$ as the union of three slices $1**$, $2**$, $3**$, each of which are identified with $[3]^2$ in the obvious manner.  Thus every subset $A$ in $[3]^3$ can be viewed as three subsets $A_1, A_2, A_3$ of $[3]^2$ stacked together; if $A$ is line-free then $A_1,A_2,A_3$ are necessarily line-free, but the converse is not true.  We write $A = A_1 A_2 A_3$, thus for instance $xyz$ is the set
$$ xyz = \{112,113,121,122,131,133\} \cup \{211,212,221,223,232,233\} \cup \{311,313,322,323,331,332\}.$$
Observe that
$$ A_{B_{0,3}} = xyz; \quad A_{B_{1,3}} = yzx; \quad A_{B_{2,3}} = zxy.$$

\begin{lemma}[$n=3$ extremals]\label{Lemma1} The only $18$-element line-free subset of $[3]^3$ is $xyz$.  The only $17$-element line-free subsets of $[3]^3$ are formed by removing a point from $xyz$, or by removing either $111$, $222$, or $333$ from $yzx$ or $zxy$.
\end{lemma}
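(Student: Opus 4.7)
The strategy is to slice $[3]^3$ along the first coordinate into three slices $1\ast\ast$, $2\ast\ast$, $3\ast\ast$, each isomorphic to $[3]^2$, and write $A = A_1 A_2 A_3$ accordingly. Each $A_i$ must itself be line-free in $[3]^2$, and the cross-slice combinatorial lines (those with a wildcard in the first coordinate) must be avoided. These come in two kinds: the nine \emph{vertical} lines $xab$, which pick out the same within-slice point $(a,b)$ from each of the three slices, and the seven \emph{diagonal} lines $xxa$, $xax$, $xxx$, which pick out three different within-slice points, one per slice.

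For the 18-element case, the bound $c_{2,3}=6$ forces $|A_i|=6$ for all $i$, so each $A_i \in \{x,y,z,w\}$ by Lemma \ref{2d-ext}. The vertical constraint requires that the size-3 complements $\overline{A_1}$, $\overline{A_2}$, $\overline{A_3}$ cover, and therefore partition, the nine points of $[3]^2$. A direct check shows that $\overline{w}=\{11,22,33\}$ intersects each of $\overline{x}=\{11,23,32\}$, $\overline{y}=\{13,22,31\}$, $\overline{z}=\{12,21,33\}$ nontrivially (at $11$, $22$, $33$ respectively), so $\overline{w}$ cannot appear in any such partition. Since $(\overline{x},\overline{y},\overline{z})$ themselves already partition $[3]^2$, the only possibility is that $(\overline{A_1},\overline{A_2},\overline{A_3})$ is a permutation of $(\overline{x},\overline{y},\overline{z})$. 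This leaves the six orderings of $(x,y,z)$ as candidates; for each I would evaluate the seven diagonal lines directly. One finds that the identity ordering $xyz$ avoids all diagonals, the two cyclic shifts admit exactly one diagonal line (the long diagonal $xxx=\{111,222,333\}$), and each of the three transpositions admits four diagonals. Only $xyz$ survives.

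For the 17-element case, the slice sizes must be a permutation of $(6,6,5)$, giving three sub-cases depending on which slice is small. In each sub-case, the two size-6 slices lie in $\{x,y,z,w\}$, and the same vertical/covering argument excludes $w$ (in its presence the size-5 slice would be forced to be the 5-point complement of the intersection of the two size-6 slices, which always contains the main diagonal $\{11,22,33\}$) and forces the size-5 slice to be a subset of the unique 6-point complement of this intersection, which is itself one of $x$, $y$, $z$. For each of the six orderings of the two size-6 slices, I would then ask whether a single-point deletion from the 5-slice can destroy all the diagonal lines present in the corresponding ``full'' configuration. By the analysis of the previous paragraph, in three orderings (extending to $xyz$) there are no diagonals, so any of the six points in the 5-slice can be deleted; in two orderings (extending to the cyclic permutations which admit only $xxx$) the unique diagonal is killed precisely when the deleted point is whichever of $111$, $222$, $333$ lies in the 5-slice; and in the three transposition orderings the four diagonals have no common point, so no single-point deletion works. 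Summing the valid configurations across the three sub-cases yields exactly the 18 deletions from $xyz$, together with the three deletions of $111$, $222$, $333$ from each of $yzx$ and $zxy$ claimed in the lemma.

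The main obstacle is the explicit, slice-by-slice verification of which diagonal lines survive in each of the six orderings of $(x,y,z)$. Although entirely mechanical, this step requires careful bookkeeping of within-slice coordinates, and it is here that the asymmetry between ``cyclic'' orderings (one diagonal) and ``transposition'' orderings (four diagonals) emerges; once this table is in hand, both claims of the lemma follow immediately.
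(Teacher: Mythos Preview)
Your argument is correct and follows the same slicing approach as the paper: decompose into three $[3]^2$ slices, invoke Lemma~\ref{2d-ext} for the size-$6$ slices, use the vertical lines to force the slice types to be a permutation of $(x,y,z)$, and then count the surviving diagonal lines to single out the admissible orderings. The paper's proof is terser---it simply asserts that ``any other combination has too many lines that need to be removed'' and that $yzx$, $zxy$ retain only the diagonal $\{111,222,333\}$---whereas you spell out the $0/1/4$ diagonal counts explicitly; but the logical skeleton is identical. One small point worth tightening: in the $17$-element case you should state explicitly that the two size-$6$ slices cannot coincide (their common complement would then have only three points, too few for the $5$-slice), since in that case your covering argument is not literally a partition argument; this is the analogue of the paper's ``if two of the slices are identical'' clause.
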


\begin{proof} We prove the second claim. As $17 = 6 + 6 + 5$, and $c_{2,3} = 6$, at least two of the slices of a $17$-element line-free set must be from $x$, $y$, $z$, $w$, with the third slice having $5$ points. If two of the slices are identical, the last slice must lie in the complement and thus has at most $3$ points, a contradiction. If one of the slices is a $w$, then the $5$-point slice consists of the complement of the other two slices and thus contains a diagonal, contradiction. By symmetry we may now assume that two of the slices are $x$ and $y$, which force the last slice to be $z$ with one point removed. Now one sees that the slices must be in the order $xyz$, $yzx$, or $zxy$, because any other combination has too many lines that need to be removed. The sets $yzx$, $zxy$ contain the diagonal $\{111,222,333\}$ and so one additional point needs to be removed.

The first claim follows by a similar argument to the second. 
\end{proof}

\subsection{$n=4$}

Now we turn to the $n=4$ theory.  

\begin{lemma}  $c_{4,3} \leq 52$.
\end{lemma}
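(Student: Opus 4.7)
The plan is to reduce to the $n=3$ case by slicing $[3]^4$ into three parallel slices and combining the $c_{3,3}=18$ bound with the uniqueness statement of Lemma \ref{Lemma1}. I would begin by slicing via the first coordinate, writing $A = A_1 A_2 A_3$ with each $A_i \subset [3]^3$, and observing that any combinatorial line in $[3]^4$ whose wildcards avoid coordinate $1$ stays within a single slice. Hence each $A_i$ is line-free in $[3]^3$, which by Lemma \ref{Lemma1} gives $|A_i| \leq 18$ with equality only if $A_i = xyz$.

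Next I would extract the ``through-slice'' constraint. For every $s \in [3]^3$, the triple $\{(1,s),(2,s),(3,s)\}$ is a combinatorial line of $[3]^4$ (wildcard in the first coordinate only), so $A$ cannot contain all three. This translates exactly to $A_1 \cap A_2 \cap A_3 = \emptyset$.

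Finally I would argue by contradiction. Suppose $|A| \geq 53$. Since each $|A_i| \leq 18$ and $18 + 17 + 17 = 52 < 53$, at least two slices must attain $|A_i| = 18$; by the uniqueness part of Lemma \ref{Lemma1}, each such slice equals the specific set $xyz \subset [3]^3$. (Note that $xyz$ is invariant under the coordinate permutations of $[3]^3$, so there is no ambiguity about which of the three remaining $[3]^4$-coordinates plays the role of the ``first'' axis in Lemma \ref{Lemma1}.) Thus two slices coincide, and their common value $xyz$ occupies $18$ points of $[3]^3$; the through-slice constraint forces the third slice into the complement $[3]^3 \setminus xyz$, a set of $27-18 = 9$ elements. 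Therefore $|A| \leq 18 + 18 + 9 = 45$, contradicting $|A| \geq 53$.

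There is no real obstacle: the entire argument is driven by the uniqueness of the $18$-element extremal in $[3]^3$, which is already in hand from Lemma \ref{Lemma1}. The only point requiring slight care is to note the full rigidity (not merely rigidity up to symmetry) of $xyz$, so that two simultaneous extremal slices are literally the same subset of $[3]^3$ and their intersection has the full $18$ points needed to crush the third slice.
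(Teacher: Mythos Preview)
Your argument is correct and follows essentially the same route as the paper: slice along the first coordinate, invoke the uniqueness of the $18$-point extremal from Lemma~\ref{Lemma1}, and observe that two coincident $18$-point slices force the third into the $9$-point complement. The paper phrases it directly (``at most one slice has $18$ points, so $|A|\le 18+17+17$'') and quotes a slightly sharper bound of $6$ for the third slice, but the logic is identical.
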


\begin{proof} Let $A$ be a line-free set in $[3]^4$, and split $A=A_1A_2A_3$ for $A_1,A_2,A_3 \in [3]^3$ as in the $n=3$ theory.  If at least two of the slices $A_1,A_2,A_3$ are of cardinality $18$, then by Lemma \ref{Lemma1} they are of the form $xyz$, and so the third slice then lies in the complement and has at most six points, leading to an inferior bound of $18+18+6 = 42$.  Thus at most one of the slices can have cardinality $18$, leading to the bound $18+17+17=52$.
\end{proof}

Now we classify extremisers.  Observe that we have the following (equivalent) $52$-point line-free sets, which were implicitly constructed in the previous section;

\begin{itemize}
\item $E_0 := A_{B_{0,4}}\backslash\{1111,2222\}$;
\item $E_1 := A_{B_{1,4}}\backslash\{2222,3333\}$;
\item $E_2 := A_{B_{2,4}}\backslash\{1111,3333\}$.
\end{itemize}

\begin{lemma}\label{Lemma2}\ 
\begin{itemize}
\item The only $52$-element line-free sets in $[3]^4$ are $E_0$, $E_1$, $E_2$.
\item The only $51$-element line-free sets in $[3]^4$ are formed by removing a point from $E_0$, $E_1$ or $E_2$.
\item The only $50$-element line-free sets in $[3]^4$ are formed by removing two points from $E_0$, $E_1$ or $E_2$ OR are equal to one of the three permutations of the set $X := \Gamma_{3,1,0} \cup \Gamma_{3,0,1} \cup \Gamma_{2,2,0} \cup \Gamma_{2,0,2} \cup \Gamma_{1,1,2} \cup \Gamma_{1,2,1} \cup \Gamma_{0,2,2}$.
\end{itemize} 
\end{lemma}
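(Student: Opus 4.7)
The plan is to slice $A = A_1 A_2 A_3$ along any fixed coordinate, identifying each slice $A_i$ with a subset of $[3]^3$. A combinatorial line of $[3]^4$ either lies entirely in a single slice (forcing each $A_i$ to be line-free) or has a wildcard in the slicing direction, giving the constraint: for every combinatorial line $\{p_1, p_2, p_3\}$ in $[3]^3$ --- and also every ``constant line'' $\{p, p, p\}$ (the case where the slicing coordinate is the sole wildcard) --- one cannot have $p_i \in A_i$ for all $i$. By Lemma~\ref{Lemma1}, each $|A_i| \leq 18$; and as shown in the proof of $c_{4,3} \leq 52$, having two $18$-slices forces both to equal $xyz$ and confines the third slice to the $9$-point complement of $xyz$, capping $|A|$ at $45$. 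So for $|A| \geq 50$ at most one slice can have size $18$.

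For $|A| = 52$, the slice sizes form the multiset $\{17, 17, 18\}$; by combinatorial symmetry we may assume $|A_2| = 18$, whence $A_2 = xyz$. Then $A_1, A_3$ are $17$-element line-free sets, enumerated by Lemma~\ref{Lemma1} as either $xyz$ with one point removed or $yzx$/$zxy$ with a diagonal corner removed. I would check, using the vertical constraint $A_1 \cap A_3 \cap xyz = \emptyset$ together with the combinatorial-line constraints coming from each line of $[3]^3$ meeting $A_2 = xyz$, which ordered pairs $(A_1, A_3)$ are admissible. I expect that only the three slicings corresponding to $E_0, E_1, E_2$ in the chosen direction survive, proving part (i).

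For $|A| = 51$ and $|A| = 50$ the strategy is extension: attempt to add one or two points to $A$ so as to reach the $52$-element classification. The size distributions to consider are $\{18, 17, 16\}$ and $\{17, 17, 17\}$ at size $51$, and $\{18, 17, 15\}, \{18, 16, 16\}, \{17, 17, 16\}$ at size $50$. In each subcase, fixing any $18$-slice as $xyz$ (or upgrading a $17$-slice when none is present) and applying Lemma~\ref{Lemma1} to the remaining slices leaves only a handful of completions compatible with the cross-slice constraints. For $|A| = 51$ every configuration extends and yields $A = E_j \setminus \{p\}$, proving part (ii). For $|A| = 50$ most configurations still extend and give $A = E_j \setminus \{p, q\}$, but an exceptional family appears where no valid single-point extension exists, corresponding to $X = \Gamma_{3,1,0} \cup \Gamma_{3,0,1} \cup \Gamma_{2,2,0} \cup \Gamma_{2,0,2} \cup \Gamma_{1,1,2} \cup \Gamma_{1,2,1} \cup \Gamma_{0,2,2}$ together with its two coordinate-permutation images.

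The hardest part is the $|A| = 50$ case, specifically identifying the exceptional family $X$ and showing that it exhausts all the $50$-element sets not of the form $E_j \setminus \{p, q\}$. The key observation is that the Fujimura set underlying $X$ is \emph{maximal}: a cell-by-cell check verifies that each of the eight cells of $\Delta_{4,3}$ outside $X$ completes a triangle with two cells already in $X$ (for instance, $(4,0,0)$ completes $(4,0,0), (3,1,0), (3,0,1)$ and $(0,4,0)$ completes $(2,2,0), (0,4,0), (0,2,2)$), so $X$ admits no full-cell extension to a line-free $51$-element set; ruling out partial-cell extensions is a further line-by-line verification through any added point. Proving $X$ is the unique new family requires sorting through the slice distributions above and, for each obstruction to extension, matching the resulting configuration against the $X$-template --- a finite but intricate task.
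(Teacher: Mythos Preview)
Your outline shares the paper's basic tools---slicing, Lemma~\ref{Lemma1}, and case-splitting on slice sizes---but is missing two ideas that the paper uses to actually close the argument. First, the paper does not attempt to ``extend'' a $50$- or $51$-set upward; instead it proves only the $50$-point statement directly (remarking that $51$ and $52$ follow), and its organizing target is to show that any $50$-point line-free set not equal to a permutation of $X$ is \emph{contained in} the $54$-point set $A_{B_{j,4}}$ for some $j$. Once that containment is established, a short separate argument forces omission of the diagonal and hence $A\subseteq E_j$. This containment target structures all the subcases and avoids the circularity latent in your extension framing: proving that a $51$-set extends to some $E_j$ \emph{is} the classification, so you end up needing the same case analysis anyway, just without a clean goal to aim at.

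Second, in several subcases the paper obtains a contradiction only by \emph{reslicing along a different coordinate axis}. For instance, in the $17{+}17{+}16$ split with the two $17$-slices lying in $D_{3,j}$ and $D_{3,k}$ for a ``bad'' ordered pair $(j,k)\in\{01,12,20\}$, the paper observes that none of the $*i**$ slices can then have $17$ or $18$ points, capping $|A|$ at $48$; and in the all-slicings-have-an-$18$ case it pins down the set on every $i$-slice containing a $1$ before analysing the residual in $\{2,3\}^4$. Your proposal works in a single slicing direction throughout and never mentions this move, without which those subcases do not close. The portion you label ``a finite but intricate task'' (isolating $X$ as the sole non-$E_j$ configuration) is precisely where almost all of the paper's work lies, and it is carried out via these two devices rather than by matching against the $X$-template.
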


\begin{proof} We will just prove the third claim, which is the hardest; the first two claims follow from the same argument (and can in fact be deduced directly from the third claim).

It suffices to show that every $50$-point line-free set is either contained in the $54$-point set $A_{B_{j,4}}$ for some $j=0,1,2$, or is some permutation of the set $X$. Indeed, if a $50$-point line-free set is contained in, say, $A_{B_{0,4}}$, then it cannot contain $2222$, since otherwise it must omit one point from each of the four pairs formed from $\{2333, 2111\}$ by permuting the indices, and must also omit one of $\{1111, 1222, 1333\}$, leading to at most $49$ points in all; similarly, it cannot contain $1111$, and so omits the entire diagonal $\{1111,2222,3333\}$, with two more points to be omitted. By symmetry we see the same argument works when $A_{B_{0,4}}$ is replaced by one of the other $A_{B_{j,4}}$.

Next, observe that every three-dimensional slice of a line-free set can have at most $c_{3,3} = 18$ points; thus when one partitions a $50$-point line-free set into three such slices, it must divide either as $18+16+16$, $18+17+15$, $17+17+16$, or some permutation of these.
Suppose that we can slice the set into two slices of $17$ points and one slice of $16$ points. By the various symmetries, we may assume that the $1***$ slice and $2***$ slices have $17$ points, and the $3***$ slice has $16$ points. By Lemma \ref{Lemma1}, the $1$-slice is $\{1\}\times D_{3,j}$ with one point removed, and the $2$-slice is $\{2\}\times D_{3,k}$ with one point removed, for some $j,k \in \{0,1,2\}$.
If $j=k$, then the $1$-slice and $2$-slice have at least $15$ points in common, so the $3$-slice can have at most $27 - 15 = 12$ points, a contradiction. If $jk = 01$, $12$, or $20$, then observe that from Lemma \ref{Lemma1} the $*1**$, $*2**$, $*3**$ slices cannot equal a $17$-point or $18$-point line-free set, so each have at most $16$ points, leading to only $48$ points in all, a contradiction. Thus we must have $jk = 10$, $21$, or $02$.

First suppose that $jk=02$. Then by Lemma \ref{Lemma1}, the $2***$ slice contains the nine points formed from $\{2211, 2322, 2331\}$ and permuting the last three indices, while the $1***$ slice contains at least eight of the nine points formed from $\{1211, 1322, 1311\}$ and permuting the last three indices. Thus the $3***$ slice can contain at most one of the nine points formed from $\{3211, 3322, 3311\}$ and permuting the last three indices. If it does contain one of these points, say $3211$, then it must omit one point from each of the four pairs $\{3222, 3233\}$, $\{3212, 3213\}$, $\{3221, 3231\}$, $\{3111, 3311\}$, leading to at most $15$ points on this slice, a contradiction. So the $3***$ slice must omit all nine points, and is therefore contained in $\{3\}\times D_{3,1}$, and so the $50$-point set is contained in $D_{4,1}$, and we are done by the discussion at the beginning of the proof.

The case $jk=10$ is similar to the $jk=02$ case (indeed one can get from one case to the other by swapping the $1$ and $2$ indices). Now suppose instead that $jk=12$. Then by Lemma \ref{Lemma1}, the $1***$ slice contains the six points from permuting the last three indices of $1123$, and similarly the $2***$ slice contains the six points from permuting the last three indices of $2123$. Thus the $3***$ slice must avoid all six points formed by permuting the last three indices of $3123$. Similarly, as $1133$ lies in the $1***$ slice and $2233$ lies in the $2***$ slice, $3333$ must be avoided in the $3***$ slice.

Now we claim that $3111$ must be avoided also; for if $3111$ was in the set, then one point from each of the six pairs formed from $\{3311, 3211\}$, $\{3331, 3221\}$ and permuting the last three indices must lie outside the $3***$ slice, which reduces the size of that slice to at most $27 - 6 - 1 - 6 = 14$, which is too small. Similarly, $3222$ must be avoided, which puts the $3***$ slice inside $\{3\}\times D_3$ and then places the $50$-point set inside $D_4$, and we are done by the discussion at the beginning of the proof.

We have handled the case in which at least one of the slicings of the $50$-point set is of the form $50=17+17+16$. The only remaining case is when all slicings of the $50$-point set are of the form $18+16+16$ or $18+17+15$ (or a permutation thereof). So each slicing includes an $18$-point slice.  By the symmetries of the situation, we may assume that the $1***$ slice has $18$ points, and thus by Lemma \ref{Lemma1} takes the form $\{1\}\times D_3$. Inspecting the $*1**$, $*2**$, $*3**$ slices, we then see (from Lemma \ref{Lemma1}) that only the $*1**$ slice can have $18$ points; since we are assuming that this slicing is some permutation of $18+17+15$ or $18+16+16$, we conclude that the $*1**$ slice must have exactly $18$ points, and is thus described precisely by Lemma \ref{Lemma1}. Similarly for the $**1*$ and $***1$ slices. Indeed, by Lemma \ref{Lemma1}, we see that the 50-point set must agree exactly with $D_{4,1}$ on any of these slices. In particular, there are exactly six points of the 50-point set in the remaining portion $\{2,3\}^4$ of the cube.

Suppose that $3333$ was in the set; then since all permutations of $3311$, $3331$ are known to lie in the set, then $3322$, $3332$ must lie outside the set. Also, as $1222$ lies in the set, at least one of $2222$, $3222$ lie outside the set. This leaves only $5$ points in $\{2,3\}^4$, a contradiction. Thus $3333$ lies outside the set; similarly $2222$ lies outside the set.

Let $a$ be the number of points in the $50$-point set which are some permutation of $2233$, thus $0\leq a\leq 6$. If $a=0$ then the set lies in $D_{4,1}$ and we are done. If $a=6$ then the set is exactly $X$ and we are done. Now suppose $a=1$. By symmetry we may assume that $2233$ lies in the set. Then (since $2133, 1233, 2231, 2213$ are known to lie in the set) $2333, 3233, 2223, 2232$ lie outside the set, which leaves at most $5$ points inside $\{2,3\}^4$, a contradiction.  A similar argument holds if $a=2,3$.

The remaining case is when $a=4,5$. Then one of the three pairs $\{2233, 3322\}$, $\{2323, 3232\}$, $\{2332, 3223\}$ lie in the set. By symmetry we may assume that $\{2233, 3322\}$ lie in the set. Then by arguing as before we see that all eight points formed by permuting $2333$ or $3222$ lie outside the set, leading to at most 5 points inside $\{2,3\}^4$, a contradiction. 
\end{proof}

\subsection{$n=5$}

Finally, we turn to the $n=5$ theory.  Our goal is to show that $c_{5,3} \leq 150$.  Accordingly, suppose for contradiction that we can find a line-free subset $A$ of $[3]^5$ of cardinality $|A|=151$.  We will now prove a series of facts about $A$ which will eventually give the desired contradiction.

\begin{lemma} $A$ is not contained inside $A_{B_{j,5}}$ for any $j=0,1,2$.  
\end{lemma}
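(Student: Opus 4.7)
My plan is to derive a numerical contradiction from the gap between $|A_{B_{j,5}}| = 2\cdot 3^{4} = 162$ and $|A|=151$. Since $5$ is not a multiple of $3$, the three sets $A_{B_{0,5}},A_{B_{1,5}},A_{B_{2,5}}$ are combinatorially equivalent (as noted earlier in Section~\ref{dhj-lower-sec}), so it suffices to rule out $A\subseteq A_{B_{0,5}}$. In that case $A$ arises by deleting exactly $162-151=11$ points from $A_{B_{0,5}}$, and these deletions must cover every combinatorial line in $[3]^5$ lying inside $A_{B_{0,5}}$.

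I first enumerate those lines. A combinatorial line with base $(a,b,c)$ and $r$ wildcards visits cells whose $a+2b$-residues are $a+2b+r,\ a+2b-r,\ a+2b \pmod 3$. If $r\not\equiv 0\pmod 3$, these three residues take all three values in $\{0,1,2\}$, so one of the cells lies outside $B_{0,5}$. Hence $r\equiv 0\pmod 3$, and combined with $a+b+c+r=5$ this forces $r=3$, $a+b+c=2$. Imposing $a+2b\not\equiv 0\pmod 3$ on the base leaves precisely the four bases $(2,0,0),\ (0,2,0),\ (1,0,1),\ (0,1,1)$, contributing $10,\ 10,\ 20,\ 20$ lines respectively (via the multinomial $\binom{5}{a,b,c,3}$).

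The twelve cells $\Gamma_{\vec a}$ that host vertices of these four triangles are indexed by distinct triples, hence are pairwise disjoint; consequently the covering problem decouples across bases. For base $(2,0,0)$ every one of its $10$ lines passes through the single point $11111\in\Gamma_{5,0,0}$, so at least $1$ deletion is required; likewise base $(0,2,0)$ requires at least $1$ deletion (through $22222$). For base $(1,0,1)$ I would parametrise the $20$ lines by ordered pairs $(i,j)\in[5]^2$ with $i\neq j$: the vertex in $\Gamma_{4,0,1}$ depends only on $j$ (covering $4$ lines), the vertex in $\Gamma_{1,0,4}$ depends only on $i$ (covering $4$ lines), and the vertex in $\Gamma_{1,3,1}$ is unique to its line. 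Since every vertex has degree at most $4$, any line cover uses at least $\lceil 20/4\rceil = 5$ deletions; the identical bound applies to base $(0,1,1)$ by a symmetric parametrisation.

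Adding the four independent lower bounds gives $1+1+5+5 = 12$ required deletions, exceeding the $11$-deletion budget and delivering the contradiction. The only sanity checks are the disjointness of the twelve cells (immediate from the list of index triples) and the maximum-degree bound for the two $20$-line hypergraphs; no case analysis or computer assistance is needed.
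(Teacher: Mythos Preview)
Your proof is correct and follows essentially the same approach as the paper: both reduce to $j=0$ by symmetry, note that $|A_{B_{0,5}}|=162$ so only $11$ deletions are available, and then argue that covering all combinatorial lines inside $A_{B_{0,5}}$ forces at least $12$ deletions. The only difference is in the execution of that last step: the paper simply exhibits $12$ pairwise vertex-disjoint lines (five cyclic rotations each from the bases $(1,0,1)$ and $(0,1,1)$, plus one line each through $11111$ and $22222$), whereas you arrive at the same count via a maximum-degree vertex-cover bound after decoupling by base.
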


\begin{proof} Suppose for contradiction that $A \subset A_{B_{j,5}}$ for some $j$.  By symmetry we may take $j=0$.  The set $A_{B_{0,5}}$ has $162$ points. By looking at the triplets $\{10000, 11110, 12220\}$ and cyclic permutations we must lose $5$ points; similarly from the triplets $\{20000,22220, 21110\}$ and cyclic permutations. Finally from $\{11000,11111,11222\}$ and $\{22000,22222,22111\}$ we lose two more points.   Since $162-5-5-2=150$, we obtain the desired contradiction.
\end{proof}

Observe that every slice of $A$ contains at most $c_{4,3}=52$ points, and hence every slice of $A$ contains at least $151-52-52=47$ points.

\begin{lemma} \label{NoTwo51} $A$ cannot have two parallel $[3]{}^4$ slices, each of which contain at least $51$ points.
\end{lemma}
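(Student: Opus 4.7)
The plan is to derive a contradiction with $|A| = 151$ by tightly bounding the third parallel slice $A_3$, using both a column constraint (from combinatorial lines $\{(i, w): i \in [3]\}$ of $[3]^5$) and a diagonal constraint (from combinatorial lines of $[3]^4$ embedded across the three parallel slices). Assuming two parallel slices $A_1, A_2 \subset [3]^4$ each have size $\geq 51$, Lemma~\ref{Lemma2} forces each to equal $E_{j_i}$ or $E_{j_i} \setminus \{p_i\}$ for some $j_i \in \{0, 1, 2\}$. By applying a letter permutation of $[3]$ (which acts transitively on $\{E_0, E_1, E_2\}$) and swapping the labels $A_1, A_2$ if necessary, one may assume $j_1 = 0$ and $j_2 \in \{0, 1\}$.

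The case $j_2 = 0$ is immediate: since $A_1, A_2 \subset E_0$, one has $|A_1 \cap A_2| \geq 50$, so the column constraint forces $|A_3| \leq 81 - 50 = 31$ and $|A| \leq 52 + 52 + 31 = 135 < 151$, a contradiction. For $j_2 = 1$, a direct cell computation gives $E_0 \cap E_1 = \Gamma_{3,1,0} \cup \Gamma_{2,0,2} \cup \Gamma_{1,2,1} \cup \Gamma_{0,1,3}$ of size $26$, so the column constraint alone only yields $|A_3| \leq 55$. To sharpen this, I would invoke the diagonal constraint: for every combinatorial line $L = (L_1, L_2, L_3)$ in $[3]^4$ with $L_1 \in A_1$ and $L_2 \in A_2$, the point $L_3$ must be forbidden from $A_3$, since otherwise $\{(i, L_i) : i \in [3]\}$ would form a combinatorial line in $[3]^5$. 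Applied to single-wildcard ($r=1$) lines of $[3]^4$ with fixed-letter pattern $(a_1, a_2, a_3)$ summing to $3$, the conditions $L_1 \in B_{0,4}$ and $L_2 \in B_{1,4}$ reduce to $a_1 + 2a_2 \not\equiv 2 \pmod 3$; enumerating these patterns excludes $A_3$ from $\Gamma_{1,1,2} \cup \Gamma_{2,1,1} \cup \Gamma_{0,2,2} \cup \Gamma_{1,0,3} \cup \{3333\}$, another $35$ points, disjoint from $E_0 \cap E_1$.

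Combining the two exclusions confines $A_3$ to a region of size $81 - 26 - 35 = 20$, giving $|A_3| \leq 20$ and $|A| \leq 52 + 52 + 20 = 124 < 151$, a contradiction. The main obstacle will be accounting for the slack when $A_i = E_{j_i} \setminus \{p_i\}$ has only $51$ points: the missing point $p_i$ relaxes a few of the diagonal constraints above, but a short case analysis over the possible cells containing $p_1$ and $p_2$ shows that the resulting gain in $|A_3|$ is bounded by a small constant (at most about $5$ extra points, since most forbidden cells have multiple redundant constraints per point), so $|A|$ still falls comfortably below $151$ in every subcase.
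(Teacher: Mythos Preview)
Your symmetry reduction is incorrect, and this is a genuine gap. A letter permutation $\sigma$ of $[3]$ acts on $[3]^5$ \emph{simultaneously} on the slicing coordinate and on the remaining four coordinates: the new slice in position $i$ is $\sigma$ applied to the old slice in position $\sigma^{-1}(i)$. So you cannot independently (a) place the two large slices in positions $1,2$ and (b) normalise $j_1$ to $0$. Once the large slices sit in positions $1$ and $2$, the only letter permutations that keep them there are the identity and $(1\ 2)$; under this residual symmetry the ordered type pairs $(j_1,j_2)$ with $j_1\neq j_2$ fall into four orbits $\{(0,1),(2,0)\}$, $\{(0,2),(1,0)\}$, $\{(1,2)\}$, $\{(2,1)\}$, not two. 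Your argument only treats the first of these (together with $j_1=j_2$), leaving e.g.\ $(j_1,j_2)=(1,0)$ or $(1,2)$ completely unhandled. The ``diagonal constraint'' you invoke genuinely depends on which slice sits in which position: for $(1,0)$ the $r=1$ lines you enumerate exclude only cells already killed by the column constraint, so you would need $r=2$ lines to get anywhere.

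Your underlying idea---bounding $|A_3|$ by combining the column constraint with exclusions coming from combinatorial lines of $[3]^4$ threaded through the three slices---is sound and, if carried through all four orbits, would give a valid alternative proof (there is ample slack, so the $51$-point perturbations are harmless as you suspect). The paper takes a different route: for the ``cyclic'' orbits $(0,1),(1,2),(2,0)$ it reslices $[3]^5$ along a different axis and observes that \emph{none} of the $*i***$ slices can match a configuration of Lemma~\ref{Lemma2}, capping each at $50$; for the ``anti-cyclic'' orbits $(1,0),(0,2),(2,1)$ it does a targeted line-by-line analysis to force many specific points out of the third slice. The paper's reslicing trick disposes of three cases in one line, whereas your cell-enumeration approach is more uniform but requires the full case split you skipped.
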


\begin{proof} Suppose not that $A$ has two parallel $[3]^4$ slices. By symmetry, we may assume that the $1****$ and $2****$ slices have at least $51$ points.  Meanwhile, the $3****$ slice has at least $47$ points as discussed above.

By Lemma \ref{Lemma2}, the $1****$ slice takes the form $\{1\}\times D_{4,j}$  for some $j = 0,1,2$ with the diagonal $\{11111,12222,13333\}$ and possibly one more point removed, and similarly the $2****$ slice takes the form $\{2\}\times D_{4,k}$ for some $k = 0,1,2$ with the diagonal $\{21111,22222,23333\}$ and possibly one more point removed.

Suppose first that $j=k$. Then the $1$-slice and $2$-slice have at least $50$ points in common, leaving at most $31$ points for the $3$-slice, a contradiction. Next, suppose that $jk=01$. Then observe that the $*i***$ slice cannot look like any of the configurations in Lemma \ref{Lemma2} and so must have at most $50$ points for $i=1,2,3$, leading to $150$ points in all, a contradiction. Similarly if $jk=12$ or $20$. Thus we must have $jk$ equal to $10$, $21$, or $02$.

Let's suppose first that $jk=10$. The first slice then is equal to $\{1\}\times D_{4,1}$ with the diagonal and possibly one more point removed, while the second slice is equal to $\{2\}\times D_{4,0}$ with the diagonal and possibly one more point removed. Superimposing these slices, we thus see that the third slice is contained in $\{3\}\times D_{4,2}$ except possibly for two additional points, together with the one point $32222$ of the diagonal that lies outside of $\{3\}\times D_{4,2}$.

The lines $x12xx, x13xx$ (plus permutations of the last four digits) must each contain one point outside the set. The first two slices can only absorb two of these, and so at least $14$ of the $16$ points formed by permuting the last four digits of $31233$, $31333$ must lie outside the set. These points all lie in $\{3\}\times D_{4,2}$, and so the $3****$ slice can have at most $| D_{4,2}| - 14 + 3 = 43$ points, a contradiction.

The case $jk=02$ is similar to the case $jk=10$ (indeed one can obtain one from the other by swapping $1$ and $2$). Now we turn to the case $jk=21$. Arguing as before we see that the third slice is contained in $\{3\}\times D_4$ except possibly for two points, together with $33333$.

If $33333$ was in the set, then each of the lines $xx333$, $xxx33$ (and permutations of the last four digits) must have a point missing from the first two slices, which cannot be absorbed by the two points we are permitted to remove; thus $33333$ is not in the set. For similar reasons, $33331$ is not in the set, as can be seen by looking at $xxx31$ and permutations of the last four digits. Indeed, any string containing four threes does not lie in the set; this means that at least $8$ points are missing from $\{3\}\times D_{4}$, leaving only at most $46$ points inside that set. Furthermore, any point in the $3****$ slice outside of $\{3\}\times D_{4}$ can only be created by removing a point from the first two slices, so the total cardinality is at most $46 + 52 + 52 = 150$, a contradiction.
\end{proof}

\begin{remark} This already gives the bound $c_{5,3} \leq 52+50+50=152$, but of course we wish to do better than this.
\end{remark}

\begin{lemma}\label{151-49} $A$ has a slice $j****$ with $j=1,2,3$ that has at most $49$ points. 
\end{lemma}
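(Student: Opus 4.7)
I would argue by contradiction: suppose every $[3]^4$-slice of $A$ contains at least $50$ points. Since each slice has size at most $c_{4,3}=52$, and by Lemma \ref{NoTwo51} no two parallel slices can both have size $\geq 51$, the equation $|A_1|+|A_2|+|A_3|=151$ forces every parallel triple of slice sizes (in all five slicing directions) to be $(51,50,50)$ in some order. Hence in each of the five directions, exactly one slice has $51$ points and the other two have $50$ points each.

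By Lemma \ref{Lemma2}, each $51$-point slice has the form $E_{j}\setminus\{p\}$ for some $j\in\{0,1,2\}$ and some $p\in E_j$, and therefore sits inside the Fujimura slab $A_{B_{j,4}}$. Each $50$-point slice is either similarly contained in some $A_{B_{k,4}}$ (with four points removed) or is one of the three permutations of the exceptional set $X$. The plan is then to carry out a case analysis on the containment types of the three parallel slices in a fixed direction: in each case I expect to show that either (i) a combinatorial line appears in $A$, contradicting line-freeness, or (ii) the three slices are forced to be compatible in such a way that $A$ is contained in one of $A_{B_{0,5}}$, $A_{B_{1,5}}$, $A_{B_{2,5}}$, contradicting the first lemma of this section. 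The exceptional $X$-permutation case for a $50$-point slice will need to be ruled out separately, by pairing it with the adjacent $51$-point slice and using the rigid $E_j$ structure of the latter (together with the cell-level description of $X$) to exhibit a forbidden combinatorial line.

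The main obstacle will be the bookkeeping: across the five slicing directions one must simultaneously track the $j$-values labeling the $E_j$ structures of the $51$-slices, verify that the implied containments $A_i\subset A_{B_{j_i,4}}$ fit together to place $A$ inside exactly one of the three slabs $A_{B_{j,5}}$ (ruling out all inconsistent combinations by exhibiting lines), and dispose of the exceptional $X$-type $50$-point slices. A useful simplification is that the main diagonal $\{iiiii:i=1,2,3\}$ of $[3]^5$ slices in every direction to the diagonal of $[3]^4$, and the $E_j$ structure systematically omits these diagonal points; this rigidity makes the slice data in different directions interlock tightly and should ultimately drive the contradiction.
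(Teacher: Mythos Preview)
There is a genuine logical gap in your setup. The negation of the lemma is only that the three slices $1{*}{*}{*}{*}$, $2{*}{*}{*}{*}$, $3{*}{*}{*}{*}$ in the \emph{first} coordinate direction all have at least $50$ points. From this (together with Lemma~\ref{NoTwo51} and $|A|=151$) you correctly deduce that these three slices have sizes $(51,50,50)$ in some order. But you then assert that ``in each of the five directions, exactly one slice has $51$ points and the other two have $50$ points each.'' This does not follow: the contradiction hypothesis says nothing about the other four slicing directions, where one could perfectly well have a slice of size $\leq 49$ (and hence a pattern like $(52,50,49)$). Your plan to ``track the $j$-values labeling the $E_j$ structures of the $51$-slices'' across all directions therefore has no foundation. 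If instead you intended your opening hypothesis to be that \emph{all fifteen} $[3]^4$-slices have size $\geq 50$, then you are assuming more than the negation of the lemma and would only prove a weaker statement.

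The paper's proof stays entirely within one slicing direction and uses a different device: it writes each $[3]^4$-slice as a $3\times 3$ array of $[3]^2$-blocks (so $E_0,E_1,E_2,X,Y,Z$ each become explicit $3\times 3$ patterns in the letters $x,y,z,w,a,b,c,x',y',z'$), and then analyses the nine ``columns'' of the resulting $3\times 3\times 3$ array, each of which is a copy of $[3]^3$ subject to the $n=3$ classification. The case split is simply on how many of the three slices are of $E$-type versus of $X/Y/Z$-type: with all three of $E$-type, too many columns fail to be $xyz$ and must each lose a point; with one $X/Y/Z$ the same column count overruns the budget; with two $X/Y/Z$ a direct count gives at most $148$ points. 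No cross-direction bookkeeping or containment in $A_{B_{j,5}}$ is needed. Your aim of forcing $A\subset A_{B_{j,5}}$ is an interesting alternative target, but even restricted to one direction it is not clear that the slice types must align cyclically to produce such a containment, and the paper's column-counting gets the contradiction more directly.
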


\begin{proof} Suppose not, thus all three slices of $A$ has at least $50$ points.
Using earlier notation, we split subsets of $[3]{}^4$ into nine subsets of $[3]{}^2$. So we think of $x,y,z,a,b$ and $c$ as subsets of a square. By Lemma \ref{Lemma2}, each slice is one of the following:
\begin{itemize}
\item $E_0 = y'zx,zx'y,xyz$ (with one or two points removed)
\item $E_1 = xyz,yz'x,zxy'$ (with one or two points removed)
\item $E_2 = z'xy,xyz,yzx'$ (with one or two points removed)
\item $X = xyz,ybw,zwc$
\item $Y = axw,xyz,wzc$
\item $Z = awx,wby,xyz$
\end{itemize}
where $a$, $b$ and $c$ have four points each: $a = \{2,3\}^2$, $b = \{1,3\}^2$ and $c = \{1,2\}^2$. $x'$, $y'$ and $z'$ are subsets of $x$, $y$ and $z$ respectively, and have five points each.

Suppose all three slices are subsets of $E_{j_1}, E_{j_2}, E_{j_3}$ respectively for some $j_1,j_2,j_3 \in \{0,1,2\}$, $E_1$, or $E_2$. We can remove at most five points from the full set $E_{j_1} \uplus E_{j_2} \uplus E_{j_3}$. Consider columns $2,3,4,6,7,8$. At most two of these columns contain $xyz$, so one point must be removed from the other four. This uses up all but one of the removals. So the slices must be $E_2$, $E_1$, $E_0$ or a cyclic permutation of that. Then the cube, which contains the first square of slice $1$; the fifth square of slice $2$; and the ninth square of slice $3$, contains three copies of the same square. It takes more than one point removed to remove all lines from that cube. So we can't have all three slices subsets of $E_j$.

Suppose one slice is $X$, $Y$ or $Z$, and two others are subsets of $E_j$. We can remove at most three points from the two $E_j$. By symmetry, suppose one slice is $X$. Consider columns $2$, $3$, $4$ and $7$. They must be cyclic permutations of $x$, $y$, $z$, and two of them are not $xyz$, so must lose a point. Columns $6$ and $8$ must both lose a point, and we only have $150$ points left. So if one slice is $X$, $Y$ or $Z$, the full set contains a line.

Suppose two slices are from $X$, $Y$ and $Z$, and the other is a subset of $E_j$. By symmetry, suppose two slices are $X$ and $Y$. Columns $3$, $6$, $7$ and $8$ all contain $w$, and therefore at most $16$ points each. Columns $1$, $5$ and $9$ contain $a$, $b$, or $c$, and therefore at most $16$ points. So the total number of points is at most $7 \times 16+2 \times 18 = 148 < 151$, a contradiction.
\end{proof}

This, combined with Lemma \ref{NoTwo51}, gives

\begin{corollary}\label{525049} Any three parallel slices of $A$ must have cardinality $52, 50, 49$ (or a permutation thereof).  
\end{corollary}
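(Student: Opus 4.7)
The plan is to combine the three inequalities already established: the absolute upper bound $c_{4,3}=52$ on any four-dimensional slice, the bound from Lemma~\ref{NoTwo51} which forbids two parallel slices of size $\geq 51$, and the bound from Lemma~\ref{151-49} which supplies a slice of size $\leq 49$. First I would fix a parallel slicing and let $a \geq b \geq c$ denote the three slice sizes, so that $a+b+c = |A| = 151$.

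The absolute bound gives $a \leq 52$. Lemma~\ref{NoTwo51} rules out two slices of size $\geq 51$, so $b \leq 50$. For the third bound, I would invoke the symmetry of the five coordinates of $[3]^5$: Lemma~\ref{151-49} is stated for the first coordinate, but since the combinatorial symmetry group permutes the coordinates arbitrarily and we are free to apply the lemma after any such permutation, the same conclusion holds in every parallel direction. Hence at least one slice has size $\leq 49$, giving $c \leq 49$.

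Adding these three bounds yields $a+b+c \leq 52 + 50 + 49 = 151$. Since the sum is exactly $151$, each inequality must be tight, forcing $(a,b,c) = (52,50,49)$. There is essentially no obstacle; the argument is just a matching of the bookkeeping inequalities to the target partition, and the only subtle point is noting that Lemma~\ref{151-49} applies in every parallel direction by the coordinate-permutation symmetry of the density Hales--Jewett setup.
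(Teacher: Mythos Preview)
Your proof is correct and is exactly the argument the paper has in mind: the paper states the corollary with the one-line justification ``This, combined with Lemma~\ref{NoTwo51}, gives'', and your write-up simply unpacks that into the three inequalities $a\le 52$, $b\le 50$, $c\le 49$ summing to $151$. Your remark that Lemma~\ref{151-49} transfers to every coordinate direction via the coordinate-permutation symmetry is the right way to fill in the one detail the paper leaves implicit.
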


Note that this argument already gives the bound $c_{5,3} \leq 151$.

\begin{lemma} \label{151NoX} No slice $j****$ of $A$ is of the form $X$, where $X$ was defined in Lemma \ref{Lemma2}. 
\end{lemma}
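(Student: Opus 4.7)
By Corollary~\ref{525049}, the three slices in the $u_1$-direction have sizes $52, 50, 49$, and the one equivalent to $X$ (if any) must be the $50$-point slice. After an alphabet permutation, and using cyclic alphabet symmetry---which permutes the three sets $\{X_0, X_1, X_2\}$ equivalent to $X$ and the three sets $\{E_0, E_1, E_2\}$ compatibly---it suffices to handle the case $A_1 = X$ exactly. The $(2,3)$ alphabet swap preserves $X$ (since $B_X$ is invariant under $a_2 \leftrightarrow a_3$) while exchanging $A_2$ and $A_3$, so WLOG $|A_2| = 52$ and $|A_3| = 49$. By Lemma~\ref{Lemma2}, $A_2 \in \{E_0, E_1, E_2\}$.

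Direct computation of cell overlaps yields $|X \cap E_0| = |X \cap E_2| = 28$ and $|X \cap E_1| = 44$. The combinatorial line with wildcard only at position~$1$ forces $A_1 \cap A_2 \cap A_3 = \emptyset$, so $|A_3| \leq 81 - |A_1 \cap A_2|$. The case $A_2 = E_1$ dies immediately ($49 \leq 37$). For $A_2 = E_2$, the $(2,3)$ swap on $[3]^4$ sends $X \cap E_2$ to $X \cap E_0$, reducing to the case $A_2 = E_0$.

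For $A_2 = E_0$: $A_3$ is a $49$-point line-free subset of the $53$-point region $R := [3]^4 \setminus (\Gamma_{3,1,0} \cup \Gamma_{2,0,2} \cup \Gamma_{1,2,1} \cup \Gamma_{0,2,2})$. Thus $A_3$ omits only $4$ points of $R$ and must hit every combinatorial line of $[3]^4$ entirely contained in $R$. These lines correspond to triangles of cells $\{(a+r,b,c), (a,b+r,c), (a,b,c+r)\}$ lying inside $R$; enumeration identifies exactly six such triangles, of sides $r = 1, 1, 2, 3, 3, 4$, producing $4 + 4 + 12 + 4 + 4 + 1 = 29$ lines.

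The hard part is showing no $4$ points of $R$ meet all $29$ lines. I plan to compute the per-point incidence counts (the single point $3333 \in \Gamma_{0,0,4}$ lies on $9$ lines; each point of $\Gamma_{0,3,1}$ and each of $1111, 2222$ lies on $5$ lines; each point of $\Gamma_{0,1,3}$ on $4$; every other point of $R$ on at most $2$), and verify by inclusion--exclusion that any $4$-point subset of $R$ covers at most $21$ of the $29$ lines (the maximum attained, for instance, by $\{3333\}$ together with any three points of $\Gamma_{0,3,1}$). The bottleneck is the side-$2$ triangle on $\{\Gamma_{2,1,1}, \Gamma_{0,3,1}, \Gamma_{0,1,3}\}$, whose $12$ lines already demand a vertex cover of size $\geq 4$, absorbing the entire missed-point budget and leaving the remaining triangles' $17$ lines uncovered. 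Since $21 < 29$, at least one line of $R$ lies entirely inside $A_3$, contradicting the line-freeness of $A_3$ and completing the proof.
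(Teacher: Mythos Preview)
Your argument is correct and takes a genuinely different route from the paper's.

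\textbf{Comparison.} The paper re-slices $[3]^5$ along a second coordinate direction (the ``left nine / middle nine / right nine'' decomposition into $3\times 3$ blocks of $[3]^2$-squares) and reapplies Corollary~\ref{525049} to the orthogonal slicing, propagating the $52{+}50{+}49$ constraint across two axes until the resulting system of block-constraints becomes inconsistent. Your argument stays within a single slicing, uses only the vertical-line constraint $A_1\cap A_2\cap A_3=\emptyset$ to confine the $49$-point slice to the $53$-point region $R=[3]^4\setminus(X\cap E_0)$, and then finishes with a hitting-set argument on the $29$ combinatorial lines lying entirely inside $R$. The cleanest version of your endgame is the one you identify at the very end: covering all $12$ lines of the side-$2$ triangle $T_4=\{(2,1,1),(0,3,1),(0,1,3)\}$ with only $4$ points forces each of those points to lie in $\Gamma_{0,3,1}\cup\Gamma_{0,1,3}$ (since these are the only cells meeting $3$ of the $12$ lines), and no such point touches any line of the side-$3$ triangle $T_2=\{(4,0,0),(1,3,0),(1,0,3)\}$ --- so those $4$ lines go uncovered. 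This makes the intermediate ``max $21$ lines'' computation unnecessary, though it is correct.

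\textbf{One cosmetic point.} Your simultaneous reductions ``WLOG $A_1=X$'' and ``WLOG $|A_2|=52$'' together invoke slightly more symmetry than the $S_3$ alphabet action actually provides: that action has two orbits on pairs (slice-index, $X$-permutation), not one. However, this is harmless, because your subsequent argument depends only on the unordered triple of slices via the symmetric constraint $A_1\cap A_2\cap A_3=\emptyset$, and you correctly observe that all $28$-overlap pairs $(X',E_m)$ lie in a single $S_3$-orbit (your $(2,3)$-swap reduction of $E_2$ to $E_0$ being one instance). A one-line remark to this effect would make the reduction watertight.

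What each approach buys: the paper's double-slicing reuses existing machinery and avoids any new line-enumeration; your approach is self-contained within one axis and gives an explicit combinatorial obstruction, at the cost of the cell-by-cell bookkeeping.
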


\begin{proof} Suppose one slice is $X$; then by the previous discussion one of the parallel slices has $52$ points and is thus of the form $E_j$ for some $j=0,1,2$, by Lemma \ref{Lemma2}.

Suppose that $X$ is the first slice $1****$.  We have  $X = xyz\ ybw \ zwc$. Label the other rows with letters from the alphabet, thus
$$ A = \begin{pmatrix} xyz & ybw & zwc \\ mno & pqr & stu \\def & ghi & jkl 
\end{pmatrix}$$
Reslice the array into a left nine, middle nine and right nine. One of these squares contains $52$ points, and it can only be the left nine. One of its three columns contains $18$ points, and it can only be its left-hand column, $xmd$. So $m=y$ and $d=z$. But none of the $E_j$ begins with $y$ or $z$, which is a contradiction. So $X$ is not in the first row.

So $X$ is in the second or third row. By symmetry, suppose it is in the second row, so that $A$ has the following shape:
$$ A = \begin{pmatrix} def & ghi & jkl \\xyz & ybw & zwc \\ mno & pqr & stu \end{pmatrix}$$
Again, the left-hand nine must contain $52$ points, so it is $E_2$. Now, to get $52$ points in any row, the first row must be $E_2$. Then the only way to have $50$ points in the middle or right-hand nine is if the middle nine is $X$:
$$ A = \begin{pmatrix} z'xy & xyz & yzx' \\xyz & ybw & zwc \\ yzx' & zwc & stu \end{pmatrix}$$
In the seventh column, $s$ contains $5$ points and in the eighth column, $t$ contains $4$ points. The final row can now contain at most $48$ points, contradicting Corollary \ref{525049}.

A similar argument is possible if $X$ is in the third row; or if $X$ is replaced by $Y$ or $Z$.  Thus, given any decomposition of $A$ into three parallel slices, one slice is a $52$-point set $E_j$ and another slice is $50$ points contained in $E_k$. \end{proof}

Now we can obtain the desired contradiction:

\begin{lemma} There is no $151$-point line-free set $A \subset [3]^5$. \end{lemma}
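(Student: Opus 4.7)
The idea is to extract a contradiction by tracking the $52$- and $50$-point slices of $A$ in every coordinate direction simultaneously. By Corollary~\ref{525049} combined with Lemma~\ref{151NoX} and Lemma~\ref{Lemma2}, every slicing of $A$ into three parallel $[3]^4$-slices has cardinalities $(52,50,49)$; the $52$-point slice is exactly some $E_j$, and the $50$-point slice is $E_k$ with exactly two points removed. After permuting coordinates and applying an alphabet permutation, I may assume that $\{x_1=1\}$ has $52$ points and equals $\{1\}\times E_0$.

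Next, I compute the first-coordinate slices of each of $E_0,E_1,E_2$. Using the explicit description $E_j=A_{B_{j,4}}\setminus\{\cdots\}$ and Lemma~\ref{Lemma1}, these slices turn out to be (up to removal of one diagonal point) the three rotations $xyz$, $yzx$, $zxy$ of the unique $18$-point line-free set in $[3]^3$; this yields a small explicit table of which slice each $E_j$ has at each of its three positions. Now consider the slicing of $A$ in direction~$2$: the $52$-point slice at some position $i_2$ equals $\{i_2\}\times E_{j_2}$, and its intersection with $\{x_1=1\}$ must agree with the slice of $E_0$ at $x_2=i_2$. Reading off the table shows that only $(i_2,j_2)\in\{(1,0),(3,1)\}$ is compatible. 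A parallel analysis restricts the $50$-point slice in direction~$2$: it equals $\{i_2'\}\times(E_{k_2}\setminus S_2)$ with $|S_2|=2$, and the location of the removed points is further constrained by compatibility with the already-determined $\{x_1=1\}$-structure.

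Repeating this analysis in directions $3$, $4$, and $5$ cascades the constraints. In each surviving case, the elements of $A$ in the various sub-$[3]^3$ slices $\{x_d=i_d,\,x_e=i_e\}$ are determined so precisely that one can exhibit three points $w(1),w(2),w(3)$ of some combinatorial line $w$ (typically with two wildcards) all lying in $A$, contradicting the line-free hypothesis. The main obstacle is the binary case split in each of the five directions, but the rigidity of the $E_j$'s (each determined by the single residue $j\in\{0,1,2\}$) and the fact that each $50$-slice differs from an $E_k$ by only two points keep the branching finite, and in every surviving case the contradicting line emerges naturally from the cross-direction compatibility requirements.
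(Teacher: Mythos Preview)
Your plan is in the right spirit---like the paper, you exploit that every slicing gives a rigid $52$-slice equal to some $E_j$ and a $50$-slice equal to $E_k$ minus two points, and you propagate constraints across directions. But what you have written is an outline, not a proof, and two aspects would need to be made concrete before it works.

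First, the case analysis is not carried out. You assert, for instance, that compatibility in direction~2 forces $(i_2,j_2)\in\{(1,0),(3,1)\}$, but you do not verify this; and you do not explain how the branches in directions $3,4,5$ interact or why the tree of cases stays small. The paper avoids an explicit five-direction case split by using a compact $3\times 3$ block encoding of $[3]^5$ (each slice written as nine $[3]^2$-cells labelled $x,y,z,w,a,b,c,\ldots$) and a ``reslicing'' trick: having fixed the $52$-row and the $50$-row, one reslices into the left, middle, and right $3\times 3$ blocks, and the requirement that one of \emph{those} also have size $52$ immediately eliminates most choices of $(j,k)$. This reduces the work to a single surviving configuration (first row $E_2$, second row $\subset E_1$) rather than a five-direction branching.

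Second, your anticipated endgame---an explicit combinatorial line in every surviving case---is not how the contradiction actually materialises. In the paper's argument, once the first two rows are pinned down as $E_2$ and (a subset of) $E_1$, the multi-directional reslicing forces the third row to agree with $E_0$ except possibly at a single point. Removing that point gives a $150$-point set inside $D_{5,j}$ all of whose parallel slicings are $52+50+48$; a direct count of forced deletions then shows such a configuration can have at most $148$ points, which is the contradiction. So the final step is a counting argument, not the exhibition of a single line; your plan should be adjusted accordingly.
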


\begin{proof} Assume by symmetry that the first row contains $52$ points and the second row contains $50$.
If $E_1$ is in the first row, then the second row must be contained in $E_0$:
$$ A = \begin{pmatrix} xyz & yz'x & zxy' \\ y'zx & zx'y & xyz \\ def & ghi & jkl\end{pmatrix}$$
But then none of the left nine, middle nine or right nine can contain $52$ points, which contradicts Corollary \ref{525049}.
Suppose the first row is $E_0$. Then the second row is contained in $E_2$, otherwise the cubes formed from the nine columns of the diagram would need to remove too many points:
$$ A = \begin{pmatrix} y'zx & zx'y & xyz \\ z'xy & xyz & yzx' \\ def & ghi & jkl \end{pmatrix}.$$
But then neither the left nine, middle nine nor right nine contain $52$ points.
So the first row contains $E_2$, and the second row is contained in $E_1$. Two points may be removed from the second row of this diagram:
$$ A = \begin{pmatrix} z'xy & xyz & yzx' \\ xyz & yz'x & zxy' \\ def & ghi & jkl \end{pmatrix}.$$
Slice it into the left nine, middle nine and right nine. Two of them are contained in $E_j$ so at least two of $def$, $ghi$, and $jkl$ are contained in the corresponding slice of $E_0$. Slice along a different axis, and at least two of $dgj$, $ehk$, $fil$ are contained in the corresponding slice of $E_0$. So eight of the nine squares in the bottom row are contained in the corresponding square of $E_0$. Indeed, slice along other axes, and all points except one are contained within $E_0$. This point is the intersection of all the $49$-point slices.
So, if there is a $151$-point solution, then after removal of the specified point, there is a $150$-point solution, within $D_{5,j}$, whose slices in each direction are $52+50+48$.
$$ A = \begin{pmatrix} z'xy & xyz & yzx' \\ xyz & yz'x & zxy' \\ y'zx & zx'y & xyz \end{pmatrix}$$
One point must be lost from columns $3$, $6$, $7$ and $8$, and four more from the major diagonal $z'z'z$. That leaves $148$ points instead of $150$.
So the $150$-point solution does not exist with $52+50+48$ slices; so the $151$ point solution does not exist. 
\end{proof}

This establishes that $c_{5,3} \leq 150$, and thus $c_{6,3} \leq 3c_{5,3} \leq 450$.
\section{Lower bounds for the Moser problem}\label{moser-lower-sec}

Just as for the density Hales-Jewett problem, we found that Gamma sets $\Gamma_{a,b,c}$ were useful in providing large lower bounds for the Moser problem.  This is despite the fact that the symmetries of the cube do not respect Gamma sets.

Observe that if $B \subset \Delta_n$, then the set $A_B := \bigcup_{\vec a \in B} \Gamma_{a,b,c}$ is a Moser set as long as $B$ does not contain any ``isosceles triangles'' $(a+r,b,c+s), (a+s,b,c+r), (a,b+r+s,c)$ for any $r,s \geq 0$ not both zero; in particular, $B$ cannot contain any ``vertical line segments'' $(a+r,b,c+r), (a,b+2r,c)$.  An example of such a set is provided by selecting $0 \leq i \leq n-3$ and letting $B$ consist of the triples $(a, n-i, i-a)$ when $a \neq 3 \mod 3$, $(a,n-i-1,i+1-a)$ when $a \neq 1 \mod 3$, $(a,n-i-2,i+2-a)$ when $a=0 \mod 3$, and $(a,n-i-3,i+3-a)$ when $a=2 \mod 3$.  Asymptotically, this set includes about two thirds of the spheres $S_{n,i}$, $S_{n,i+1}$ and one third of the spheres $S_{n,i+2}, S_{n,i+3}$ and (setting $i$ close to $n/3$) gives a lower bound 
\begin{equation}\label{cpn3}
c'_{n,3} \geq (C-o(1)) 3^n / \sqrt{n} 
\end{equation} 
with $C = 2 \times \sqrt{\frac{9}{4\pi}}$.  This lower bound is the asymptotic limit of our methods; see Proposition \ref{props} below.

An integer program was solved to obtain the optimal lower bounds achievable by the $A_B$ construction (using \eqref{cn3}, of course).  The results for $1 \leq n \leq 20$ are displayed in Figure \ref{nlow-moser}. More complete data, including the list of optimisers, can be found at \cite{markstrom}.

\begin{figure}[tb]
\centerline{
\begin{tabular}{|ll|ll|}
\hline
$n$ & lower bound & $n$ & lower bound \\
\hline
1 & 2 &11&	71766\\
2 & 6 & 12&	212423\\
3 &	16 & 13&	614875\\
4 &	43 & 14&	1794212\\
5 &	122& 15&	5321796\\
6 &	353& 16&	15455256\\
7 &	1017& 17&	45345052\\
8 &	2902&18&	134438520\\
9 &	8622&19&	391796798\\
10&	24786& 20&	1153402148\\
\hline
\end{tabular}}
\caption{Lower bounds for $c'_{n,3}$ obtained by the $A_B$ construction.}
\label{nlow-moser}
\end{figure}

\begin{figure}[tb]
\centerline{\includegraphics{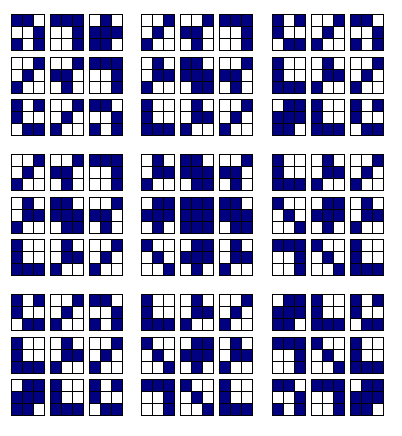}}
\caption{One of the examples of $353$-point sets in $[3]^6$ (elements of the set being indicated by white squares).  This example was generated by a genetic algorithm.}
\label{moser353-fig}
\end{figure}

Unfortunately, any method based purely on the $A_B$ construction cannot do asymptotically better than the previous constructions:

\begin{proposition}\label{props}  Let $B \subset \Delta_n$ be such that $A_B$ is a Moser set.  Then $|A_B| \leq (2 \sqrt{\frac{9}{4\pi}} + o(1)) \frac{3^n}{\sqrt{n}}$.
\end{proposition}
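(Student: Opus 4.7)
My plan centers on the \emph{vertical line segment} constraint in the definition of Moser admissibility: $B$ may not contain both $(a+r,b,c+r)$ and $(a,b+2r,c)$ for any $r\ge 1$. The crucial geometric reinterpretation is that pairs of this form are precisely the pairs of points of $\Delta_n$ lying on a common line of constant $a-c$. So for each $m \in \{-n,-n+1,\dots,n\}$, defining the \emph{vertical line}
\[ L_m := \{(a,b,c)\in\Delta_n : a-c=m\}, \]
the first step is to verify that $|B \cap L_m| \le 1$. Indeed, two distinct points of $B \cap L_m$ share the value of $a-c$, and since $a+b+c=n$, their $b$-coordinates differ by an even positive integer $2r$; the smaller-$b$ point is then expressible as $(a_2 + r, b_1, c_2 + r)$ and the larger as $(a_2, b_1 + 2r, c_2)$ relative to the common base point $(a, b, c) := (a_2, b_1, c_2)$ (which satisfies $a+b+c = n-2r$), which is exactly the forbidden configuration.

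Given this per-line bound, I would next estimate
\[ |A_B| = \sum_{(a,b,c)\in B}\binom{n}{a,b,c} \;\le\; \sum_{m=-n}^n M(m), \qquad M(m) := \max_{(a,b,c)\in L_m}\binom{n}{a,b,c}, \]
and apply Stirling's approximation (equivalently, the local central limit theorem for the uniform multinomial distribution on $\Delta_n$, whose covariance is $\Sigma = \frac{n}{9}\bigl(\begin{smallmatrix}2&-1\\-1&2\end{smallmatrix}\bigr)$) to evaluate $M(m)$. Parameterizing $L_m$ by $u := b - n/3$ and writing $a-n/3 = (m-u)/2,\ c-n/3 = -(m+u)/2$, a short calculation yields $z^T \Sigma^{-1} z = \tfrac{3}{2n}(m^2+3u^2)$, so the maximum over $u$ is attained near $u=0$, and
\[ M(m) = (1+o(1))\cdot\frac{3\sqrt 3}{2\pi n}\cdot 3^n\cdot \exp\!\bigl(-3m^2/(4n)\bigr) \]
uniformly for $|m| = O(\sqrt{n\log n})$; the contribution from $|m| \gg \sqrt{n\log n}$ is negligible by standard Chernoff-type tail bounds.

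The last step is to replace the sum over $m$ by a Gaussian integral,
\[ \sum_{m=-n}^n M(m) = (1+o(1))\cdot\frac{3\sqrt 3}{2\pi n}\cdot 3^n\int_{-\infty}^\infty \exp\!\bigl(-3x^2/(4n)\bigr)\,dx = (1+o(1))\cdot\frac{3}{\sqrt\pi}\cdot\frac{3^n}{\sqrt n}, \]
which is exactly the claimed bound since $2\sqrt{9/(4\pi)} = 3/\sqrt\pi$. The main obstacle is the uniform Stirling/Gaussian estimate in the second step: one must ensure the approximation holds simultaneously in a window wide enough to justify the Riemann sum comparison with the Gaussian integral, and the parity constraint $b \equiv n+m \pmod 2$ on $L_m$ needs to be tracked (it shifts the optimizer by at most one lattice step and only affects subleading terms). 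Notably, the non-degenerate ($r\neq s$) isosceles-triangle constraints play no role here---the degenerate $r=s$ case, which is exactly what ``vertical line segment'' encodes, is already sharp.
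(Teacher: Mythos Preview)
Your proposal is correct and follows essentially the same approach as the paper: both arguments use the vertical line segment constraint to deduce that $B$ meets each line of constant $a-c$ (equivalently $c-a$) in at most one point, bound $|A_B|$ by the sum over these lines of the maximal multinomial coefficient, apply Stirling's formula to approximate that maximum as a Gaussian in the line index, and replace the sum by a Gaussian integral. The only cosmetic differences are that you phrase the Stirling step via the multinomial covariance matrix and the local CLT, whereas the paper does the Taylor expansion of $\log(n!/a!b!c!)$ by hand, and you explicitly flag the parity issue on $b$ and the irrelevance of the non-degenerate isosceles constraints.
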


\begin{proof}  By the previous discussion, $B$ cannot contain any pair of the form $(a,b+2r,c), (a+r,b,c+r)$ with $r>0$.  In other words, for any $-n \leq h \leq n$, $B$ can contain at most one triple $(a,b,c)$ with $c-a=h$.  From this and \eqref{cn3}, we see that
$$ |A_B| \leq \sum_{h=-n}^n \max_{(a,b,c) \in \Delta_n: c-a=h} \frac{n!}{a! b! c!}.$$
From the Chernoff inequality (or the Stirling formula computation below) we see that $\frac{n!}{a! b! c!} \leq \frac{1}{n^{10}} 3^n$ unless $a,b,c = n/3 + O( n^{1/2} \log^{1/2} n )$, so we may restrict to this regime, which also forces $h = O( n^{1/2}\log^{1/2} n)$.  If we write $a = n/3 + \alpha$, $b = n/3 + \beta$, $c = n/3+\gamma$ and apply Stirling's formula $n! = (1+o(1)) \sqrt{2\pi n} n^n e^{-n}$, we obtain 
$$ \frac{n!}{a! b! c!} = (1+o(1)) \frac{3^{3/2}}{2\pi n} 3^n \exp( - (\frac{n}{3}+\alpha) \log (1 + \frac{3\alpha}{n} ) - (\frac{n}{3}+\beta) \log (1 + \frac{3\beta}{n} ) - (\frac{n}{3}+\gamma) \log (1 + \frac{3\gamma}{n} ) ).$$
From Taylor expansion one has
$$ -(\frac{n}{3}+\alpha) \log (1 + \frac{3\alpha}{n} ) = -\alpha - \frac{3}{2} \frac{\alpha^2}{n} + o(1)$$
and similarly for $\beta,\gamma$; since $\alpha+\beta+\gamma=0$, we conclude that
$$ \frac{n!}{a! b! c!} = (1+o(1)) \frac{3^{3/2}}{2\pi n} 3^n \exp( - \frac{3}{2n} (\alpha^2+\beta^2+\gamma^2) ).$$
If $c-a=h$, then $\alpha^2+\beta^2+\gamma^2 = \frac{3\beta^2}{2} + \frac{h^2}{2}$.  Thus we see that
$$ \max_{(a,b,c) \in \Delta_n: c-a=h} \frac{n!}{a! b! c!} \leq (1+o(1)) \frac{3^{3/2}}{2\pi n} 3^n \exp( - \frac{3}{4n} h^2 ).$$
Using the integral test, we thus have
$$ |A_B| \leq (1+o(1)) \frac{3^{3/2}}{2\pi n} 3^n  \int_\R \exp( - \frac{3}{4n} x^2 )\ dx.$$
Since $\int_\R \exp( - \frac{3}{4n} x^2 )\ dx = \sqrt{\frac{4\pi n}{3}}$, we obtain the claim.
\end{proof}

Actually it is possible to improve upon these bounds by a slight amount.  Observe that if $B$ is a maximiser for the right-hand side of \eqref{cn3} (subject to $B$ not containing isosceles triangles), then any triple $(a,b,c)$ not in $B$ must be the vertex of a (possibly degenerate) isosceles triangle with the other vertices in $B$.  If this triangle is non-degenerate, or if $(a,b,c)$ is the upper vertex of a degenerate isosceles triangle, then no point from $\Gamma_{a,b,c}$ can be added to $A_B$ without creating a geometric line. However, if $(a,b,c) = (a'+r,b',c'+r)$ is only the lower vertex of a degenerate isosceles triangle $(a'+r,b',c'+r), (a',b'+2r,c')$, then one can add any subset of $\Gamma_{a,b,c}$ to $A_B$ and still have a Moser set as long as no pair of elements in that subset is separated by Hamming distance $2r$.  For instance, in the $n=5$ case, we can start with the 122-point set built from 
$$B = \{ (0 0 5),(0 2 3),(1 1 3 ),(1 2 2 ),(2 2 1 ),(3 1 1 ),(3 2 0 ),(5 0 0))\}$$
and add a point each from $(1 0 4)$ and $(4 0 1)$.
This gives an example of the maximal, 124-point solution.  Again, in the $n=10$ case, the set
\begin{align*}
B &= \{(0 0 10),(0 2 8 ),(0 3 7 ),(0 4 6 ),(1 4 5 ),(2 1 7 ),(2 3 5 ), 
(3 2 5 ),(3 3 4 ),(3 4 3 ),(4 4 2 ),(5 1 4 ), \\
&\quad (5 3 2 ),(6 2 2 ), 
(6 3 1 ),(6 4 0 ),(8 1 1 ),(9 0 1 ),(9 1 0 ) \}
\end{align*}

generates the lower bound $c'_{10,3} \geq 24786$ given above (and, up to reflection $a \leftrightarrow c$, is the only such set that does so); but by adding the following twelve elements from $\Gamma_{5,0,5}$ one can increase the lower bound slightly to $24798$: $1111133333$, $1111313333$,
$1113113333$, $1133331113$, $1133331131$, $1133331311$, $3311333111$,
$3313133111$, $3313313111$, $3331111133$, $3331111313$, $3331111331$

A more general form goes with the $B$ set described at the start of this section.  Include points from $\Gamma_{(a,n-i-4,i+4-a)}$ when $a=1 (\operatorname{mod}\ 3)$, subject to no two points being included if they differ by the interchange of a $1$ and a $3$.  Each of these Gamma sets is the feet of a degenerate isosceles triangle with vertex $\Gamma_{(a-1,n-i-2,a+3-a)}$.  

\begin{lemma} If $A$ is a subset of $\Gamma_{(a,b,c)}$ such that no two points of $A$ differ by the interchange of a $1$ and a $3$, then $|A| \leq |\Gamma_{a,b,c}|/(1+\max(a,c))$.
\end{lemma}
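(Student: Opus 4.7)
The plan is to prove the bound by cliques and double counting in the graph $G$ on $\Gamma_{a,b,c}$ whose edges join pairs that differ by a single $1$--$3$ interchange; by hypothesis $A$ is an independent set in $G$, so any clique meets $A$ in at most one vertex. The key observation is that the swap operation preserves the positions of the $2$'s and preserves the multiset of values, so every edge of $G$ stays inside a ``$2$-pattern fibre'' and acts on the remaining $a+c$ coordinates by swapping exactly one $1$ with one $3$.

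The main construction is as follows. For WLOG $c\ge a$, I would define a clique $C$ by specifying
(i) the positions of the $b$ $2$'s, (ii) an $(a-1)$-subset of the remaining $a+c$ positions that will be forced to hold $1$'s, which leaves $c+1$ ``variable'' positions that will hold one $1$ and $c$ $3$'s. The $c+1$ ways to place the unique remaining $1$ among the variable positions give $c+1$ words; any two of them differ by exactly one $1$--$3$ swap (located at the two variable positions where the ``chosen $1$'' differs), so $C$ is a clique of size $c+1$. Let $\mathcal{C}$ denote this family of cliques.

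Next I would double-count incidences between $\Gamma_{a,b,c}$ and $\mathcal{C}$. A word $w\in\Gamma_{a,b,c}$ lies in the clique $C$ iff the $(a-1)$-subset of ``fixed $1$'s'' used to define $C$ is one of the $\binom{a}{a-1}=a$ subsets of the $1$-positions of $w$ (the variable positions then being forced to be the remaining $1$ together with all $c$ $3$'s of $w$). Hence each $w$ lies in exactly $a$ cliques, giving
\[
a\,|\Gamma_{a,b,c}| \;=\; (c+1)\,|\mathcal{C}|.
\]
Because $A$ is independent, $|A\cap C|\le 1$ for every $C\in\mathcal{C}$, so summing over $C$ yields $a\,|A|\le |\mathcal{C}| = \frac{a}{c+1}|\Gamma_{a,b,c}|$, i.e.\ $|A|\le |\Gamma_{a,b,c}|/(c+1)$. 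Running the symmetric construction with the roles of $1$ and $3$ swapped (fix $c-1$ positions to be $3$, vary the last $a+1$) gives $|A|\le |\Gamma_{a,b,c}|/(a+1)$. Taking the better of the two inequalities yields the claim $|A|\le |\Gamma_{a,b,c}|/(1+\max(a,c))$.

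There is no substantial obstacle: once one sees that $(a-1)$ ``frozen'' $1$'s together with the $b$ $2$'s leave exactly the right number of variable coordinates to span a clique of the correct size, the double count is immediate. The only point requiring a small check is the clique property, namely that two such words always differ by a single legal $1$--$3$ swap (which follows because both have the same $2$-positions, the same $a-1$ forced $1$'s, and differ on the variable block only in one $(1,3)$-pair).
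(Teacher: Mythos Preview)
Your argument is correct. The clique construction works exactly as you describe: once the $b$ positions of the $2$'s and $a-1$ ``frozen'' $1$-positions are fixed, the $c+1$ completions form a clique in the swap graph, each vertex lies in exactly $a$ such cliques, and the double count goes through. (Implicitly one needs $\min(a,c)\ge 1$, but the same is true of the paper's argument and of the intended application.)

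Your approach is genuinely different from the paper's. The paper argues by edge-counting in the swap graph: each vertex of $A$ has $ac$ neighbours, all outside $A$, while each vertex outside $A$ has at most $\min(a,c)$ neighbours in $A$; dividing gives $|A|\max(a,c)\le |\Gamma_{a,b,c}|-|A|$. The step ``at most $\min(a,c)$ neighbours in $A$'' is stated without justification in the paper; it rests on the observation that the neighbours of a fixed $w$ are indexed by (1-position, 3-position) pairs, and two such neighbours are themselves adjacent precisely when they share a coordinate, so an independent set of neighbours is a partial matching of size $\le\min(a,c)$. Your clique-cover argument sidesteps this step entirely by exhibiting explicit $(c+1)$-cliques and double-counting incidences, which is more self-contained. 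The paper's version is shorter once one accepts the degree bound; yours makes the combinatorics more transparent and yields the same constant.
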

\begin{proof}
Say that two points of $\Gamma_{a,b,c}$ are neighbours if they differ by the exchange of a $1$ and a $3$.  Each point of $A$ has $ac$ neighbours, none of which are in $A$. Each point of $\Gamma_{(a,b,c)}\backslash A$ has $ac$ neighbours, but only $\min(a,c)$ of them may be in $A$. So for each point of $A$ there are on average $ac/\min(a,c) = \max(a,c)$ points not in $A$. So the proportion of points of $\Gamma_{(a,b,c)}$ that are in $A$ is at most one in $1+\max(a,c)$.
\end{proof}

The proportion of extra points for each of the cells $\Gamma_{(a,n-i-4,i+4-a)}$ is no more than $2/(i+6)$.  Only one cell in three is included from the  $b=n-i-4$ layer, so we expect no more than $\binom{n}{i+4}2^{i+5}/(3i+18)$ new points, all from $S_{n,i+4}$.  One can also find extra points from $S_{n,i+5}$ and higher spheres.

Earlier solutions may also give insight into the problem. Clearly we have 
$c'_{0,3}=1$ and $c'_{1,3}=2$, so we focus on the case $n \ge 2$.
The first lower bounds may be due to Koml\'{o}s \cite{komlos}, who observed 
that the sphere $S_{i,n}$ of elements with exactly $n-i$ 2 entries
(see Section \ref{notation-sec} for definition),
is a Moser set, so that 
\begin{equation}\label{cin}
c'_{n,3}\geq \vert S_{i,n}\vert
\end{equation}
holds for all $i$. Choosing $i=\lfloor \frac{2n}{3}\rfloor$  and 
applying Stirling's formula, we see that this lower bound takes the form \eqref{cpn3} with $C := \sqrt{\frac{9}{4\pi}}$.
In particular $c'_{3,3} \geq 12, c'_{4,3}\geq 24, c'_{5,3}\geq 80, 
c'_{6,3}\geq 240$.

These values can be improved by studying combinations of several spheres or
semispheres or applying elementary results from coding theory.

Observe that if $\{w(1),w(2),w(3)\}$ is a geometric line in $[3]^n$, 
then $w(1), w(3)$ both lie in the same sphere $S_{i,n}$, 
and that $w(2)$ lies in a lower 
sphere $S_{i-r,n}$ for some $1 \leq r \leq i \leq n$. Furthermore, 
$w(1)$ and $w(3)$ are separated by Hamming distance $r$.

As a consequence, we see that $S_{i-1,n} \cup S_{i,n}^e$ (or $S_{i-1,n} 
\cup S_{i,n}^o$) is a Moser set for any $1 \leq i \leq n$, since any two 
distinct elements $S_{i,n}^e$ are separated by a Hamming distance of at 
least two. 
(Recall Section \ref{notation-sec} for definitions),
This leads to the lower bound 
\begin{equation}\label{cn3-low}
c'_{n,3} \geq \binom{n}{i-1} 
2^{i-1} + \binom{n}{i} 2^{i-1} = \binom{n+1}{i} 2^{i-1}.
\end{equation}
It is not 
hard to see that $\binom{n+1}{i+1} 2^{i} > \binom{n+1}{i} 2^{i-1}$ if 
and only if $3i < 2n+1$, and so this lower bound is maximised when $i = 
\lfloor \frac{2n+1}{3} \rfloor$ for $n \geq 2$, giving the formula 
\eqref{binom}. This leads to the lower bounds $$ c'_{2,3} \geq 6; 
c'_{3,3} \geq 16; c'_{4,3} \geq 40; c'_{5,3} \geq 120; c'_{6,3} \geq 
336$$ which gives the right lower bounds for $n=2,3$, but is slightly 
off for $n=4,5$.  Asymptotically, Stirling's formula and \eqref{cn3-low} then
give the lower bound \eqref{cpn3} with $C = \frac{3}{2} \times \sqrt{\frac{9}{4\pi}}$, which is asymptotically $50\%$ better than the bound \eqref{cin}.

The work of Chv\'{a}tal \cite{chvatal1} 
already contained a refinement of this idea
which we here translate into the usual notation of coding theory:
Let $A(n,d)$ denote the size of the largest binary code of length $n$
and minimal distance $d$.

Then 
\begin{equation}\label{cnchvatal}
c'_{n,3}\geq \max_k \left( \sum_{j=0}^k \binom{n}{j} A(n-j, k-j+1)\right).
\end{equation}

The following values of $A(n,d)$ for small $n,d$ are known, see \cite{Brower}:
{\tiny{
\[
\begin{array}{llllllll}
A(1,1)=2&&&&&&&\\
A(2,1)=4& A(2,2)=2&&&&&&\\
A(3,1)=8&A(3,2)=4&A(3,3)=2&&&&&\\
A(4,1)=16&A(4,2)=8& A(4,3)=2& A(4,4)=2&&&&\\
A(5,1)=32&A(5,2)=16& A(5,3)=4& A(5,4)=2&A(5,5)=2&&&\\
A(6,1)=64&A(6,2)=32& A(6,3)=8& A(6,4)=4&A(6,5)=2&A(6,6)=2&&\\
A(7,1)=128&A(7,2)=64& A(7,3)=16& A(7,4)=8&A(7,5)=2&A(7,6)=2&A(7,7)=2&\\
A(8,1)=256&A(8,2)=128& A(8,3)=20& A(8,4)=16&A(8,5)=4&A(8,6)=2
&A(8,7)=2&A(8,8)=2\\
A(9,1)=512&A(9,2)=256& A(9,3)=40& A(9,4)=20&A(9,5)=6&A(9,6)=4
&A(9,7)=2&A(9,8)=2\\
A(10,1)=1024&A(10,2)=512& A(10,3)=72& A(10,4)=40&A(10,5)=12&A(10,6)=6
&A(10,7)=2&A(10,8)=2\\
A(11,1)=2048&A(11,2)=1024& A(11,3)=144& A(11,4)=72&A(11,5)=24&A(11,6)=12
&A(11,7)=2&A(11,8)=2\\
A(12,1)=4096&A(12,2)=2048& A(12,3)=256& A(12,4)=144&A(12,5)=32&A(12,6)=24
&A(12,7)=4&A(12,8)=2\\
A(13,1)=8192&A(13,2)=4096& A(13,3)=512& A(13,4)=256&A(13,5)=64&A(12,6)=32
&A(13,7)=8&A(13,8)=4\\
\end{array}
\]
}}
In addition, one has the general identities $A(n,1)=2^n, A(n,2)=2^{n-1}, A(n-1,2e-1)=A(n,2e)$, and $A(n,d)=2$, 
if $d>\frac{2n}{3}$.

Inserting this data into \eqref{cnchvatal} for $k=2$ we obtain the lower bounds
\[
\begin{array}{llll}
c'_{4,3}&\geq &\binom{4}{0}A(4,3)+\binom{4}{1}A(3,2)+\binom{4}{2}A(2,1)
=1\cdot 2+4 \cdot 4+6\cdot 4&=42\\
c'_{5,3}&\geq &\binom{5}{0}A(5,3)+\binom{5}{1}A(4,2)+\binom{5}{2}A(3,1)
=1\cdot 4+5 \cdot 8+10\cdot 8&=124\\
c'_{6,3}&\geq &\binom{6}{0}A(6,3)+\binom{6}{1}A(5,2)+\binom{6}{2}A(4,1)
=1\cdot 8+6 \cdot 16+15\cdot 16&=344.
\end{array}
\]
Similarly, with $k=3$ we obtain
\[
\begin{array}{llll}
c'_{7,3}&\geq& \binom{7}{0}A(7,4)+\binom{7}{1}A(6,3)+\binom{7}{2}A(5,2)
+ \binom{7}{3}A(4,1)&=960.\\
c'_{8,3}&\geq &\binom{8}{0}A(8,4)+\binom{8}{1}A(7,3)+\binom{8}{2}A(6,2)
+ \binom{8}{3}A(5,1)&=2832\\
c'_{9,3}&\geq & \binom{9}{0}A(9,4)+\binom{9}{1}A(8,3)+\binom{9}{2}A(7,2)
+ \binom{9}{3}A(6,1)&=7880
\end{array}\]
and for $k=4$ we have
\[
\begin{array}{llll}
c'_{10,3}&\geq &\binom{10}{0}A(10,5)+\binom{10}{1}A(9,4)+\binom{10}{2}A(8,3)
+ \binom{10}{3}A(7,2)+\binom{10}{4}A(6,1)&=22232\\
c'_{11,3}&\geq &\binom{11}{0}A(11,5)+\binom{11}{1}A(10,4)+\binom{11}{2}A(9,3)
+ \binom{11}{3}A(8,2)+\binom{11}{4}A(7,1)&=66024\\
c'_{12,3}&\geq &\binom{12}{0}A(12,5)+\binom{12}{1}A(11,4)+\binom{12}{2}A(10,3)
+ \binom{12}{3}A(9,2)+\binom{12}{4}A(8,1)&=188688.\\
\end{array}\]
and for $k=5$ we have
\[ c'_{13,3}\geq 539168.\]

It should be pointed out that these bounds are even numbers, so that 
$c'_{4,3}=43$ shows that one cannot generally expect this lower bound 
to give the optimum.

The maximum value  appears to occur for $k=\lfloor\frac{n+2}{3}\rfloor$, but even after optimising in these parameters and using explicit bounds on $A(n,d)$ we were unable to improve upon the constant $C = 2 \times \sqrt{\frac{9}{4\pi}}$
for \eqref{cpn3} arising from previously discussed constructions.  Using the singleton bound $A(n,d)\leq 2^{n-d+1}$ Chv\'{a}tal   \cite{chvatal1} proved that the expression on the right hand side of
\eqref{cnchvatal} is also 
$O\left( \frac{3^n}{\sqrt{n}}\right)$, so that the refinement described 
above gains a constant factor over the initial construction only.
 
For $n=4$ the above does not yet give the exact value.
The value $c'_{4,3}=43$ was first proven by Chandra \cite{chandra}.
A uniform way of describing examples for the optimum values of 
$c'_{4,3}=43$ and $c'_{5,3}=124$ is as follows.

Let us consider the sets $$ A := S_{i-1,n} 
\cup S_{i,n}^e \cup A'$$ where $A' \subset S_{i+1,n}$ has the property 
that any two elements in $A'$ are separated by a Hamming distance of at 
least three, or have a Hamming distance of exactly one but their 
midpoint lies in $S_{i,n}^o$. By the previous discussion we see that 
this is a Moser set, and we have the lower bound 
\begin{equation}\label{cnn}
c'_{n,3} \geq \binom{n+1}{i} 2^{i-1} + |A'|.
\end{equation} This gives some improved lower bounds for $c'_{n,3}$:

\begin{itemize} \item By taking $n=4$, $i=3$, and $A' = \{ 1111, 3331, 
3333\}$, we obtain $c'_{4,3} \geq 43$; \item By taking $n=5$, $i=4$, and 
$A' = \{ 11111, 11333, 33311, 33331 \}$, we obtain $c'_{5,3} \geq 124$. 
\item By taking $n=6$, $i=5$, and $A' = \{ 111111, 111113, 111331, 
111333, 331111, 331113\}$, we obtain $c'_{6,3} \geq 342$. \end{itemize}

This gives the lower bounds in Theorem \ref{moser} up to $n=5$, but 
the bound for $n=6$ is inferior to the lower bound $c'_{6,3}\geq 344$ given above.

\subsection{Higher $k$ values}

We now consider lower bounds for $c'_{n,k}$ for some values of $k$ larger than $3$.  Here we will see some further connections between the Moser problem and the density Hales-Jewett problem.

For $k=4$, we have the lower bounds $c'_{n,4} \ge \binom{n}{n/2}2^n$.  To see this, observe that the set of points with $a$ $1$s,$b$ $2$s,$c$ $3$s and $d$ $4$s, where $a+d$ has the constant value $n/2$, does not form geometric lines because points at the ends of a geometric line have more $a$ or $d$ values than points in the middle of the line.

The following lower bound is asymptotically twice as large.  Take all points with $a$ $1$s, $b$ $2$s, $c$ $3$s and $d$ $4$s, for which:

\begin{itemize}
\item Either $a+d = q$ or $q-1$, $a$ and $b$ have the same parity; or
\item $a+d = q-2$ or $q-3$, $a$ and $b$ have opposite parity.
\end{itemize}

This includes half the points of four adjacent layers, and therefore may include $(1+o(1))\binom{n}{n/2}2^{n+1}$ points.

We also have a lower bound for $c'_{n,5}$ similar to Theorem \ref{dhj-lower}, namely $c'_{n,5} = 5^{n-O(\sqrt{\log n})}$. Consider points with $a$ $1$s, $b$ $2$s, $c$ $3$s, $d$ $4$s and $e$ $5$s.  For each point, take the value $a+e+2(b+d)+3c$.  The first three points in any geometric line give values that form an arithmetic progression of length three.  

Select a set of integers with no arithmetic progression of length $3$.  Select all points whose value belongs to that sequence; there will be no geometric line among those points.  By the Behrend construction\cite{behrend}, it is possible to choose these points with density $\exp{-O(\sqrt{\log n})}$.

For $k=6$, we observe that the asymptotic $c'_{n,6} = o(6^n)$ would imply the $k=3$ density Hales-Jewett theorem $c_{n,3}=o(3^n)$. Indeed, any $k=3$ combinatorial line-free set can be ``doubled up'' into a $k=6$ geometric line-free set of the same density by pulling back the set from the map that maps $1, 2, 3, 4, 5, 6$ to $1, 2, 3, 3, 2, 1$ respectively; note that this map sends $k=6$ geometric lines to $k=3$ combinatorial lines.  So $c'_{n,6} \geq 2^n c_{n,3}$, and more generally, $c'_{n,2k} \geq 2^n c_{n,k}$.

\section{Upper bounds for the $k=3$ Moser problem in small dimensions}\label{moser-upper-sec}

In this section we finish the proof of Theorem \ref{moser} by obtaining the upper bounds on  $c'_{n,3}$ for $n \leq 6$.

\subsection{Statistics, densities and slices}

Our analysis will revolve around various \emph{statistics} of Moser sets $A \subset [3]^n$, their associated \emph{densities}, and the behavior of such statistics and densities with respect to the operation of passing from the cube $[3]^n$ to various \emph{slices} of that cube.

\begin{definition}[Statistics and densities]  Let $A \subset [3]^n$ be a set.  For any $0 \leq i \leq n$, set $a_i(A) := |A \cap S_{n-i,n}|$; thus we have
$$ 0 \leq a_i(A) \leq |S_{n-i,n}| = \binom{n}{i} 2^{n-i}$$
for $0 \leq i \leq n$ and
$$ a_0(A) + \ldots + a_n(A) = |A|.$$
We refer to the vector $(a_0(A),\ldots,a_n(A))$ as the \emph{statistics} of $A$.  We define the $i^{th}$ \emph{density} $\alpha_i(A)$ to be the quantity
$$ \alpha_i(A) := \frac{a_i(A) }{\binom{n}{i} 2^{n-i}},$$
thus $0 \leq \alpha_i(A) \leq 1$ and
$$ |A| = \sum_{i=0}^n \binom{n}{i} 2^{n-i} \alpha_i(A).$$
\end{definition}

\begin{example}\label{2mos}  Let $n=2$ and $A$ be the Moser set $A := \{ 12, 13, 21, 23, 31, 32 \}$.  Then the statistics $(a_0(A), a_1(A), a_2(A))$ of $A$ are $(2,4,0)$, and the densities $(\alpha_0(A), \alpha_1(A), \alpha_2(A))$ are $(\frac{1}{2}, 1, 0)$.  
\end{example}

When working with small values of $n$, it will be convenient to write $a(A)$, $b(A)$, $c(A)$, etc. for $a_0(A)$, $a_1(A)$, $a_2(A)$, etc., and similarly write $\alpha(A), \beta(A), \gamma(A)$, etc. for $\alpha_0(A)$, $\alpha_1(A)$, $\alpha_2(A)$, etc.  Thus for instance in Example \ref{2mos} we have $b(A) = 4$ and $\alpha(A) = \frac{1}{2}$.

\begin{definition}[Subspace statistics and densities]  If $V$ is a $k$-dimensional geometric subspace of $[3]^n$, then we have a map $\phi_V: [3]^k \to [3]^n$ from the $k$-dimensional cube to the $n$-dimensional cube.  If $A \subset [3]^n$ is a set and $0 \leq i \leq k$, we write $a_i(V,A)$ for $a_i(\phi_V^{-1}(A))$ and $\alpha_i(V,A)$ for $\alpha_i(\phi_V^{-1}(A))$.  If the set $A$ is clear from context, we abbreviate $a_i(V,A)$ as $a_i(V)$ and $\alpha_i(V,A)$ as $\alpha_i(V)$.
\end{definition}

Recall from Section \ref{notation-sec} that the cube $[3]^n$ can be subdivided into three \emph{slices} in $n$ different ways, and each slice is an $n-1$-dimensional subspace.  For instance, $[3]^3$ can be partitioned into $1**$, $2**$, $3**$.  We call a slice a \emph{centre slice} if the fixed coordinate is $2$ and a \emph{side slice} if it is $1$ or $3$.
 
\begin{example}  We continue Example \ref{2mos}.  Then the statistics of the side slice $1*$ are $(a(1*),b(1*)) = (1,1)$, while the statistics of the centre slice $2*$ are $(a(2*),b(2*))=(2,0)$.  The corresponding densities are $(\alpha(1*),\beta(1*)) = (1/2,1)$ and $(\alpha(2*),\beta(2*))=(1,0)$.
\end{example}

A simple double counting argument gives the following useful identity:

\begin{lemma}[Double counting identity]\label{dci}  Let $A \subset [3]^n$ and $0 \leq i \leq n-1$.  Then we have
$$ \frac{1}{n-i-1} \sum_{V \hbox{ a side slice}} a_{i+1}(V) = \frac{1}{i+1} \sum_{W \hbox{ a centre slice}} a_i(W) = a_{i+1}(A)$$
where $V$ ranges over the $2n$ side slices of $[3]^n$, and $W$ ranges over the $n$ centre slices.  In other words, the average value of $\alpha_{i+1}(V)$ for side slices $V$ equals the average value of $\alpha_i(W)$ for centre slices $W$, which is in turn equal to $\alpha_{i+1}(A)$.
\end{lemma}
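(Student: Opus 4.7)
The plan is to prove Lemma \ref{dci} by a straightforward double counting of incidences between points of $A$ lying in a specified sphere and the slices through those points. Specifically, we count the set of pairs $(w, V)$ where $w \in A \cap S_{n-(i+1),n}$ (that is, $w$ has exactly $i+1$ twos) and $V$ is a particular kind of slice containing $w$.

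First I would do the side slice count. Fix $w \in A$ with exactly $i+1$ twos, and hence $n-(i+1)=n-i-1$ coordinates equal to $1$ or $3$. A side slice is determined by a coordinate position and a fixed value from $\{1,3\}$, and $w$ lies in this slice exactly when the fixed value matches the letter of $w$ at that position; so the number of side slices $V$ containing $w$ is precisely $n-i-1$, one for each non-$2$ coordinate of $w$. Under the identification $\phi_V : [3]^{n-1} \to V$, the preimage $\phi_V^{-1}(w)$ has the same number of twos as $w$, namely $i+1$, now in a word of length $n-1$; thus $\phi_V^{-1}(w) \in S_{(n-1)-(i+1),\,n-1}$ and so contributes $1$ to $a_{i+1}(V)$. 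Summing over all $w \in A \cap S_{n-(i+1),n}$ and over all side slices $V$ gives
\[
\sum_{V \text{ a side slice}} a_{i+1}(V) \;=\; (n-i-1)\, a_{i+1}(A),
\]
which is the first claimed equality.

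Next I would perform the analogous count for centre slices. A centre slice is determined by a coordinate position (with fixed value $2$), and $w$ lies in this slice exactly when the corresponding coordinate of $w$ equals $2$; there are $i+1$ such slices. In the identification $\phi_W : [3]^{n-1} \to W$, removing a $2$-coordinate leaves a length $n-1$ word with $i$ twos, so $\phi_W^{-1}(w) \in S_{(n-1)-i,\,n-1}$ and contributes $1$ to $a_i(W)$. Summing gives
\[
\sum_{W \text{ a centre slice}} a_i(W) \;=\; (i+1)\, a_{i+1}(A),
\]
and dividing the two identities by $n-i-1$ and $i+1$ respectively yields the lemma. There is essentially no obstacle here beyond being careful with the index conventions: the subscript of $a_i$ counts the number of $2$'s, so passing to a centre slice decreases this count by one while passing to a side slice preserves it, which is exactly what produces the asymmetry between the two averaging factors $(n-i-1)^{-1}$ and $(i+1)^{-1}$ (for $i = n-1$ the first identity degenerates to $0 = 0$, consistent with the convention that the all-$2$'s word lies in no side slice).
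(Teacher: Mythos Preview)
Your proof is correct and follows the same double counting argument as the paper: each word in $A \cap S_{n-i-1,n}$ lies in exactly $n-i-1$ side slices (contributing to $a_{i+1}(V)$) and exactly $i+1$ centre slices (contributing to $a_i(W)$). The paper states this in one sentence, while you have spelled out the bijection and index bookkeeping in full, including the harmless degenerate case $i=n-1$.
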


Indeed, this lemma follows from the observation that every string in $A \cap S_{n-i-1,n}$ belongs to $i+1$ centre slices $W$ (and contributes to $a_i(W)$) and to $n-i-1$ side slices $V$ (and contributes to $a_{i+1}(V)$).  One can also view this lemma probabilistically, as the assertion that there are three equivalent ways to generate a random string of length $n$:

\begin{itemize}
\item Pick a side slice $V$ at random, and randomly fill in the wildcards in such a way that $i+1$ of the wildcards are $2$'s (i.e. using an element of $S_{n-i-2,n-1}$).
\item Pick a centre slice $V$ at random, and randomly fill in the wildcards in such a way that $i$ of the wildcards are $2$'s (i.e. using an element of $S_{n-i-1,n-1}$).
\item Randomly choose an element of $S_{n-i-1,n}$.
\end{itemize}

\begin{example} We continue Example \ref{2mos}.  The average value of $\beta$ for side slices is equal to the average value of $\alpha$ for centre slices, which is equal to $\beta(A) = 1$.
\end{example}

Another very useful fact (essentially due to \cite{chvatal2}) is that linear inequalities for statistics of Moser sets at one dimension propagate to linear inequalities in higher dimensions:

\begin{lemma}[Propagation lemma]\label{prop}  Let $n \geq 1$ be an integer.  Suppose one has a linear inequality of the form
\begin{equation}\label{alphav}
 \sum_{i=0}^n v_i \alpha_i(A) \leq s
\end{equation}
for all Moser sets $A \subset [3]^n$ and some real numbers $v_0,\ldots,v_n,s$.  Then we also have the linear inequality
$$ \sum_{i=0}^n v_i \alpha_{qi+r}(A) \leq s$$
whenever $q \geq 1$, $r \geq 0$, $N \geq nq+r$ are integers and $A \subset [3]^N$ is a Moser set.
\end{lemma}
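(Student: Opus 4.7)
The plan is to deduce the $N$-dimensional inequality from the given $n$-dimensional inequality by averaging it, applied to $\phi_V^{-1}(A)$, over a suitable random family of $n$-dimensional geometric subspaces $V \subset [3]^N$. The design requirement is that $\phi_V$ should carry a point of $[3]^n$ with exactly $i$ twos to a point of $[3]^N$ with exactly $qi+r$ twos, so that the sphere density $\alpha_i(V,A)$ in dimension $n$ averages to the sphere density $\alpha_{qi+r}(A)$ in dimension $N$. To achieve this, I would choose $V$ by first picking a uniformly random labeled partition of the $N$ coordinates into blocks $G_1,\dots,G_n$ of size $q$ each, a block $F_2$ of size $r$, and a block $F_\star$ of size $N-qn-r$ (which is non-negative by the hypothesis $N\geq nq+r$); then independently at each position of each $G_j$ place either $x_j$ or $\overline{x_j}$ uniformly at random, at each position of $F_2$ place the letter $2$, and at each position of $F_\star$ independently place $1$ or $3$ uniformly at random. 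Since $q\geq 1$, each block $G_j$ contains at least one of $\{x_j,\overline{x_j}\}$, so $V$ is a valid $n$-dimensional geometric subspace of $[3]^N$.

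For any $\vec a \in [3]^n$ with exactly $i$ twos, the point $\phi_V(\vec a)$ has a $2$ precisely at the positions in $\big(\bigcup_{j:a_j=2}G_j\big) \cup F_2$, a set of size $qi+r$. A short symmetry check then shows that $\phi_V(\vec a)$ is uniformly distributed on the sphere $S_{N-qi-r,N}$: the random labeled partition makes the set of $2$-positions uniform over all $\binom{N}{qi+r}$ subsets of $[N]$, while the independent uniform choices of wildcard orientation and of fixed non-two entries make the remaining coordinates independent and uniform on $\{1,3\}$ (the key point being that when $a_j=2$ both $x_j$ and $\overline{x_j}$ evaluate to $2$, so the orientations affect only the non-two coordinates). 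Consequently
\[
\E[\alpha_i(V,A)] \;=\; \frac{1}{|S_{n-i,n}|}\sum_{\vec a \in S_{n-i,n}}\Pr[\phi_V(\vec a)\in A] \;=\; \alpha_{qi+r}(A).
\]

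Because geometric lines in $[3]^n$ push forward to geometric lines in $[3]^N$ under the injection $\phi_V$, the pullback $\phi_V^{-1}(A)$ is automatically a Moser set in $[3]^n$; applying the hypothesis \eqref{alphav} to it therefore yields $\sum_{i=0}^n v_i \alpha_i(V,A) \leq s$ for every realization of $V$. Taking expectations and using linearity then produces the desired inequality $\sum_{i=0}^n v_i \alpha_{qi+r}(A) \leq s$. The main technical step will be verifying the uniform distribution of $\phi_V(\vec a)$ on $S_{N-qi-r,N}$; this is exactly where the parameters $|G_j|=q$ and $|F_2|=r$ are matched to the target index $qi+r$, and where the independent randomization of $x_j$ versus $\overline{x_j}$ is essential for getting a uniform distribution both on the positions of the $2$'s and on the internal $\{1,3\}$-structure of the non-two coordinates.
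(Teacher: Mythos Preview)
Your proof is correct and follows essentially the same averaging-over-random-geometric-subspaces strategy as the paper; the only cosmetic difference is that the paper handles the $N-nq$ non-wildcard coordinates by drawing a single uniform element of $S_{N-nq-r,N-nq}$ (rather than pre-splitting them into an all-$2$ block $F_2$ and a $\{1,3\}$-block $F_\star$), and then justifies the uniformity of $\phi_V(\vec a)$ on $S_{N-qi-r,N}$ by invoking invariance under the full geometric symmetry group of $[3]^N$ rather than by your explicit position/entry decomposition. Both routes yield the same distribution on $V$ and the same conclusion.
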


\begin{proof}  We run a probabilistic argument (one could of course also use a double counting argument instead).  Let $n,v_0,\ldots,v_n,s,q,r,N,A$ be as in the lemma.  Let $V$ be a random $n$-dimensional geometric subspace of $[3]^N$, created in the following fashion:
\begin{itemize}
\item Pick $n$ wildcards $x_1,\ldots,x_n$ to run independently from $1$ to $3$.  We also introduce dual wildcards $\overline{x_1},\ldots,\overline{x_n}$; each $\overline{x_j}$ will take the value $4-x_j$.
\item We randomly subdivide the $N$ coordinates into $n$ groups of $q$ coordinates, plus a remaining group of $N-nq$ ``fixed'' coordinates.
\item For each coordinate in the $j^{th}$ group of $q$ coordinates for $1 \leq j \leq n$, we randomly assign either a $x_j$ or $\overline{x_j}$.
\item For each coordinate in the $N-nq$ fixed coordinates, we randomly assign a digit $1,2,3$, but condition on the event that exactly $r$ of the digits are equal to $2$ (i.e. we use a random element of $S_{N-nq-r,N-nq}$).
\item Let $V$ be the subspace created by allowing $x_1,\ldots,x_n$ to run independently from $1$ to $3$, and $\overline{x_j}$ to take the value $4-x_j$.
\end{itemize}

For instance, if $n=2, q=2, r=1, N=6$, then a typical subspace $V$ generated in this fashion is
$$ 2x_1\overline{x_2}3x_2x_1 = \{ 213311, 212321, 211331, 223312, 222322, 221332, 233313, 232323, 231333\}.$$
Observe from that the following two ways to generate a random element of $[3]^N$ are equivalent:

\begin{itemize}
\item Pick $V$ randomly as above, and then assign $(x_1,\ldots,x_n)$ randomly from $S_{n-i,n}$.  Assign $4-x_j$ to $\overline{x_j}$ for all $1 \leq j \leq n$.
\item Pick a random string in $S_{N-qi-r,N}$.
\end{itemize}

Indeed, both random variables are invariant under the symmetries of the cube, and both random variables always pick out strings in $S_{N-qi-r,N}$, and the claim follows.  As a consequence, we see that the expectation of $\alpha_i(V)$ (as $V$ ranges over the recipe described above) is equal to $\alpha_{qi+r}(A)$.  On the other hand, from \eqref{alphav} we have
$$  \sum_{i=0}^n v_i \alpha_i(V) \leq s$$
for all such $V$; taking expectations over $V$, we obtain the claim.
\end{proof}

In view of Lemma \ref{prop}, it is of interest to locate linear inequalities relating the densities $\alpha_i(A)$, or (equivalently) the statistics $a_i(A)$.  For this, it is convenient to introduce the following notation.

\begin{definition}  Let $n \geq 1$ be an integer.  
\begin{itemize}
\item A vector $(a_0,\ldots,a_n)$ of non-negative integers is \emph{feasible} if it is the statistics of some Moser set $A$.
\item A feasible vector $(a_0,\ldots,a_n)$ is \emph{Pareto-optimal} if there is no other feasible vector $(b_0,\ldots,b_n) \neq (a_0,\ldots,a_n)$ such that $b_i \geq a_i$ for all $0 \leq i \leq n$.
\item A Pareto-optimal vector $(a_0,\ldots,a_n)$ is \emph{extremal} if it is not a non-trivial convex linear combination of other Pareto-optimal vectors.
\end{itemize}
\end{definition}

To establish a linear inequality of the form \eqref{alphav} with the $v_i$ non-negative, it suffices to test the inequality against densities associated to extremal vectors of statistics.  (There is no point considering linear inequalities with negative coefficients $v_i$, since one always has the freedom to reduce a density $\alpha_i(A)$ of a Moser set $A$ to zero, simply by removing all elements of $A$ with exactly $i$ $2$'s.)

We will classify exactly the Pareto-optimal and extremal vectors for $n \leq 3$, which by Lemma \ref{prop} will lead to useful linear inequalities for $n \geq 4$.  Using a computer, we have also located a partial list of Pareto-optimal and extremal vectors for $n=4$, which are also useful for the $n=5$ and $n=6$ theory.

\subsection{Up to three dimensions}

We now establish Theorem \ref{moser} for $n \leq 3$, and establish some auxiliary inequalities which will be of use in higher dimensions.

The case $n=0$ is trivial. When $n=1$, it is clear that $c'_{1,3} = 2$, and furthermore that the Pareto-optimal statistics are $(2,0)$ and $(1,1)$, which are both extremal.  This leads to the linear inequality
$$ 2\alpha(A) + \beta(A) \leq 2$$
for all Moser sets $A \subset [3]^1$, which by Lemma \ref{prop} implies that
\begin{equation}\label{alpha-1}
2\alpha_r(A) + \alpha_{r+q}(A) \leq 2
\end{equation}
whenever $r \geq 0$, $q \geq 1$, $n \geq q+r$, and $A \subset [3]^n$ is a Moser set.

For $n=2$, we see by partitioning $[3]^2$ into three slices that $c'_{2,3} \leq 3 c'_{1,3} = 6$, and so (by the lower bounds in the previous section) $c'_{2,3} = 6$.  Writing $(a,b,c) = (a(A),b(A),c(A)) = (4\alpha(A), 4\beta(A), \gamma(A))$, the inequalities \eqref{alpha-1} become
\begin{equation}\label{abc}
a + 2c \leq 4; b+2c \leq 4; 2a+b <= 8.
\end{equation}

\begin{lemma}  When $n=2$, the Pareto-optimal statistics are $(4,0,0), (3,2,0), (2,4,0), (2,2,1)$. In particular, the extremal statistics are $(4,0,0), (2,4,0), (2,2,1)$.
\end{lemma}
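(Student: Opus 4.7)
The plan is to combine the three linear inequalities \eqref{abc}, which hold for every Moser set $A\subset[3]^2$ as consequences of \eqref{alpha-1}, with explicit constructions witnessing each candidate statistic. Since $c\in\{0,1\}$, I would split into two cases. When $c=1$, the constraints $a+2c\leq 4$ and $b+2c\leq 4$ immediately force $a\leq 2$ and $b\leq 2$, leaving $(2,2,1)$ as the only Pareto candidate. When $c=0$, the constraints $a\leq 4$, $b\leq 4$, and $2a+b\leq 8$ cut out a planar region whose integer Pareto-optimal points are exactly $(4,0,0)$, $(3,2,0)$, and $(2,4,0)$; any other integer $(a,b,0)$ either violates $2a+b\leq 8$ or is coordinate-wise dominated by one of these three.

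Next I would exhibit concrete Moser sets realising each of the four candidate vectors, which is necessary because \eqref{abc} only gives necessary conditions. For $(4,0,0)$ take the four corners $\{11,13,31,33\}$; for $(3,2,0)$ take, say, $\{11,33,31,12,23\}$; for $(2,4,0)$ take the set $w=\{12,13,21,23,31,32\}$ appearing in Lemma \ref{2d-ext}; and for $(2,2,1)$ take for instance $\{11,13,21,22,32\}$. Each can be checked to avoid all eight geometric lines of $[3]^2$ listed in Figure \ref{fig-geomline} by direct inspection. A subtlety worth flagging is that among the four $6$-element combinatorial line-free sets $x,y,z,w$ of Lemma \ref{2d-ext}, only $w$ is actually a Moser set; the others contain either the geometric line $\{11,22,33\}$ or $\{13,22,31\}$, so the $(2,4,0)$ extremal must be realised by $w$ specifically.

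Finally, for extremality, the identity $(3,2,0)=\tfrac12(4,0,0)+\tfrac12(2,4,0)$ shows that $(3,2,0)$ is not extremal. Conversely, $(4,0,0)$ is the only Pareto-optimal vector with $a=4$, $(2,4,0)$ the only one with $b=4$, and $(2,2,1)$ the only one with $c=1$, so none of these three can be expressed as a nontrivial convex combination of the remaining Pareto-optimal vectors, and they are extremal. There is no genuine obstacle here beyond a careful case enumeration; the whole argument is driven by how sharply the three inequalities of \eqref{abc} pin down the Pareto frontier once the value of $c\in\{0,1\}$ is fixed.
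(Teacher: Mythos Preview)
Your proof is correct and follows essentially the same approach as the paper: split on $c\in\{0,1\}$, use the inequalities \eqref{abc} to bound the remaining coordinates, and exhibit witnesses. One small factual slip in your parenthetical remark: the set $y=\{11,12,21,23,32,33\}$ from Lemma~\ref{2d-ext} \emph{is} a Moser set (it avoids both diagonals $\{11,22,33\}$ and $\{13,22,31\}$) and also has statistics $(2,4,0)$; only $x$ and $z$ fail to be Moser sets. This does not affect your argument, since $w$ already suffices as a witness for $(2,4,0)$.
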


\begin{proof}  One easily checks that all the statistics listed above are feasible.
Consider the statistics $(a,b,c)$ of a Moser set $A \subset [3]^2$.  $c$ is either equal to $0$ or $1$.  If $c=1$, then \eqref{abc} implies that $a,b \leq 2$, so the only Pareto-optimal statistic here is $(2,2,1)$.  When instead $c=0$, the inequalities \eqref{abc} can easily imply the Pareto-optimality of $(4,0,0), (3,2,0), (2,4,0)$.
\end{proof}

From this lemma we see that we obtain a new inequality $2a+b+2c \leq 8$.  Converting this back to densities and using Lemma \ref{prop}, we conclude that
\begin{equation}\label{alpha-2}
4\alpha_r(A) + 2\alpha_{r+q}(A) + \alpha_{r+2q} \leq 4
\end{equation}
whenever $r \geq 0$, $q \geq 1$, $n \geq q+2r$, and $A \subset [3]^n$ is a Moser set.

The line-free subsets of $[3]^2$ can be easily exhausted by computer search; it turns out that there are $230$ such sets.

Now we look at three dimensions.  Writing $(a,b,c,d)$ for the statistics of a Moser set $A \subset [3]^n$ (which thus range between $(0,0,0,0)$ and $(8,12,6,1)$), the inequalities \eqref{alpha-1} imply in particular that
\begin{equation}\label{abc-3d}
a+4d \leq 8; b+6d \leq 12; c+3d \leq 6; 3a+2c \leq 24; b+c \leq 12
\end{equation}
while \eqref{alpha-2} implies that
\begin{equation}\label{abcd-3d}
3a+b+c \leq 24; b+c+3d \leq 12.
\end{equation}
Summing the inequalities $b+c \leq 12, 3a+b+c \leq 24, b+c+3d \leq 12$ yields
$$ 3(a+b+c+d) \leq 48$$
and hence $|A| = a+b+c+d \leq 16$; comparing this with the lower bounds of the preceding section we obtain $c'_{3,3} = 16$ as required.  (This argument is essentially identical to the one in \cite{chvatal2}).

We have the following useful computation:

\begin{lemma}[3D Pareto-optimals]\label{paretop} When $n=3$, the Pareto-optimal statistics are $$(3,6,3,1),(4,4,3,1),(4,6,2,1),(2,6,6,0),(3,6,5,0),(4,4,5,0),(3,7,4,0),(4,6,4,0),$$ 
$$ (3,9,3,0),(4,7,3,0),(5,4,3,0),(4,9,2,0),(5,6,2,0),(6,3,2,0),(3,10,1,0),(5,7,1,0),$$
$$ (6,4,1,0),(4,12,0,0),(5,9,0,0),(6,6,0,0),(7,3,0,0),(8,0,0,0).$$  
In particular, the extremal statistics are 
$$(3,6,3,1),(4,4,3,1),(4,6,2,1),(2,6,6,0),(4,4,5,0),(4,6,4,0),(4,12,0,0),(8,0,0,0).$$
\end{lemma}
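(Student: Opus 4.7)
The plan is to combine the linear inequalities \eqref{abc-3d} and \eqref{abcd-3d} produced by Lemma \ref{prop} with a direct combinatorial analysis, separating into the two cases $d=0$ and $d=1$. The inequalities cut out a polytope in $\mathbb{R}^4_{\geq 0}$; the feasible statistics lie in this polytope but form a strictly smaller set, so the task splits into (i) enumerating the integer points that are componentwise maximal in the polytope, (ii) deciding which of these are actually realised by a Moser set, and (iii) for those that are not, locating the true Pareto-optima that dominate them in the feasible region.

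For $d=1$, the centre point $222$ lies in $A$, so for each of the thirteen antipodal pairs in $[3]^3 \setminus\{222\}$ (namely $4$ in $S_{3,3}$, $6$ in $S_{2,3}$, $3$ in $S_{1,3}$) at most one member can appear in $A$. This immediately gives $a \leq 4$, $b \leq 6$, $c \leq 3$ and collapses the feasible region to a short list, from which the three stated vectors $(3,6,3,1)$, $(4,4,3,1)$, $(4,6,2,1)$ are read off and realised by appropriate slice of $A_{B_{j,3}}$ augmented by $222$.

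For $d=0$, I would enumerate the integer maxima of the polytope in question. To rule out the infeasible ones and to pin down the genuine Pareto-optima I would use two main tools: the three parallel slicings $[3]^3 = 1{*}{*} \cup 2{*}{*} \cup 3{*}{*}$ (and their permutations), each slice being a Moser set in $[3]^2$ with at most $6$ elements; and the sphere decomposition $[3]^3 = S_{0,3} \cup S_{1,3} \cup S_{2,3} \cup S_{3,3}$ together with the structural fact that a geometric line of length $3$ either has both endpoints on the same sphere $S_{i,3}$ at Hamming distance $r$ and midpoint on $S_{i-r,3}$, or lies within $S_{3,3}$ with midpoint $222$. The feasibility direction is handled by listing explicit realisers, obtained from unions of semispheres $S_{i,3}^o \cup S_{i,3}^e$, from the sets $A_{B_{j,3}}$ minus a vertex, and from a handful of ad hoc symmetric configurations.

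Extremality is then settled by elementary linear algebra: each of the $22$ Pareto-optima is tested to see whether it lies on a segment joining two others in the list, which identifies the $8$ extremal vectors. The main obstacle is the infeasibility direction in the $d=0$ case. For instance, $(4,10,1,0)$ satisfies every inequality in \eqref{abc-3d} and \eqref{abcd-3d} but is not on the list, so a direct combinatorial argument is required to show no Moser set achieves it; one expects to argue via the $n=2$ slice structure, showing that prescribing $b=10$ forces the three parallel $n=2$ slices to have specific $(\alpha,\beta,\gamma)$-profiles which in turn prevent a point of $S_{1,3}$ from being added without creating a geometric line. Carrying out such arguments for each of the roughly thirty candidate vectors is the bulk of the work.
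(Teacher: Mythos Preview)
Your approach is genuinely different from the paper's. The paper establishes this lemma by a brute-force computer search: one loops over the $230$ Moser sets in $[3]^2$ for the $1{*}{*}$ and $3{*}{*}$ slices (using symmetry to halve the work), precomputes for each of the $2^9$ possible ``excluded'' subsets of $2{*}{*}$ the Pareto-optimal completions, and collates. No linear-programming polytope or case analysis is used; the paper merely remarks that a computer-free proof exists on the wiki. Your plan, by contrast, is to cut down the search space with the inequalities \eqref{abc-3d}, \eqref{abcd-3d} and then argue case by case. That is a legitimate and more illuminating route, closer to what the wiki proof presumably does, but what you have written is a sketch rather than a proof and contains two concrete errors.

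First, the $d=1$ case does not collapse as cleanly as you claim. The bounds $a\le 4$, $b\le 6$, $c\le 3$ together with all of \eqref{abc-3d} and \eqref{abcd-3d} still allow the vector $(4,6,3,1)$, which would dominate every vector you list; you must give a separate combinatorial argument to exclude it (and likewise $(4,5,3,1)$). Second, your proposed realisers are wrong: the sets $A_{B_{j,3}}$ are line-free for \emph{combinatorial} lines but are not Moser sets. For instance $A_{B_{0,3}}=xyz$ contains the geometric line $\{113,212,311\}$. So ``slice of $A_{B_{j,3}}$ augmented by $222$'' cannot furnish your examples; you need genuinely different constructions (unions of half-spheres, etc.) for the feasibility direction.

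Beyond these two points, your own diagnosis is accurate: the infeasibility arguments for the $d=0$ candidates (of which you give $(4,10,1,0)$ as an example) are where essentially all the work lies, and none of that work is present here. Either carry out those thirty-odd exclusion arguments, or fall back on the enumeration the paper actually performs.
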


\begin{proof} This can be established by a brute-force search over the $2^{27} \approx 1.3 \times 10^8$ different subsets of $[3]^3$.  Actually, one can perform a much faster search than this.  Firstly, as noted earlier, there are only $230$ line-free subsets of $[3]^2$, so one could search over $230^3 \approx 1.2 \times 10^7$ configurations instead.  Secondly, by symmetry we may assume (after enumerating the $230$ sets in a suitable fashion) that the first slice $A \cap 1**$ has an index less than or equal to the third $A \cap 3**$, leading to $\binom{231}{2} \times 230 \approx 6 \times 10^6$ configurations instead.  Finally, using the first and third slice one can quickly determine which elements of the second slice $2**$ are prohibited from $A$.  There are $2^9 = 512$ possible choices for the prohibited set in $2**$.  By crosschecking these against the list of $230$ line-free sets one can compute the Pareto-optimal statistics for the second slices inside the prohibited set (the lists of such statistics turns out to length at most $23$).  Storing these statistics in a lookup table, and then running over all choices of the first and third slice (using symmetry), one now has to perform $O( 512 \times 230 ) + O( \binom{231}{2} \times 23) \approx O( 10^6 )$ computations, which is quite a feasible computation.

One could in principle reduce the computations even further, by a factor of up to $8$, by using the symmetry group $D_4$ of the square $[3]^2$ to reduce the number of cases one needs to consider, but we did not implement this.

A computer-free proof of this lemma can be found at the page {\tt Human proof of the 3D Pareto-optimal Moser statistics} at \cite{polywiki}.
\end{proof}

\begin{remark} A similar computation revealed that the total number of line-free subsets of $[3]^3$ was $3813884$.  With respect to the $2^3 \times 3!=48$-element group of geometric symmetries of $[3]^3$, these sets partitioned into $83158$ equivalence classes:
$$
3813884 = 76066 \times 48+6527 \times 24+51 \times 16+338 \times 12 +109 \times 8+41 \times 6+13 \times 4 +5 \times 3+3 \times 2+5 \times 1.
$$
\end{remark}

Lemma \ref{paretop} yields the following new inequalities:
\begin{align*}
2a+b+2c+4d &\leq 22 \\
3a+2b+3c+6d &\leq 36 \\
7a+2b+4c+8d &\leq 56 \\
6a+2b+3c+6d &\leq 48 \\
a+2c+4d &\leq 14 \\
5a+4c+8d &\leq 40.
\end{align*}

Applying Lemma \ref{prop}, we obtain new inequalities:
\begin{align}
8\alpha_r(A)+ 6\alpha_{r+q}(A) + 6\alpha_{r+2q}(A) + 2\alpha_{r+3q}(A) &\leq 11 \label{eleven}\\
4\alpha_r(A)+4\alpha_{r+q}(A)+3\alpha_{r+2q}(A)+\alpha_{r+3q}(A) &\leq 6\label{six}\\
7\alpha_r(A)+3\alpha_{r+q}(A)+3\alpha_{r+2q}(A)+\alpha_{r+3q}(A) &\leq 7\nonumber\\
8\alpha_r(A)+3\alpha_{r+q}(A)+3\alpha_{r+2q}(A)+\alpha_{r+3q}(A) &\leq 8\label{eight}\\
4\alpha_{r+q}(A)+2\alpha_{r+2q}(A)+\alpha_{r+3q}(A) &\leq 4\nonumber\\
4\alpha_r(A)+6\alpha_{r+2q}(A)+2\alpha_{r+3q}(A) &\leq 7\nonumber\\
5\alpha_r(A)+3\alpha_{r+2q}(A)+\alpha_{r+3q}(A) &\leq 5\nonumber 
\end{align}
whenever $r \geq 0$, $q \geq 1$, $n \geq r+3q$, and Moser sets $A \subset [3]^n$.

We also note some further corollaries of Lemma \ref{paretop}:

\begin{corollary}[Statistics of large 3D Moser sets]\label{paretop2}  Let $(a,b,c,d)$ be the statistics of a Moser set $A$ in $[3]^3$.  Then $|A| = a+b+c+d \leq 16$.  Furthermore:
\begin{itemize}
\item If $|A|=16$, then $(a,b,c,d) = (4,12,0,0)$.
\item If $|A|=15$, then $(a,b,c,d) = (4,11,0,0)$ or $(3,12,0,0)$.
\item If $|A| \geq 14$, then $b \geq 6$ and $d=0$.
\item If $|A| = 13$ and $d=1$, then $(a,b,c,d) = (4,6,2,1)$ or $(3,6,3,1)$.
\end{itemize}
\end{corollary}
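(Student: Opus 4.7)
The plan is to derive the entire corollary as a direct consequence of Lemma~\ref{paretop}: every feasible statistic $(a,b,c,d)$ is dominated componentwise by at least one Pareto-optimal statistic in the list there, so for each target size $|A|$ I only need to scan the list for Pareto-optima of sum at least $|A|$ and then enumerate the componentwise sub-vectors of each whose coordinates add up to exactly $|A|$. The bound $|A|\leq 16$ has already been proved immediately before the corollary statement, by summing the three inequalities $b+c\leq 12$, $3a+b+c\leq 24$, $b+c+3d\leq 12$ coming from \eqref{abc-3d}, \eqref{abcd-3d}.

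For $|A|=16$ the only Pareto-optimum whose coordinates sum to at least $16$ is $(4,12,0,0)$, so the vector must be $(4,12,0,0)$ itself. For $|A|=15$ I would list the Pareto-optima of sum $\geq 15$ and, for each, write out the componentwise sub-vectors of sum $15$; this produces $(3,12,0,0)$ and $(4,11,0,0)$ as the only new candidates coming from $(4,12,0,0)$. For $|A|\geq 14$ I would scan the Pareto-optima of sum $\geq 14$ and verify that each has $d'=0$ and $b'\geq 6$; the $d=0$ conclusion transfers automatically to any componentwise sub-vector, and a short coordinatewise check (subtracting at most $b'-6$ from $b'$ forces one to subtract more than is available from the other coordinates to keep the sum $\geq 14$) shows that $b\geq 6$ is preserved in every relevant sub-vector. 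For $|A|=13$ with $d=1$, only the three Pareto-optima with $d'=1$, namely $(3,6,3,1)$, $(4,4,3,1)$, $(4,6,2,1)$, are relevant; their sums are $13$, $12$, $13$, so the $(4,4,3,1)$ branch is ruled out by the size constraint, and the remaining two candidates have sum exactly $13$ and admit no proper sub-vector of sum $13$, giving exactly $(3,6,3,1)$ and $(4,6,2,1)$.

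The only obstacle is the bookkeeping: for each Pareto-optimal vector with sum exceeding $|A|$ by a small amount, one must enumerate all ways to reduce the coordinates to bring the sum down to $|A|$ without missing a case. This is mechanical but needs care, especially for the $|A|=15$ case where $(4,12,0,0)$ has ``slack'' $1$ and for the $|A|\geq 14$ case where several Pareto-optima with slack up to $2$ need to be checked. No further structural argument beyond Lemma~\ref{paretop} is required.
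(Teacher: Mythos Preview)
Your approach is exactly the paper's: the corollary is stated immediately after Lemma~\ref{paretop} with the phrase ``We also note some further corollaries of Lemma~\ref{paretop}'' and no separate proof, so reading off the feasible statistics of each target size as componentwise sub-vectors of the Pareto-optima is precisely what is intended. Your treatment of the cases $|A|=16$, $|A|\geq 14$, and $|A|=13$ with $d=1$ is correct (for the third bullet, note that every Pareto-optimum with sum $\geq 14$ has $d'=0$ and $a'+c'\leq 8$, so any sub-vector of sum $\geq 14$ has $d=0$ and $b\geq 14-a'-c'\geq 6$).

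There is, however, a genuine gap in your $|A|=15$ analysis. You only trace candidates from $(4,12,0,0)$, but Lemma~\ref{paretop} lists two further Pareto-optima whose coordinates sum to exactly $15$, namely $(3,9,3,0)$ and $(4,9,2,0)$. A correct execution of your plan therefore produces (at least) four feasible statistics for $15$-point Moser sets, not two. In other words, the second bullet of the corollary is \emph{false as stated}: $(3,9,3,0)$ and $(4,9,2,0)$ are themselves statistics of $15$-element Moser sets, so your method cannot establish that bullet --- not because the method is wrong, but because the claim is. (This does not damage anything later in the paper, which never invokes the $|A|=15$ statement.)
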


\subsection{Four dimensions}

Now we establish the bound $c'_{4,3}=43$.  Let $A$ be a Moser set in $[3]^4$, with attendant statistics $(a,b,c,d,e)$, which range between $(0,0,0,0,0)$ and $(16,32,24,8,1)$.  In view of the lower bounds, our task here is to establish the upper bound $a+b+c+d+e \leq 43$.

The linear inequalities already established just barely fail to achieve this bound, but we can obtain the upper bound $a+b+c+d+e \leq 44$ as follows.
First suppose that $e=1$; then from the inequalities \eqref{alpha-1} (or by considering lines passing through $2222$) we see that $a \leq 8, b \leq 16, c \leq 12, d \leq 4$ and hence $a+b+c+d+e \leq 41$, so we may assume that $e=0$.

From Lemma \ref{dci}, we see that $a+b+c+d+e$ is now equal to the sum of $a(V)/4+b(V)/3+c(V)/2+d(V)$, where $V$ ranges over all side slices of $[3]^4$.  But from Lemma \ref{paretop} we see that $a(V)/4+b(V)/3+c(V)/2+d(V)$ is at most $\frac{11}{4}$, with equality occurring only when $(a(V),b(V),c(V),d(V))=(2,6,6,0)$.  This gives the upper bound $a+b+c+d+e \leq 44$.

The above argument shows that $a+b+c+d+e=44$ can only occur if $e=0$ and if $(a(V),b(V),c(V),d(V))=(2,6,6,0)$ for all side slices $V$.  Applying Lemma \ref{paretop} again this implies $(a,b,c,d,e)=(4,16,24,0,0)$.  But then $A$ contains all of the sphere $S_{2,4}$, which implies that the four-element set $A \cap S_{4,4}$ cannot contain a pair of strings which differ in exactly two positions (as their midpoint would then lie in $S_{2,4}$, contradicting the hypothesis that $A$ is a Moser set).  

Recall that we may partition $S_{4,4} = S_{4,4}^e \cup S_{4,4}^o$, where 
$$S_{4,4}^e := \{ 1111, 1133, 1313, 3113, 1331, 3131, 3311, 3333\}$$
is the strings in $S_{4,4}$ with an even number of $1$'s, and 
$$S_{4,4}^o := \{ 1113, 1131, 1311, 3111, 1333, 3133, 3313, 3331\}$$
are the strings in $S_{4,4}$ with an odd number.  Observe that any two distinct elements in $S_{4,4}^e$ differ in exactly two positions unless they are antipodal.  Thus $A \cap S_{4,4}^e$ has size at most two, with equality only when $A \cap S_{4,4}^e$ consists of an antipodal pair.  Similarly for $A \cap S_{4,4}^o$.  Thus $A$ must consist of two antipodal pairs, one from $S_{4,4}^e$ and one from $S_{4,4}^o$.

By the symmetries of the cube we may assume without loss of generality that these pairs are $\{ 1111, 3333\}$ and $\{1113,3331\}$ respectively.  But as $A$ is a Moser set, $A$ must now exclude the strings $1112$ and $3332$.  These two strings form two corners of the eight-element set
$$ ***2 \cap S_{3,4} = \{ 1112, 1132, 1312, 3112, 1332, 3132, 3312, 3332 \}.$$
Any pair of points in this set which are ``adjacent'' in the sense that they differ by exactly one entry cannot both lie in $A$, as their midpoint would then lie in $S_{3,4}$, and so $A$ can contain at most four elements from this set, with equality only if $A$ contains all the points in $***2 \cap S_{3,4}$ of the same parity (either all the elements with an even number of $3$s, or all the elements with an odd number of $3$s).  But because the two corners removed from this set have the opposite parity (one has an even number of $1$s and one has an odd number), we see in fact that $A$ can contain at most $3$ points from this set.  Meanwhile, the same arguments give that $A$ contains at most four points from $**2* \cap S_{3,4}$, $*2** \cap S_{3,4}$, and $2*** \cap S_{3,4}$.  Summing we see that $b = |A \cap S_{3,4}| \leq 3+4+4+4=15$, a contradiction.  Thus we have $c'_{4,3}=43$ as claimed.

We have the following four-dimensional version of Lemma \ref{paretop}:

\begin{lemma}[4D Pareto-optimals]\label{paretop-4} When $n=4$, the Pareto-optimal statistics are given by the table in Figure \ref{4d-pareto}.
\end{lemma}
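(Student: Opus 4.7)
The plan is to mirror the approach of Lemma \ref{paretop}, but with the larger search space of $[3]^4$ and a more aggressive use of symmetry and pruning. I would view $[3]^4$ as the union of three parallel three-dimensional slices $V_1, V_2, V_3$ along (say) the first coordinate, with $V_2$ the centre slice and $V_1, V_3$ the side slices. For any Moser set $A \subset [3]^4$, each restriction $\phi_{V_i}^{-1}(A)$ is a Moser set in $[3]^3$, so its statistics $(a_i, b_i, c_i, d_i)$ lie in the list from Lemma \ref{paretop}; moreover the global statistics $(a,b,c,d,e)$ are recovered from the slice statistics via Lemma \ref{dci}.

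The key reduction is to enumerate slice by slice rather than over all subsets of $[3]^4$. Using the geometric symmetries that preserve the chosen slicing (the reflection exchanging $V_1 \leftrightarrow V_3$, the permutations of the remaining three coordinates, and the reflections within those coordinates), one places canonical-form constraints on the pair $(A \cap V_1, A \cap V_3)$ to avoid overcounting. For each such ordered pair, the middle slice $A \cap V_2$ must simultaneously be a Moser set in $V_2 \cong [3]^3$ and avoid completing any geometric line whose two endpoints lie in $V_1$ and $V_3$; this amounts to forbidding a prescribed subset of $V_2$. A search over $V_2$ constrained by these forbidden points produces all candidate statistics vectors $(a,b,c,d,e)$, and a standard dominance filter extracts the Pareto-optimal ones.

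The main obstacle is sheer computational size. Even passing to the $83158$ equivalence classes of line-free subsets of $[3]^3$ noted after Lemma \ref{paretop}, the enumeration of compatible $(V_1, V_3)$ pairs is large, and the middle-slice subproblem for each pair is itself nontrivial. Aggressive pruning via the inequalities \eqref{eleven}--\eqref{eight} and the upper bounds on each slice density given by Lemma \ref{dci} is essential: any partial configuration whose remaining potential cannot improve upon the current Pareto frontier may be discarded. This is the one irreducible machine computation underlying the proof of $c'_{6,3}=353$, and unlike the other Pareto-optimal classifications in this paper, no computer-free argument is known.
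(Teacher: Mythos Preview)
Your proposal is essentially the same decomposition and search strategy as the paper: slice $[3]^4$ into two side slices and a centre slice along one coordinate, enumerate the side slices, compute the forbidden set in the centre slice, and use a precomputed table of centre-slice Pareto optima against each possible forbidden set. The paper does add one ingredient you do not mention: it first proves by a short human argument that any Pareto-optimal vector has $a\ge 3$, and then organises the outer loop not over pairs of full $3$-dimensional Moser sets but over the $391$ symmetry classes of subsets of $S_{4,4}$ of size at least $3$, enumerating for each such class only those side-slice Moser sets with the prescribed intersection with $S_{3,3}$. This makes the symmetry reduction clean (the choice of $A\cap S_{4,4}$ simultaneously constrains both side slices, avoiding the issue that normalising one side slice via the $83158$ equivalence classes does not independently normalise the other) and keeps the run time to a couple of hours on a cluster. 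Your version would also terminate with enough pruning, but the $a\ge 3$ step and the $S_{4,4}$-first organisation are what make the paper's search concretely manageable.
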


\begin{figure}
{\tiny
\begin{tabular}{|ll|}
\hline
$a$ & $(b,c,d,e)$ \\ 
\hline
$3$ & $(16, 24)$ \\
$4$& $(14, 19, 2)$,
$(15, 24)$,
$(16, 8, 4, 1)$,
$(16, 14, 4)$,
$(16, 23)$,
$(17, 21)$,
$(18, 19)$\\
$5$& $(12, 12, 4, 1)$,
$(12, 13, 6)$,
$(12, 15, 5)$,
$(12, 19, 2)$,
$(13, 10, 4, 1)$,
$(13, 14, 5)$,
$(13, 21, 1)$,
$(15, 9, 4, 1)$,
$(15, 12, 3, 1)$,
$(15, 13, 5)$,
$(15, 18, 3)$,\\
& $(15, 20, 1)$,
$(15, 22)$,
$(16, 7, 4, 1)$,
$(16, 10, 3, 1)$,
$(16, 11, 5)$,
$(16, 12, 2, 1)$,
$(16, 16, 3)$,
$(16, 19, 1)$,
$(16, 21)$,
$(17, 12, 4)$,
$(17, 14, 3)$,\\
&$(17, 16, 2)$,
$(17, 18, 1)$,
$(17, 20)$,
$(18, 13, 3)$,
$(18, 14, 2)$,
$(20, 8, 4)$,
$(20, 10, 3)$,
$(20, 13, 2)$,
$(20, 14, 1)$,
$(20, 18)$,
$(21, 10, 2)$,\\
&$(21, 15)$,
$(22, 13)$\\
$6$ & $( 8, 12, 8)$,
$( 10, 11, 4, 1)$,
$( 11, 12, 7)$,
$( 12, 10, 7)$,
$( 12, 13, 5)$,
$( 12, 18, 4)$,
$( 13, 16, 4)$,
$( 14, 9, 4, 1)$,
$( 14, 9, 7)$,
$( 14, 12, 6)$,
$( 14, 16, 3)$,\\
&$( 14, 19, 1)$,
$( 14, 21)$
$( 15, 7, 4, 1)$,
$( 15, 10, 3, 1)$,
$( 15, 10, 6)$,
$( 15, 11, 2, 1)$,
$( 15, 12, 5)$,
$( 15, 15, 4)$,
$( 15, 20)$,
$( 16, 7, 3, 1)$,
$( 16, 8, 6)$,\\
&$( 16, 9, 2, 1)$,
$( 16, 10, 5)$,
$( 16, 12, 1, 1)$,
$( 16, 13, 4)$,
$( 16, 14, 3)$,
$( 16, 18, 2)$,
$( 16, 19)$,
$( 17, 9, 5)$,
$( 17, 10, 4)$,
$( 17, 13, 3)$,
$( 17, 15, 2)$,
$( 17, 17, 1)$,\\
&$( 17, 18)$,
$( 18, 13, 2)$,
$( 18, 16, 1)$,
$( 18, 17)$,
$( 19, 9, 4)$,
$( 19, 12, 3)$,
$( 19, 15, 1)$,
$( 20, 7, 4)$,
$( 20, 9, 3)$,
$( 20, 12, 2)$,
$( 20, 13, 1)$,
$( 20, 15)$,
$( 21, 8, 3)$,\\
&$( 21, 9, 2)$,
$( 21, 12, 1)$,
$( 21, 14)$,
$( 22, 7, 3)$,
$( 22, 8, 2)$,
$( 22, 10, 1)$,
$( 23, 9, 1)$,
$( 24, 7, 2)$,
$( 24, 8, 1)$,
$( 24, 12)$,
$( 25, 9)$,
$( 26, 7)$\\
$7$ & $(8, 6, 8)$,
$(11, 9, 4, 1)$,
$(11, 12, 6)$,
$(12, 8, 4, 1)$,
$(12, 8, 6)$,
$(12, 12, 3, 1)$,
$(12, 12, 5)$,
$(12, 13, 4)$,
$(12, 15, 3)$,
$(12, 17, 2)$,
$(13, 7, 4, 1)$,\\
&$(13, 10, 3, 1)$,
$(13, 11, 5)$,
$(13, 12, 2, 1)$,
$(13, 12, 4)$,
$(13, 14, 3)$,
$(13, 16, 2)$,
$(14, 6, 4, 1)$,
$(14, 6, 7)$,
$(14, 9, 5)$,
$(14, 10, 2, 1)$,
$(14, 12, 1, 1)$,\\
&$(14, 17, 1)$,
$(14, 19)$,
$(15, 7, 5)$,
$(15, 8, 3, 1)$,
$(15, 9, 2, 1)$,
$(15, 11, 1, 1)$,
$(15, 11, 4)$,
$(15, 13, 3)$,
$(15, 16, 1)$,
$(16, 6, 3, 1)$,
$(16, 6, 6)$,\\
&$(16, 8, 2, 1)$,
$(16, 10, 1, 1)$,
$(16, 10, 4)$,
$(16, 12, 0, 1)$,
$(16, 12, 3)$
$(16, 15, 2)$,
$(16, 17)$,
$(17, 6, 5)$,
$(17, 7, 4)$,
$(17, 11, 3)$,
$(17, 13, 2)$,
$(17, 14, 1)$,\\
&$(17, 16)$,
$(18, 10, 3)$,
$(18, 13, 1)$,
$(18, 15)$,
$(19, 9, 3)$,
$(20, 6, 4)$,
$(20, 11, 2)$,
$(20, 12, 1)$,
$(20, 14)$,
$(21, 8, 2)$,
$(21, 10, 1)$,
$(21, 12)$,\\
&$(22, 9, 1)$,
$(22, 11)$,
$(23, 6, 3)$,
$(23, 7, 1)$,
$(23, 10)$,
$(24, 6, 2)$,
$(24, 9)$,
$(25, 6, 1)$,
$(25, 8)$,
$(26, 3, 1)$,
$(28, 6)$,
$(29, 3)$,
$(30, 1)$\\
$8$ &  $(8, 0, 8)$,
$(8, 9, 7)$,
$(8, 12, 6)$,
$(9, 9, 4, 1)$,
$(9, 10, 6)$,
$(9, 12, 3, 1)$,
$(9, 12, 5)$,
$(9, 13, 4)$,
$(9, 15, 3)$,
$(10, 7, 4, 1)$,
$(10, 10, 3, 1)$,
$(10, 10, 5)$,\\
&$(10, 12, 2, 1)$,
$(10, 12, 4)$,
$(10, 13, 3)$,
$(10, 15, 2)$,
$(11, 6, 4, 1)$,
$(11, 9, 6)$,
$(11, 10, 2, 1)$,
$(11, 11, 4)$,
$(12, 7, 6)$,
$(12, 9, 3, 1)$,
$(12, 9, 5)$,\\
&$(12, 10, 4)$,
$(12, 12, 1, 1)$,
$(12, 14, 2)$,
$(12, 16, 1)$,
$(12, 18)$,
$(13, 7, 3, 1)$,
$(13, 7, 5)$,
$(13, 9, 2, 1)$,
$(13, 12, 0, 1)$,
$(13, 12, 3)$,
$(14, 0, 7)$,\\
&$(14, 6, 6)$,
$(14, 7, 2, 1)$,
$(14, 8, 1, 1)$,
$(14, 9, 4)$
$(14, 11, 0, 1)$,
$(14, 11, 3)$,
$(14, 13, 2)$,
$(14, 15, 1)$,
$(14, 17)$,
$(15, 6, 3, 1)$,
$(15, 6, 5)$,\\
&$(15, 7, 1, 1)$,
$(16, 0, 6)$,
$(16, 4, 3, 1)$,
$(16, 4, 5)$,
$(16, 6, 2, 1)$,
$(16, 8, 4)$,
$(16, 9, 0, 1)$,
$(16, 10, 3)$,
$(16, 12, 2)$,
$(16, 14, 1)$,
$(16, 16)$
$(17, 0, 5)$,\\
&$(17, 3, 4)$,
$(17, 8, 3)$,
$(17, 10, 2)$,
$(17, 12, 1)$,
$(17, 14)$,
$(18, 9, 2)$,
$(18, 11, 1)$,
$(18, 12)$,
$(19, 6, 3)$,
$(19, 8, 2)$,
$(20, 0, 4)$,
$(20, 4, 3)$,
$(20, 7, 2)$,\\
&$(20, 9, 1)$,
$(20, 11)$,
$(21, 4, 2)$,
$(21, 7, 1)$,
$(22, 3, 2)$,
$(22, 6, 1)$,
$(22, 9)$,
$(23, 0, 3)$,
$(23, 4, 1)$,
$(24, 0, 2)$,
$(24, 3, 1)$,
$(24, 8)$,
$(25, 1, 1)$,\\
&$(25, 6)$,
$(26, 0, 1)$,
$(26, 4)$,
$(28, 3)$,
$(32)$\\
$9$ & $(8, 10, 4)$,
$(9, 9, 4)$,
$(9, 12, 3)$,
$(10, 8, 4)$,
$(10, 10, 3)$,
$(10, 12, 2)$,
$(10, 13, 1)$,
$(10, 15)$,
$(11, 11, 2)$,
$(12, 7, 4)$,
$(12, 9, 3)$,
$(12, 12, 1)$,
$(12, 14)$,\\
&$(13, 7, 3)$,
$(13, 10, 2)$,
$(14, 9, 2)$,
$(14, 11, 1)$,
$(14, 13)$,
$(15, 6, 3)$,
$(16, 0, 4)$,
$(16, 4, 3)$,
$(16, 8, 2)$,
$(16, 10, 1)$,
$(16, 12)$,
$(17, 3, 3)$,
$(17, 6, 2)$,\\
&$(17, 8, 1)$,
$(17, 10)$,
$(18, 2, 3)$
$(18, 4, 2)$,
$(18, 7, 1)$,
$(18, 9)$,
$(19, 0, 3)$,
$(19, 3, 2)$,
$(19, 6, 1)$,
$(20, 1, 2)$,
$(20, 5, 1)$,\\
&$(20, 8)$,
$(21, 4, 1)$,
$(21, 6)$,
$(22, 1, 1)$,
$(22, 5)$,
$(24, 4)$,
$(25, 2)$,
$(28)$\\
$10$ & $(8, 6, 4)$,
$(8, 8, 3)$,
$(9, 7, 3)$,
$(9, 10, 2)$,
$(9, 11, 1)$,
$(9, 13)$,
$(10, 5, 4)$,
$(10, 9, 2)$,
$(10, 12)$,
$(11, 6, 3)$,
$(12, 4, 4)$,
$(12, 5, 3)$,
$(12, 7, 2)$,
$(12, 10, 1)$,\\
&$(12, 11)$,
$(13, 6, 2)$,
$(13, 8, 1)$,
$(13, 10)$,
$(14, 3, 3)$,
$(14, 5, 2)$,
$(14, 9)$,
$(15, 2, 3)$,
$(15, 7, 1)$,
$(16, 4, 2)$,
$(16, 6, 1)$,
$(16, 8)$,
$(17, 4, 1)$,
$(17, 6)$,\\
&$(18, 2, 1)$,
$(18, 5)$,
$(20, 4)$,
$(21, 2)$,
$(22, 1)$,
$(24)$\\
$11$ & $(4, 6, 4)$,
$(6, 5, 4)$,
$(7, 6, 3)$,
$(8, 4, 4)$,
$(8, 5, 3)$,
$(9, 6, 2)$,
$(9, 8, 1)$,
$(9, 10)$,
$(10, 3, 3)$,
$(10, 5, 2)$,
$(10, 9)$,
$(11, 2, 3)$,
$(11, 7, 1)$,
$(12, 4, 2)$,\\
&$(12, 6, 1)$,
$(12, 8)$,
$(13, 4, 1)$,
$(13, 6)$,
$(14, 2, 1)$,
$(14, 5)$,
$(16, 4)$,
$(17, 2)$,
$(18, 1)$,
$(20)$\\
$12$ & $(4, 3, 3)$,
$(6, 2, 3)$,
$(6, 5, 2)$,
$(6, 7, 1)$,
$(6, 9)$,
$(8, 4, 2)$,
$(8, 6, 1)$,
$(8, 8)$,
$(9, 4, 1)$,
$(9, 6)$,
$(10, 2, 1)$,
$(10, 5)$,
$(12, 4)$\\
&$(13, 2)$,
$(14, 1)$,
$(16)$ \\
$13$ & $(6, 5)$,
$(8, 4)$,
$(9, 2)$,
$(10, 1)$,
$(12)$ \\
$14$ & $(4, 3)$,
$(5, 2)$,
$(6, 1)$,
$(8)$\\
$15$& $(4)$\\
$16$ & $(0)$\\
\hline
\end{tabular}
}
\caption{
The Pareto-optimal statistics $(a,b,c,d,e)$ of Moser sets in $[3]^4$.  To save space, all statistics with the same value of $a$ have been collected in a single row; also, trailing zeroes for $(b,c,d,e)$ have been dropped, thus for instance $(b,c)$ is short for $(b,c,0,0)$.  This table can also be found at {\tt http://spreadsheets.google.com/ccc?key=rwXB\_Rn3Q1Zf5yaeMQL-RDw}.}
\label{4d-pareto}
\end{figure}

\begin{proof}  This was computed by computer search as follows.  First, one observed that if $(a,b,c,d,e)$ was Pareto-optimal, then $a\geq 3$.  To see this, it suffices to show that for any Moser set $A \subset [3]^4$ with $a(A)=0$, it is possible to add three points from $S_{4,4}$ to $A$ and still have a Moser set.  To show this, suppose first that $A$ contains a point from $S_{1,4}$, such as $2221$. Then $A$ must omit either $2211$ or $2231$; without loss of generality we may assume that it omits $2211$. Similarly we may assume it omits $2121$ and $1221$. Then we can add $1131$, $1311$, $3111$ to $A$, as required. Thus we may assume that $A$ contains no points from $S_{1,4}$.  Now suppose that $A$ omits a point from $S_{2,4}$, such as $2211$. Then one can add $3333$, $3111$, $1311$ to $A$, as required. Thus we may assume that A contains all of $S_{2,4}$, which forces $A$ to omit $2222$, as well as at least one point from $S_{3,4}$, such as $2111$. But then $3111$, $1111$, $3333$ can be added to the set, a contradiction. 

Thus we only need to search through sets $A \subset [3]^4$ for which $|A \cap S_{4,4}| \geq 3$.  A straightforward computer search shows that up to the symmetries of the cube, there are $391$ possible choices for $A \cap S_{4,4}$.  For each such choice, we looped through all the possible values of the slices $A \cap 1***$ and $A \cap 3***$, i.e. all three-dimensional Moser sets which had the indicated intersection with $S_{3,3}$.  (For fixed $A \cap S_{4,4}$, the number of possibilities for $A \cap 1***$ ranges from $1$ to $87123$, and similarly for $A \cap 3***$).  For each pair of slices $A \cap 1***$ and $A \cap 3***$, we computed the lines connecting these two sets to see what subset of $2***$ was excluded from $A$; there are $2^{27}$ possible such exclusion sets.  We precomputed a lookup table that gave the Pareto-optimal statistics for $A \cap 2***$ for each such choice of exclusion set; using this lookup table for each choice of $A \cap 1***$ and $A \cap 3***$ and collating the results, we obtained the above list. On a Linux cluster, the lookup table took 22 minutes to create, and the loop over the $A \cap 1***$ and $A \cap 3***$ slices took two hours, spread out over $391$ machines (one for each choice of $A \cap S_{4,4}$). Further details (including source code) can be found at the page {\tt 4D Moser brute force search} of \cite{polywiki}.
\end{proof}

As a consequence of this data, we have the following facts about the statistics of large Moser sets:

\begin{proposition}\label{stat} Let $A \subset [3]^4$ be a Moser set with statistics $(a,b,c,d,e)$.
\begin{itemize}
\item[(i)] If $|A| \geq 40$, then $e=0$.
\item[(ii)] If $|A| \geq 43$, then $d=0$.
\item[(iii)] If $|A| \geq 42$, then $d \leq 2$.
\item[(iv)] If $|A| \geq 41$, then $d \leq 3$.
\item[(v)] If $|A| \geq 40$, then $d \leq 6$.
\item[(vi)] If $|A| \geq 43$, then $c \geq 18$.
\item[(vii)] If $|A| \geq 42$, then $c \geq 12$.
\item[(viii)] If $|A| \geq 43$, then $b \geq 15$.
\end{itemize}
\end{proposition}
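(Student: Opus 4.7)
My plan is to deduce all eight parts by direct inspection of Lemma~\ref{paretop-4}, whose list of Pareto-optimal statistics is tabulated in Figure~\ref{4d-pareto}. The underlying principle is that the set of feasible statistics $(a,b,c,d,e)$ of Moser sets in $[3]^4$ is downward-closed in the componentwise order, since removing a point from a Moser set yields a Moser set; consequently the feasible region is exactly the union of the downward closures of the Pareto-optimal vectors.

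The first step is to translate each claim into an equivalent condition on the Pareto-optimal list. For upper-bound claims of the form ``$|A|\ge T \implies v_j \le B$'' (parts (i)--(v)), the implication holds if and only if every Pareto-optimal $v^*$ with total $s^* := a^*+b^*+c^*+d^*+e^* \ge T$ already has $v_j^* \le B$, because any feasible vector with $v_j > B$ and sum $\ge T$ is dominated by some Pareto-optimal $v^*$ sharing these two properties. For the lower-bound claims (vi)--(viii), the translation is slightly subtler: ``$|A|\ge T \implies v_j \ge B$'' is equivalent to the requirement that every Pareto-optimal $v^*$ with $s^* \ge T$ satisfy $v_j^* \ge s^* - (T-B)$, since the smallest value of $v_j$ attainable by a feasible $v \le v^*$ with $\mathrm{sum}(v) \ge T$ is $\max(0, v_j^* - (s^* - T))$.

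With these translations in hand, what remains is a finite verification against Figure~\ref{4d-pareto}. Because $c'_{4,3}=43$, the Pareto-optimal vectors with $s^* \ge 43$ are just the four vectors $(3,16,24,0,0)$, $(4,15,24,0,0)$, $(4,16,23,0,0)$, and $(5,20,18,0,0)$; these immediately yield (ii), (vi) (tight at $(5,20,18,0,0)$), and (viii) (tight at $(4,15,24,0,0)$). Enlarging the scan to $s^* \ge 42$ adds six further Pareto-optimal vectors, notably $(6,16,18,2,0)$ and $(6,24,12,0,0)$, from which (iii) and (vii) follow. Finally, scans at the lower thresholds dispose of (i), (iv), (v): no Pareto-optimal vector with $e^*=1$ has sum exceeding $36$; the largest $d^*$ occurring at $s^* \ge 41$ is $3$, attained at $(5,15,18,3,0)$; and the largest $d^*$ occurring at $s^* \ge 40$ is $4$.

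I expect the main obstacle to be purely tabular bookkeeping: one must enumerate without omission every Pareto-optimal vector above each of the thresholds $40,41,42,43$, while correctly interpreting the convention in Figure~\ref{4d-pareto} whereby trailing zero entries of $(b,c,d,e)$ are suppressed. No conceptual difficulty arises beyond Lemma~\ref{paretop-4} itself.
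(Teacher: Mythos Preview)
Your approach is correct and is exactly the one the paper intends: immediately before Proposition~\ref{stat} the paper writes ``As a consequence of this data, we have the following facts\ldots'', referring to Lemma~\ref{paretop-4}, and the accompanying Remark merely points to external references for alternative arguments. Your translations of the upper- and lower-bound implications into conditions on the Pareto-optimal list are sound (the lower-bound reduction $v_j^* \ge s^*-(T-B)$ is the only subtle point, and you handle it correctly), and your enumeration of the relevant Pareto-optimal vectors at thresholds $40,41,42,43$ matches Figure~\ref{4d-pareto}.
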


\begin{remark}  This proposition was first established by an integer program, see the file {\tt integer.tex} at \cite{polywiki}.  A computer-free proof can be found at 

\centerline{{\tt terrytao.files.wordpress.com/2009/06/polymath2.pdf}.}
\end{remark}

\subsection{Five dimensions}

Now we establish the bound $c'_{5,3}=124$.  In view of the lower bounds, it suffices to show that there does not exist a Moser set $A \subset [3]^5$ with $|A|=125$.  

We argue by contradiction.  Let $A$ be as above, and let $(a(A),\ldots,f(A))$ be the statistics of $A$.

\begin{lemma}\label{fvan} $f(A)=0$.
\end{lemma}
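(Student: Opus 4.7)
The plan is to derive a contradiction by showing that if $22222 \in A$ (i.e., $f(A) = 1$), then all other spheres of the cube must be at most half-filled, which caps $|A|$ well below $125$.

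First I would observe that $S_{0,5} = \{22222\}$, so $f(A) \in \{0,1\}$; we may assume for contradiction that $f(A) = 1$, which means $22222 \in A$. The key geometric observation is that for every $i \geq 1$ and every antipodal pair $\{w, \bar{w}\} \subset S_{i,5}$ (where $\bar{w}$ denotes the reflection of $w$ through $22222$), the set $\{w, 22222, \bar{w}\}$ is a geometric line. Since $A$ is a Moser set containing $22222$, at most one of the two antipodal points can lie in $A$. Consequently
\[
a_i(A) \;\leq\; \tfrac{1}{2}|S_{i,5}| \qquad (1 \leq i \leq 5).
\]
Equivalently, this follows from the one-dimensional propagated inequality \eqref{alpha-1} applied with $r = i$ and $q = 5-i$, which gives $2\alpha_i(A) + \alpha_5(A) \leq 2$, i.e. $\alpha_i(A) \leq 1/2$ under the assumption $f(A) = \alpha_5(A) = 1$.

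Next I would plug in the explicit sphere sizes $|S_{1,5}|=10$, $|S_{2,5}|=40$, $|S_{3,5}|=80$, $|S_{4,5}|=80$, $|S_{5,5}|=32$ to obtain
\[
a \leq 16, \quad b \leq 40, \quad c \leq 40, \quad d \leq 20, \quad e \leq 5, \quad f = 1.
\]
Summing yields $|A| \leq 16 + 40 + 40 + 20 + 5 + 1 = 122 < 125$, contradicting $|A|=125$. Hence $f(A) = 0$.

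There is no serious obstacle here: the statement is essentially a one-line consequence of the Moser condition applied to the $121$ geometric lines through the center $22222$. The only care needed is to correctly identify each antipodal pair through $22222$ as a genuine geometric line, which is immediate from the definition of a geometric line with the wildcard $x$ placed in the non-$2$ coordinates of $w$.
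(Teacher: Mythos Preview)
Your proof is correct and takes essentially the same approach as the paper: if $22222 \in A$, then each of the $(3^5-1)/2 = 121$ antipodal pairs through the center forms a geometric line with $22222$, so each pair contributes at most one point, yielding $|A|\leq 122$. The paper states this in one line without breaking the count down by spheres, but the argument is identical.

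One minor notational slip: you write $a_i(A) \leq \tfrac12|S_{i,5}|$, but in the paper's convention $a_i(A) = |A \cap S_{5-i,5}|$, not $|A \cap S_{i,5}|$. This does not affect the final tally, since you correctly match $a,b,c,d,e$ with the spheres $S_{5,5},\ldots,S_{1,5}$ in the displayed bounds.
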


\begin{proof} If $f(A)$ is non-zero, then $A$ contains $22222$, then each of the $\frac{3^5-1}{2} = 121$ antipodal pairs in $[3]^5$ can have at most one point in $A$, leading to only $122$ points.
\end{proof}

Let us slice $[3]^5$ into three parallel slices, e.g. $1****, 2****, 3****$.  The intersection of $A$ with each of these slices has size at most $43$.  In particular, this implies that
\begin{equation}\label{boo}
 |A \cap 1****| + |A \cap 3****| = 125 - |A \cap 2****| \geq 82.
\end{equation}
Thus at least one of $A \cap 1****$, $A \cap 3****$ has cardinality at least $41$; by Proposition \ref{stat}(iv) we conclude that
\begin{equation}\label{d13}
\min( d(1****), d(3****) ) \leq 3.
\end{equation}
Furthermore, equality can only hold in \eqref{d13} if $A \cap 1****$, $A \cap 3****$ both have cardinality exactly $41$, in which case from Proposition \ref{stat}(iv) again we must have
\begin{equation}\label{d13a}
d(1****)=d(3****)=3.
\end{equation}
Of course, we have a similar result for permutations.

Now we improve the bound $|A \cap 2****| \leq 43$:

\begin{lemma} $|A \cap 2****| \leq 41$.
\end{lemma}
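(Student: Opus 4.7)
The plan is to argue by contradiction: suppose $|A\cap 2****|\geq 42$. Viewing the centre slice $2****$ as a copy of $[3]^4$ (coordinates $2,3,4,5$), it becomes a 4D Moser set $B$. The crucial observation is that the statistic $e(B)$ counts exactly $|A\cap\{22222\}|$, which equals $f(A)$; by Lemma \ref{fvan} we therefore have $e(B)=0$, reducing the problem to: no 4D Moser set with $e=0$ has $|B|\geq 42$ while being a slice of a 125-point set in $[3]^5$.

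First, I would consult the Pareto-optimal table in Lemma \ref{paretop-4} and list all statistics $(a,b,c,d,0)$ with $a+b+c+d\geq 42$. A direct scan produces a short list, all satisfying $a\leq 6$ and (for $|B|=43$) $c\geq 23$, and more generally forcing $c$ to be quite large and $d$ to be very small whenever $|B|\geq 42$. Every candidate centre slice is dominated coordinatewise by one of these finitely many types, so the analysis splits into a finite case study on the shape of $A\cap 2****$.

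Second, the side slices must make up the remaining mass: $|A\cap 1****|+|A\cap 3****|\geq 125-43=82$, so at least one side slice has $\geq 41$ points, and Proposition \ref{stat}(ii)--(v) severely restricts its $d$- and $e$-statistics. I would combine this with the double-counting identity of Lemma \ref{dci}: summing $a_i(V)$ over the two side slices $V$ and adding the centre-slice count gives $a_i(A)$ up to a combinatorial weight, so a large $c(B)$ (i.e., many points of $A$ in $S_{2,5}\cap 2****$) translates into many forbidden pairs in the side slices (because any $w(1)\in 1****$, $w(3)\in 3****$ whose midpoint lies in $A\cap 2****\cap S_{2,5}$ cannot both be in $A$).

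Finally, I would use this forbidden-pair constraint together with the 4D upper bound $c'_{4,3}=43$ on each side slice to bound the total $|A\cap 1****|+|A\cap 3****|$ from above by a quantity strictly less than $82$, producing the contradiction. The main obstacle will be carrying out this last step uniformly across all admissible centre-slice shapes; the case $|B|=42$ with the low-$c$ type $(6,24,12,0,0)$ is likely the most delicate, as the induced forbidden-pair pattern is weakest there, and one may need an auxiliary inequality obtained by propagating one of \eqref{eleven}--\eqref{eight} via Lemma \ref{prop} to close that case cleanly.
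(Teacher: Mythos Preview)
Your plan has a genuine gap. The ``forbidden-pair'' step you describe — using midpoints in $A\cap 2{*}{*}{*}{*}$ to bound $|A\cap 1{*}{*}{*}{*}|+|A\cap 3{*}{*}{*}{*}|$ from above — is far too weak along the direction you set it up. Every point of $A\cap 2{*}{*}{*}{*}$ gives a vertical forbidden pair regardless of which sphere it lies in, so singling out $c(B)$ gains nothing; and the resulting bound is only $|A\cap 1{*}{*}{*}{*}|+|A\cap 3{*}{*}{*}{*}|\le 162-|A\cap 2{*}{*}{*}{*}|\le 120$, nowhere near the $81$ you would need for a contradiction. No propagated linear inequality from \eqref{eleven}--\eqref{eight} will close a gap that large in one direction.

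The paper's proof does not try to bound the two parallel side slices at all. Instead it changes direction: it slices $A'=A\cap 2{*}{*}{*}{*}$ itself into \emph{its} side slices, say $1{*}{*}{*}$ and $3{*}{*}{*}$ (i.e.\ $21{*}{*}{*}$ and $23{*}{*}{*}$ in $[3]^5$), and observes that $c$ of such a $3$D sub-slice is a lower bound for the $d$-statistic of the \emph{perpendicular} $4$D side slice of $A$ (e.g.\ $d({*}1{*}{*}{*})\ge c(1{*}{*}{*})$). For $|A'|=43$, Proposition~\ref{stat}(vi) gives $c(A')\ge 18$; pigeonholing over the four pairs of opposite side slices of $A'$ yields one pair with $c$-sum $\ge 9$, hence one perpendicular side slice of $A$ with $d\ge 5$ and its opposite with $d\ge 3$, contradicting \eqref{d13}--\eqref{d13a}. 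For $|A'|=42$ the logic reverses: the now-strengthened side-slice bound \eqref{d13-6} caps each perpendicular $c$-pair of $A'$ at $6$, forcing $c(A')\le 12$ via Lemma~\ref{dci}; Proposition~\ref{stat}(vii) then pins $c(A')=12$, and the resulting rigidity across all five directions forces $d(1{*}{*}{*}{*})+d(3{*}{*}{*}{*})=16$, contradicting \eqref{d13-6}. The essential maneuver you are missing is this perpendicular re-slicing that converts $c$-information on the centre slice into $d$-constraints on side slices in other directions, where Proposition~\ref{stat} bites.
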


\begin{proof} Suppose first that $|A \cap 2****|=43$.  Let $A' \subset [3]^4$ be the subset of $[3]^4$ corresponding to $A \cap 2****$, thus $A'$ is a Moser set of cardinality $43$.  By Proposition \ref{stat}(vi), $c(A') \geq 18$.  By Lemma \ref{dci}, the sum of the $c(V)$, where $V$ ranges over the eight side slices of $[3]^4$, is therefore at least $36$.  By the pigeonhole principle, we may thus find two opposing side slices, say $1***$ and $3***$, with $c(1***)+c(3****) \geq 9$.  Since $c(1***), c(3***)$ cannot exceed $6$, we thus have $c(1***), c(3***) \geq 3$, with at least one of $c(1***), c(3***)$ being at least $5$.  Passing back to $A$, this implies that $d(*1***), d(*3***) \geq 3$, with at least one of $d(*1***), d(*3***)$ being at least $5$.  But this contradicts \eqref{d13} together with the refinement \eqref{d13a}.

We have just shown that $|A \cap 2****| \leq 42$; we can thus improve \eqref{boo} to
$$ |A \cap 1****| + |A \cap 3****| \geq 83.$$
Combining this with Proposition \ref{stat}(ii)-(v) we see that
\begin{equation}\label{d13-6}
 d(1****)+d(3****) \leq 6
\end{equation}
with equality only if $|A \cap 2****|=42$, and similarly for permutations.

Now let $A'$ be defined as before.  Then we have
$$ c(1***) + c(3***) \leq 6$$
and similarly for permutations.  Applying Lemma \ref{dci}, this implies that $c(2****) = c(A') \leq 12$.

Now suppose for contradiction that $|A'|=|A \cap 2****|=42$.  Then by Proposition \ref{stat}(vii) we have 
\begin{equation}\label{coo-1}
c(2****) = 12; 
\end{equation}
applying Lemma \ref{dci} again, this forces $c(1***)+c(3***)=6$ and similarly for permutations, which then implies that
\begin{equation}\label{doo}
d(*1***)+d(*3***) = d(**1**)+d(**3**) = d(***1*)+d(***3*) = d(****1)+d(****3) = 6
\end{equation}
and hence
$$ |A \cap *2***| = |A \cap **2**| = |A \cap ***2*| = |A \cap ****2| = 42$$
and thus
\begin{equation}\label{coo-2}
c(*2***) = c(**2**) = c(***2*) = c(****2) = 12.
\end{equation}
Combining \eqref{coo-1}, \eqref{doo}, \eqref{coo-2} we conclude that
$$ d(1****)+d(3****) = 16,$$
contradicting \eqref{d13-6}.
\end{proof}

With this proposition, the bound \eqref{boo} now improves to
\begin{equation}\label{84}
|A \cap 1****| + |A \cap 3****| \geq 84
\end{equation}
and in particular
\begin{equation}\label{41}
|A \cap 1****|, |A \cap 3****| \geq 41.
\end{equation}
from this and Proposition \ref{stat}(ii)-(iv) we now have
\begin{equation}\label{d13-improv}
 d(1****)+d(3****) \leq 4
\end{equation}
and similarly for permutations.

\begin{lemma}\label{evan} $e(A)=0$.
\end{lemma}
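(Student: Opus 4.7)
The plan is to deduce $e(A)=0$ by reducing to the 4D classification, specifically Proposition~\ref{stat}(i), which asserts that any Moser set in $[3]^4$ of cardinality at least $40$ must avoid the point $2222\in S_{0,4}$.

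Each element of $S_{1,5}$ has a single non-$2$ coordinate, so it lies in exactly one side slice $V$ of $[3]^5$; under the isomorphism $\phi_V\colon[3]^4\to V$, this element corresponds to $2222\in S_{0,4}$. Hence it suffices to show that $|A\cap V|\geq 40$ for every side slice $V$ of $[3]^5$: Proposition~\ref{stat}(i) applied to $\phi_V^{-1}(A\cap V)$ would then force $2222\notin \phi_V^{-1}(A)$, which translates back to the unique element of $S_{1,5}$ lying in $V$ being absent from $A$. Since this covers all ten elements of $S_{1,5}$, it yields $e(A)=0$.

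To produce the uniform lower bound $|A\cap V|\geq 40$ I would invoke the coordinate-symmetric version of \eqref{41}. The previous lemma established $|A\cap 2{*}{*}{*}{*}|\leq 41$, and its proof (together with the chain \eqref{d13}--\eqref{d13-improv}) is invariant under permutation of coordinates---the paper notes this repeatedly with the refrain ``similarly for permutations''. Consequently $|A\cap W|\leq 41$ for every center slice $W$ in every direction, so the two parallel side slices sum to at least $125-41=84$; since neither can exceed $c'_{4,3}=43$, each contains at least $41\geq 40$ points of $A$, and Proposition~\ref{stat}(i) does the rest. The only delicate step I anticipate is verifying that the coordinate symmetry really does propagate through the preceding lemma cleanly, so that the $\leq 41$ bound on center slices is available in every direction; once this is confirmed the rest of the argument is essentially a one-line application of the 4D result.
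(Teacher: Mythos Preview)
Your proof is correct and follows essentially the same route as the paper: use the coordinate-symmetric version of the bound $|A\cap W|\leq 41$ for centre slices to get $|A\cap V|\geq 41$ for every side slice, then apply Proposition~\ref{stat}(i) to kill the centre point of each side slice, which together account for all of $S_{1,5}$. The paper phrases the last step as an appeal to Lemma~\ref{dci} rather than your direct observation that each element of $S_{1,5}$ lies in exactly one side slice, but these are the same counting argument.
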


\begin{proof} From \eqref{84}, the intersection of $A$ with any side slice has cardinality at least $41$, and thus by Proposition \ref{stat}(i) such a side slice has an $e$-statistic of zero.  The claim then follows from Lemma \ref{dci}.
\end{proof}

We need a technical lemma:

\begin{lemma}\label{tech} Let $B \subset S_{5,5}$.  Then there exist at least $|B|-4$ pairs of strings in $B$ which differ in exactly two positions.
\end{lemma}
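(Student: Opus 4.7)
The plan is to identify $S_{5,5}$ with the binary cube $\{0,1\}^5$ via $1\mapsto 0$, $3\mapsto 1$; this bijection preserves Hamming distance, so the claim becomes: any $B\subseteq\{0,1\}^5$ has at least $|B|-4$ pairs at Hamming distance exactly $2$. Since the distance between two points of opposite weight-parity is odd, all distance-$2$ pairs lie within a single parity class. I would therefore split $B=B_0\sqcup B_1$ into its even- and odd-parity parts and prove the stronger bound $e_i\geq|B_i|-2$ for the number $e_i$ of distance-$2$ pairs inside each $B_i$.

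The key combinatorial input, special to length $5$, is that within a single parity class no three distinct points can be pairwise at Hamming distance $4$. Indeed, suppose $u,v,w$ were such a triple, and let $I,J\subseteq\{1,\dots,5\}$ be the supports of $u\oplus v$ and $u\oplus w$. Then $|I|=|J|=4$, so $|I\cap J|\geq 4+4-5=3$, and $v\oplus w=(u\oplus v)\oplus(u\oplus w)$ has support $I\triangle J$ of size $8-2|I\cap J|\leq 2$, contradicting $d(v,w)=4$.

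Within one parity class the only distances between distinct points are $2$ and $4$, so the previous paragraph tells us that the distance-$2$ graph $G_i$ on $B_i$ has independence number at most $2$. In particular $G_i$ has at most $2$ connected components, since otherwise picking one vertex from each of three components would yield an independent triple. Because any graph on $k$ vertices with $c$ components has at least $k-c$ edges, $e_i\geq|B_i|-2$; summing over $i=0,1$ produces at least $|B|-4$ distance-$2$ pairs in $B$, as required.

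I do not foresee a serious obstacle: the case $|B|\leq 4$ is vacuous since the right-hand side is nonpositive, and for larger $|B|$ the graph-theoretic counting above suffices. The only step that depends sensitively on the ambient dimension is the triangle-free conclusion for the distance-$4$ relation, where the inequality $|I\cap J|\geq 3$ is exactly what makes the argument work for $n=5$.
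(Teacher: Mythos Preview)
Your proof is correct. Both your argument and the paper's rest on the same core combinatorial fact---that within a single parity class of $\{1,3\}^5$ no three points can be pairwise at Hamming distance $4$---but the two arguments package it differently. The paper proceeds by induction: it reduces to the base case $|B|=5$ (by finding a distance-$2$ pair and deleting one endpoint), and then for $|B|=5$ it observes that at least three points lie in one parity class, normalises one of them to $11111$, and notes that the remaining points in that class must each have four $3$'s and hence are mutually at distance $2$. Your argument instead proves the stronger per-parity inequality $e_i\geq |B_i|-2$ directly, by turning the no-distance-$4$-triangle fact into the statement that the distance-$2$ graph on $B_i$ has independence number at most $2$, hence at most two components, hence at least $|B_i|-2$ edges. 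This avoids the induction entirely and gives the full bound in one stroke; the paper's version is perhaps more hands-on but requires iterating the base case.
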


\begin{proof} The first non-vacuous case is $|B|=5$.  It suffices to establish this case, as the higher cases then follow by induction (locating a pair of the desired form, then deleting one element of that pair from $B$).

Suppose for contradiction that one can find a $5$-element set $B \subset S_{5,5}$ such that no two strings in $B$ differ in exactly two positions.  Recall that we may split $S_{5,5}=S_{5,5}^e \cup S_{5,5}^o$, where $S_{5,5}^e$ are those strings with an even number of $1$'s, and $S_{5,5}^o$ are those strings with an odd number of $1$'s.  By the pigeonhole principle and symmetry we may assume $B$ has at least three elements in $S_{5,5}^o$.  Without loss of generality, we can take one of them to be $11111$, thus excluding all elements in $S_{5,5}^o$ with exactly two $3$s, leaving only the elements with exactly four $3$s.  But any two of them differ in exactly two positions, a contradiction.
\end{proof}

We can now improve the trivial bound $c(A) \leq 80$:

\begin{corollary}[Non-maximal $c$]\label{cmax} $c(A) \leq 79$.  If $a(A) \geq 7$, then $c(A) \leq 78$.
\end{corollary}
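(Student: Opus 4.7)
The overall plan is a contradiction argument for $c(A)\le 79$ and an immediate application of Lemma~\ref{tech} for the refinement under $a(A)\ge 7$. Both parts exploit the basic geometric fact that if two elements of $S_{5,5}$ differ in exactly two coordinates, their midpoint lies in $S_{3,5}$, so the Moser condition pins down the interaction between these two spheres.

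For the bound $c(A)\le 79$, suppose for contradiction that $c(A)=80$, i.e.\ $A\supseteq S_{3,5}$. I will show $d=0$, $a\le 4$, and $b\le 40$, which combined with Lemmas~\ref{fvan} and~\ref{evan} (giving $e=f=0$) produces $|A|\le 4+40+80=124$, contradicting $|A|=125$. First, $d=0$: any $p\in S_{2,5}$ sits at the middle of the combinatorial line obtained by turning one of its three $2$'s into a wildcard, and both endpoints of this line lie in $S_{3,5}\subseteq A$, so the Moser property rules out $p\in A$. Second, $a\le 4$: two elements of $A\cap S_{5,5}$ differing in exactly two positions would have midpoint in $S_{3,5}\subseteq A$, forbidden by the Moser property, so $A\cap S_{5,5}$ contains no such pair, and Lemma~\ref{tech} forces $a\le 4$. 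Third, $b\le 40$: the combinatorial lines in $[3]^{5}$ with a single wildcard and a single $2$ in the fixed part number $5\cdot 4\cdot 2^{3}=160$, each having middle in $S_{3,5}$ (hence in $A$) and two endpoints in $S_{4,5}$, so each such line admits at most one endpoint in $A$. Since each $q\in S_{4,5}$ is an endpoint of exactly four such lines (one per non-$2$ coordinate of $q$), double counting yields $4b\le 160$.

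For the second bound, suppose $a(A)\ge 7$. Lemma~\ref{tech} produces at least $a-4\ge 3$ pairs $\{x,y\}\subseteq A\cap S_{5,5}$ differing in exactly two positions, and each such pair forces its midpoint in $S_{3,5}$ to be absent from $A$. A direct inspection shows that each $m\in S_{3,5}$ with $2$'s at coordinates $\{t_{1},t_{2}\}$ is the midpoint of exactly two such pairs, namely $\{(1,1),(3,3)\}$ and $\{(1,3),(3,1)\}$ on $\{t_{1},t_{2}\}$. Hence the three pairs determine at least $\lceil 3/2\rceil=2$ distinct midpoints excluded from $A$, yielding $c(A)\le 78$.

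The main obstacle is the joint use of the three bounds in the first claim: neither $d=0$ alone nor $b\le 40$ alone suffices to push $|A|$ below $125$, so both must be combined, and the line-counting step for $b$ (enumerating the $160$ relevant lines and establishing the multiplicity-$4$ double count) is the least automatic part of the argument. Everything else is a direct application of Lemma~\ref{tech} or of the Moser property to an explicit family of combinatorial lines.
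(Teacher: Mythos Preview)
Your proof is correct and follows essentially the same approach as the paper's. The only difference is cosmetic: where you give direct line-counting arguments for $d=0$ and $b\le 40$, the paper simply cites the general inequality \eqref{alpha-1} (of which your counts are the underlying proof), and your second part matches the paper's verbatim.
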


\begin{proof} If $c(A)=80$, then $A$ contains all of $S_{3,5}$, which then implies that no two elements in $A \cap S_{5,5}$ can differ in exactly two places.  It also implies (from \eqref{alpha-1}) that $d(A)$ must vanish, and that $b(A)$ is at most $40$. By Lemma \ref{tech}, we also have that $a(A) = |A \cap S_{5,5}|$ is at most $4$.  Thus $|A| \leq 4 + 40 + 80 + 0 + 0 = 124$, a contradiction.

Now suppose that $a(A) \geq 7$.  Then by Lemma \ref{tech} there are at least three pairs in $A \cap S_{5,5}$ that differ in exactly two places.  Each such pair eliminates one point from $A \cap S_{3,5}$; but each point in $S_{3,5}$ can be eliminated by at most two such pairs, and so we have at least two points eliminated from $A \cap S_{3,5}$, i.e. $c(A) \leq 78$ as required.
\end{proof}

Next, we rewrite the quantity $125=|A|$ in terms of side slices.  From Lemmas \ref{fvan}, \ref{evan} we have
$$ a(A) + b(A) + c(A) + d(A) = 125$$
and hence by Lemma \ref{dci}, the quantity
$$ s(V) := a(V) + \frac{5}{4} b(V) + \frac{5}{3} c(V) + \frac{5}{2} d(V) - \frac{125}{2},$$
where $V$ ranges over side slices, has an average value of zero.  

\begin{proposition}[Large values of $s(V)$]\label{suv}  For all side slices, we have $s(V) \leq 1/2$.  Furthermore, we have $s(V) < -1/2$ unless the statistics $(a(V), b(V), c(V), d(V), e(V))$ are of one of the following four cases:
\begin{itemize}
\item (Type 1) $(a(V),b(V),c(V),d(V),e(V)) = (2,16,24,0,0)$ (and $s(V) = -1/2$ and $|A \cap V| = 42$);
\item (Type 2) $(a(V),b(V),c(V),d(V),e(V)) = (4,16,23,0,0)$ (and $s(V) = -1/6$ and $|A \cap V| = 43$);
\item (Type 3) $(a(V),b(V),c(V),d(V),e(V)) = (4,15,24,0,0)$ (and $s(V) = 1/4$ and $|A \cap V| = 43$);
\item (Type 4) $(a(V),b(V),c(V),d(V),e(V)) = (3,16,24,0,0)$ (and $s(V) = 1/2$ and $|A \cap V| = 43$);
\end{itemize}
\end{proposition}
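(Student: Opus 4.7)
I would reduce the classification to the table of Pareto-optimal Moser statistics in $[3]^4$ provided by Lemma~\ref{paretop-4}. Since Lemma~\ref{evan} gives $e(A)=0$, the $e$-statistic of every side slice $V$ vanishes (as $e(V)=|A\cap V\cap S_{1,5}|$), so $A\cap V$ is a Moser set in $[3]^4$ with vanishing fifth coordinate. The shifted functional $s(V)+\frac{125}{2}$ equals $f(a,b,c,d):=a+\frac{5}{4}b+\frac{5}{3}c+\frac{5}{2}d$ evaluated at $(a(V),b(V),c(V),d(V))$, which depends only on those four numbers and is strictly increasing in each of them. Thus every feasible value of $s(V)$ is bounded above by $f$ evaluated at some Pareto-optimal statistic from Figure~\ref{4d-pareto}.

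The first step would be to scan Figure~\ref{4d-pareto} for Pareto-optimal statistics with $f\ge 62$. Heuristically, since the coefficient of $c$ is largest and $c\le 24$, such statistics must have $c$ close to $24$, which forces $b$ close to $16$ and $a$ small; entries with $d>0$ or $e>0$ cost $a,b,c$ elsewhere and end up well below the threshold. A direct check then exhibits only three Pareto-optima qualifying: $(3,16,24,0,0)$ with $f=63$, $(4,15,24,0,0)$ with $f=\frac{251}{4}$, and $(4,16,23,0,0)$ with $f=\frac{187}{3}$. The first of these immediately yields $s(V)\le\frac{1}{2}$.

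For the classification of statistics with $s(V)\ge -\frac{1}{2}$, I would enumerate all feasible $(a,b,c,d,0)$ with $f\ge 62$. Every such statistic is dominated componentwise by one of the three Pareto-optima above, and decreasing $a,b,c,d$ by one unit drops $f$ by $1,\frac{5}{4},\frac{5}{3},\frac{5}{2}$ respectively. The permitted decrease is therefore at most $1$, $\frac{3}{4}$, or $\frac{1}{3}$ in the three cases, so in the latter two only the Pareto-optima themselves survive, while from $(3,16,24,0,0)$ one additionally obtains $(2,16,24,0,0)$ by dropping one point from the $a$-coordinate (feasibility is automatic since this statistic is dominated by a feasible one). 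Computing $s(V)$ and $|A\cap V|$ for each of these four statistics reproduces Types 1 through 4 exactly.

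The main obstacle is simply the bookkeeping: Figure~\ref{4d-pareto} contains several hundred entries, and one must confirm that no Pareto-optimum with $a\ge 5$, or with nonzero $d$ or $e$, slips past the $f\ge 62$ threshold. Combining the coarse bound $a+b+c+d+e\le c'_{4,3}=43$ with the weight imbalance in $f$ (favoring $c$ and $d$ while $a,b\le 16,32$) makes the scan finite and straightforward, albeit tedious.
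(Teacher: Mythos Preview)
Your approach is correct and logically complete, but it takes a genuinely different route from the paper.

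The paper does not scan the Pareto table directly. Instead it exploits the standing constraint $41\le |A\cap V|\le 43$ (equation~\eqref{41}) and breaks into three cases according to $|A\cap V|$. In each case it invokes only a handful of facts from Proposition~\ref{stat} (e.g.\ $d(V)=0$ when $|A\cap V|=43$, $d(V)\le 2$ when $|A\cap V|=42$) together with the linear inequalities \eqref{alpha-1}, \eqref{alpha-2}, and then rewrites $s(V)$ algebraically so that the slack in those inequalities appears with negative sign. For the sub-cases $d(V)=1,2$ it also uses short structural arguments (e.g.\ a single $S_{1,4}$ point forces at least three $S_{2,4}$ points to be missing) rather than the table. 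This keeps the proof essentially self-contained modulo Proposition~\ref{stat}.

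Your route is cleaner in concept: since $f$ is monotone, feasibility is downward closed, and the Pareto table is already available, the whole proposition really does reduce to checking which Pareto-optima satisfy $f\ge 62$. You also do not need the bound $|A\cap V|\ge 41$ at all; you are in effect proving a slightly stronger statement about arbitrary four-dimensional Moser sets with $e=0$. The trade-off is that you lean on the full computer-generated content of Figure~\ref{4d-pareto} (several hundred entries), whereas the paper only needs the few summary bounds packaged in Proposition~\ref{stat} plus elementary counting. One small point worth making explicit in your write-up: the dominating Pareto-optimum of a feasible tuple with $e=0$ could a priori have $e^*=1$, so your scan must cover those rows too (and it does---all $e=1$ entries have $f$ well below $62$).
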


\begin{proof}
Let $V$ be a side slice.  From \eqref{41} we have
$$ 41 \leq a(V)+b(V)+c(V)+d(V) = |A \cap V| \leq 43.$$
First suppose that $|A \cap V| = 43$, then from Proposition \ref{stat}(ii), (viii), $d(V)=0$ and $b(V) \geq 15$.
Also, we have the trivial bound $c(V) \leq 24$, together with the inequality
$$ 3b(V) + 2c(V) \leq 96$$
from \eqref{alpha-1}.  To exploit these facts, we rewrite $s(V)$ as
$$ s(V) = \frac{1}{2} - \frac{1}{2}( 24 - c(V) ) - \frac{1}{12} (96-3b(V)-2c(V)).$$
Thus $s(V) \leq 1/2$ in this case.  If $s(V) \geq -1/2$, then
$$ 6 (24-c(V)) + (96-3b(V)-2c(V)) \leq 12,$$
which together with the inequalities $b(V) \leq 15$, $c(V) \leq 24$, $3b(V)+2c(V) \leq 96$ we conclude that $(b(V),c(V))$ must be one of $(16,24)$, $(15, 24)$, $(16, 23)$, $(15, 23)$.  The first three possibilities lead to Types 4,3,2 respectively.  The fourth type would lead to $(a(V),b(V),c(V),d(V),e(V)) = (5,15,23,0,0)$, but this contradicts \eqref{eleven}.

Next, suppose $|A \cap V| = 42$, so by Proposition \ref{stat}(iii) we have $d(V) \leq 2$.  From \eqref{alpha-1} we have
\begin{equation}\label{2cd}
2c(V) + 3d(V) \leq 48
\end{equation}
while from \eqref{alpha-2} we have
\begin{equation}\label{3cd}
3b(V)+2c(V)+3d(V) \leq 96
\end{equation}
and so we can rewrite $s(V)$ as
\begin{equation}\label{sv2}
s(V) = -\frac{1}{2} - \frac{1}{4}( 48 - 2c(V) - 3d(V) ) - \frac{1}{12} (96-3b(V)-2c(V)-3d(V)) + \frac{1}{2} d(V).
\end{equation}
This already gives $s(V) \leq 1/2$.  If $d(V)=0$, then $s(V) \leq -1/2$, with equality only in Type 1.  If $d(V)=1$, then the set $A' \subset [3]^4$ corresponding to $A \cap V$ contains a point in $S_{3,4}$, which without loss of generality we can take to be $2221$.  Considering the three lines $*221$, $2*21$, $22*1$, we see that at least three points in $S_{2,4}$ must be missing from $A'$, thus $c(V) \leq 21$.  This forces $48-2c(V)-3d(V) \geq 3$, and so $s(V) < -3/4$.  Finally, if $d(V)=2$, then $A'$ contains two points in $S_{3,4}$.  If they are antipodal (e.g. $2221$ and $2223$), the same argument as above shows that at least six points in $S_{2,4}$ are missing from $A'$; if they are not antipodal (e.g. $2221$ and $2212$) then by considering the lines $*221$, $2*21$, $22*1$, $*212$, $2*12$ we see that five points are missing.  Thus we have $c(V) \leq 19$, which forces $48-2c(V)-3d(V) \geq 4$.  This forces $s(V) \leq -1/2$, with equality only when $c(V)=19$ and $3b(V)+2c(V)+3d(V)=96$, but this forces $b(V)$ to be the non-integer $52/3$, a contradiction, which concludes the treatment of the $|A \cap V|=42$ case.

Finally, suppose $|A \cap V| = 41$.  Using \eqref{2cd}, \eqref{3cd} as before we have
\begin{equation}\label{sv3}
 s(V) = -\frac{3}{2} - \frac{1}{4}( 48 - 2c(V) - 3d(V) ) - \frac{1}{12} (96-3b(V)-2c(V)-3d(V)) + \frac{1}{2} d(V),
\end{equation}
while from Proposition \ref{stat}(vi) we have $d(V) \leq 3$.  This already gives $s(V) \leq 0$, and $s(V) \leq -1$ when $d(V)=1$.  In order to have $s(V) \geq -1/2$, we must then have $d(V)=2$ or $d(V)=3$.  But then the arguments of the preceding paragraph give $48-2c(V)-3d(V) \geq 4$, and so $s(V) \leq -1$ in this case.
\end{proof}

Since the $s(V)$ average to zero, by the pigeonhole principle we may find two opposing side slices (e.g. $1****$ and $3****$), whose total $s$-value is non-negative.  Actually we can do a little better:

\begin{lemma}\label{side-off} There exists two opposing side slices whose total $s$-value is strictly positive.
\end{lemma}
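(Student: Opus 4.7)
The plan is to argue by contradiction: assume that every one of the five opposing pairs of side slices in $[3]^5$ has total $s$-value $\leq 0$. By Lemma~\ref{dci}, the sum of $s(V)$ over all ten side slices equals $5(a(A)+b(A)+c(A)+d(A)) - 10 \cdot \tfrac{125}{2} = 5 \cdot 125 - 625 = 0$ (using $e(A)=f(A)=0$ from Lemmas~\ref{fvan} and~\ref{evan}). Under the contrary assumption, each of the five pairs would therefore be forced to sum to exactly zero.

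I would then invoke Proposition~\ref{suv}, which tells us that $s(V) \leq 1/2$ uniformly, and that any side slice with $s(V) \geq -1/2$ must be one of the four listed Types, with $s$-values $-1/2, -1/6, 1/4, 1/2$ respectively. In an opposing pair summing to zero, neither slice can have $s < -1/2$, since then its partner would need $s > 1/2$, violating the uniform upper bound. Hence both members of every opposing pair fall into Types 1--4. Among the four admissible values, the only pair summing to zero is $(-1/2,1/2)$; so every one of the five opposing pairs must consist of exactly one Type~1 slice and one Type~4 slice.

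The clinching observation is that both Type~1 and Type~4 have $c(V) = 24 = |S_{2,4}|$, so for every side slice $V$ the set $A$ contains the whole of $S_{3,5} \cap V$ (viewed inside the 4-cube $V$). Since any $x \in S_{3,5}$ has three coordinates in $\{1,3\}$, it lies in at least one side slice, and therefore belongs to $A$. Thus $S_{3,5} \subseteq A$, forcing $c(A) = 80$, which directly contradicts Corollary~\ref{cmax}. The only step requiring care is the enumeration of zero-sum pairs of $s$-values from Proposition~\ref{suv}; since only four types are available, this is a short case check rather than a real obstacle.
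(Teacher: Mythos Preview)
Your proof is correct and follows essentially the same route as the paper: argue by contradiction, use that the $s(V)$ average to zero (a consequence of Lemma~\ref{dci} together with $e(A)=f(A)=0$) to force each opposing pair to sum exactly to zero, then invoke Proposition~\ref{suv} to conclude every such pair is Type~1/Type~4, whence $c(V)=24$ for all side slices and $c(A)=80$, contradicting Corollary~\ref{cmax}. You simply spell out more of the routine steps (the zero-sum enumeration, and the direct argument that $S_{3,5}\subseteq A$ in place of the paper's appeal to Lemma~\ref{dci}).
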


\begin{proof} If this is not the case, then we must have $s(1****)+s(3****)=0$ and similarly for permutations.  Using Proposition \ref{suv} we thus see that for every opposing pair of side slices, one is Type 1 and one is Type 4.  In particular $c(V)=24$ for all side slices $V$.  But then by Lemma \ref{dci} we have $c(A)=80$, contradicting Lemma \ref{cmax}.
\end{proof}

Let $V, V'$ be the side slices in Lemma \ref{side-off}
By Proposition \ref{suv}, the $V, V'$ slices must then be either Type 2, Type 3, or Type 4, and they cannot both be Type 2.  Since $a(A) = a(V)+a(V')$, we conclude
\begin{equation}\label{amix}
6 \leq a(A) \leq 8.
\end{equation}
In a similar spirit, we have
$$ c(V) + c(V') \leq 23+24.$$
On the other hand, by considering the $24$ lines connecting $c$-points of $V, V'$ to $c$-points of the centre slice $W$ between $V$ and $V'$, each of which contains at most two points in $A$, we have
$$ c(V) + c(W) + c(V') \leq 24 \times 2.$$
Thus $c(W) \leq 1$; since
$$ d(A) = d(V) + d(V') + c(W)$$
we conclude from Proposition \ref{suv} that $d(A) \leq 1$.  Actually we can do better:

\begin{lemma} $d(A)=0$.
\end{lemma}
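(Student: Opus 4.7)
The plan is to argue by contradiction. Assume $d(A) = 1$ and let $p^* \in A$ be the unique point of $S_{2,5}$; by geometric symmetry I may take $p^* = 11222$. The structural observation driving everything is that $p^*$ forces strong constraints on each of the five coordinate directions: in the two directions where $p^*$ has a non-$2$ entry (coordinates $1$ and $2$), the side slice containing $p^*$ inherits it as a $4$-dim $d$-point, while the opposite side slice and the centre slice have $d = 0$; in the three directions where $p^*$ has a $2$ (coordinates $3$, $4$, $5$), the centre slice inherits $p^*$ as a $4$-dim $c$-point, forcing the centre slice's $c$-statistic to equal $1$ there.

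First I would pin down the slice sizes in direction $1$. By scanning Lemma \ref{paretop-4}, the only Pareto-optimal of size $\geq 42$ with $d \geq 1$ is $(6,16,18,2,0)$, and it cannot dominate any statistic of size $42$ with $d = 1$ (indeed $6+16+18 = 40 < 41$). Hence no $4$-dim Moser set of size $42$ has $d = 1$, so the side slice containing $p^*$ has size $\leq 41$. Combined with the identity that the three parallel slices sum to $125$, the general bound $|V| \leq 43$ for side slices, and the already-established $|W| \leq 41$ for centre slices, this forces the sizes to be $41$, $43$ and $41$ in direction $1$, and the same in direction $2$.

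Next I would refine the $s$-value of the $d$-containing slice. Three lines in that slice through $p^*$ (with wildcards in the remaining three coordinates of the $4$-dim slice, each taking $p^*$ as midpoint) give three pairs of $c$-endpoints which cannot both lie in $A$, so $c \leq 21$ in this slice by exactly the argument used in Proposition \ref{suv}. Substituting $|V|=41$, $d=1$, $c \leq 21$ and the inequality $3b + 2c + 3d \leq 96$ from \eqref{3cd} into the identity \eqref{sv3} gives $s \leq -\frac{7}{4}$ for this slice. The opposing side slice (of size $43$, $d=0$) must be one of the four size-$43$ Pareto-optimals, all of which satisfy $s \leq \frac{1}{2}$; so the two side slices in direction $1$ contribute at most $-\frac{5}{4}$ to $\sum_V s(V)$, and symmetrically in direction $2$.

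For each direction $i \in \{3,4,5\}$, the $24$ lines whose midpoint is a $c$-point of the centre slice give $c(V) + c(V') + c(W) \leq 48$, and $c(W) = 1$ forces $c(V) + c(V') \leq 47$. Since Types $1$, $3$, $4$ of Proposition \ref{suv} all have $c = 24$, this rules out both opposing side slices simultaneously having $c = 24$, and Proposition \ref{suv} then bounds $s(V) + s(V') \leq \frac{1}{2} - \frac{1}{6} = \frac{1}{3}$ (the maximum attained by a Type $4$ paired with a Type $2$). Summing over all ten side slices,
\[
0 \;=\; \sum_V s(V) \;\leq\; 2 \left( -\frac{5}{4} \right) + 3 \cdot \frac{1}{3} \;=\; -\frac{3}{2},
\]
a contradiction, so $d(A) = 0$. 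The main obstacle I foresee is the very first step: the bound $|V| \leq 41$ on the $d$-containing side slice hinges on the completeness of the $4$-dim Pareto table in Lemma \ref{paretop-4}; the rest of the argument is mechanical bookkeeping with $s$.
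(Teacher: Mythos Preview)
Your proof is correct and noticeably shorter than the paper's, and the difference is substantive. The divergence is precisely where you flag it: the first step. You exploit the full four-dimensional Pareto table (Lemma~\ref{paretop-4}) to observe that no four-dimensional Moser set of size $42$ can have $d=1$ (indeed, the only Pareto-optimal with $d\geq 1$ and $a+b+c\geq 41$ would have to have size $\geq 42$, and a scan of the table shows the unique such vector is $(6,16,18,2,0)$, which gives $a+b+c\leq 40$). This immediately forces $|A\cap 1{****}|=|A\cap {*}1{***}|=41$, after which the bound $s\leq -7/4$ on each of these slices combines with the $1/3$ bounds in the three ``$2$-directions'' to give $\sum_V s(V)\leq -3/2<0$ directly.

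The paper, by contrast, works only from the summarised Proposition~\ref{stat} and keeps the case $|A\cap 1{****}|=42$ with $d=1$ open. It then uses a pigeonhole step to get $s(1{****})\geq -1$, narrows the possible statistics to $(4,16,21,1,0)$ or $(3,17,21,1,0)$, eliminates the latter by an $a(A)$-count, and finally dispatches the former by a structural argument about $A\cap S_{1,5}$ splitting as $S_{1,5}^e$ versus $S_{1,5}^o$. In fact your observation shows this entire $|V|=42$ branch of the paper's argument is vacuous: the statistics $(4,16,21,1,0)$ and $(3,17,21,1,0)$ are not feasible. Your approach buys a much cleaner proof at the cost of a heavier reliance on the computer-verified Pareto data; the paper's route uses less of that table but pays with a longer ad hoc case analysis.

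One inconsequential slip: there are three size-$43$ Pareto-optimals (Types~2,~3,~4 of Proposition~\ref{suv}), not four; but all you actually need is $s(V')\leq 1/2$, which Proposition~\ref{suv} gives directly.
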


\begin{proof} Suppose for contradiction that $d(A)=1$; without loss of generality we may take $11222 \in A$.  This implies that $d(1****)=d(*1***)=1$.  Also, by the above discussion, $c(**1**)$ and $c(**3**)$ cannot both be $24$, so by Proposition \ref{suv}, $s(**1**)+s(**3**) \leq 1/3$; similarly
$s(***1*)+s(***3*) \leq 1/3$ and $s(****1)+s(****3) \leq 1/3$.  Since the $s$ average to zero, we see from the pigeonhole principle that either $s(1****)+s(3****) \geq -1/2$ or $s(*1***)+s(*3***) \geq -1/2$.  We may assume by symmetry that 
\begin{equation}\label{star-2}
s(1****)+s(3****) \geq -1/2.
\end{equation}
Since $s(3****) \leq 1/2$ by Proposition \ref{suv}, we conclude that
\begin{equation}\label{star}
 s(1****) \geq -1.
\end{equation}
If $|A \cap 1****|=41$, then by \eqref{sv3} we have
$$ s(1****) = -1 - \frac{1}{4}( 48 - 2c(1****) - 3d(1****) ) - \frac{1}{12} (96-3b(1****)-2c(1****)-3d(1****))$$
but the arguments in Proposition \ref{suv} give $48 - 2c(1****) - 3d(1****) \geq 3$ and $96-3b(1****)-2c(1****)-3d(1****) \geq 0$, a contradiction.  So we must have $|A \cap 1****|=42$ (by Proposition \ref{stat}(ii) and \eqref{41}).  In that case, from \eqref{sv2} we have
$$ s(1****) = \frac{1}{4}( 48 - 2c(1****) - 3d(1****) ) - \frac{1}{12} (96-3b(1****)-2c(1****)-3d(1****))$$
while also having $48 - 2c(1****) - 3d(1****) \geq 3$ and $96-3b(1****)-2c(1****)-3d(1****) \geq 0$.  Since $s(1****) \geq -1$ and $d(1****)=1$, we soon see that we must have $48 - 2c(1****) - 3d(1****) = 3$ and $96-3b(1****)-2c(1****)-3d(1****) \leq 3$, which forces $c(1****)=21$ and $b(1****)=16$ or $b(1****)=17$; thus the statistics of $1****$ are either $(4,16,21,1,0)$ or $(3,17,21,1,0)$.

We first eliminate the $(3,17,21,1,0)$ case.  In this case $s(1****)$ is exactly $-1$.  Inspecting the proof of \eqref{star}, we conclude that $s(3****)$ must be $+1/2$ and that $s(**1**)+s(**3**)=1/3$.  From the former fact and Proposition \ref{suv} we see that $a(A) = a(1****)+a(3****)=3+3=6$; on the other hand, from the latter fact and Proposition \ref{suv} we have $a(A) = a(**1**)+a(**3**) = 4+3=7$, a contradiction.

So $1****$ has statistics $(4,16,21,1,0)$, which implies that $s(1****)=-3/4$ and $|A \cap 1****|=42$.  By \eqref{star-2} we conclude 
\begin{equation}\label{s3}
s(3****) \geq 1/4,
\end{equation} 
which by Proposition \ref{suv} implies that $|A \cap 3****|=43$, and hence $|A \cap 2****|=40$.  On the other hand, since $e(A)=f(A)=0$ and $d(A)=1$, with the latter being caused by $11222$, we see that $c(2****)=d(2****)=e(2****)=0$.  From \eqref{alpha-1} we have $4a(2****)+b(2****) \leq 64$, and we also have the trivial inequality $b(2****) \leq 32$; these inequalities are only compatible if $2****$ has statistics $(8,32,0,0,0)$, thus $A \cap 2****$ contains $S_{2,5} \cap 2****$.

If $a(3****)=4$, then $a(A)=a(1****)+a(3****)=8$, which by Proposition \ref{suv} implies that $s(**1**)+s(**3**)$ cannot exceed $1/12$, and similarly for permutations.  On the other hand, from Proposition \ref{suv} $s(**1**)+s(**3**)$ cannot exceed $-3/4 + 1/4 = -1/2$, and so the average value of $s$ cannot be zero, a contradiction.  Thus $a(3****) \neq 4$, which by \eqref{s3} and Proposition \ref{suv} implies that $**3**$ has statistics $(3,16,24,0,0)$.

In particular, $A$ contains $16$ points from $3**** \cap S_{1,5}$ and all of $3**** \cap S_{2,5}$.  As a consequence, no pair of the $16$ points in $A \cap 3**** \cap S_{1,5}$ can differ in only one coordinate; partitioning the $32$-point set $3**** \cap S_{1,5}$ into $16$ such pairs, we conclude that every such pair contains exactly one element of $A$.  We conclude that $A \cap 3**** \cap S_{1,5}$ is equal to either $3**** \cap S_{1,5}^e$ or $3**** \cap S_{1,5}^o$.

On the other hand, $A$ contains all of $2**** \cap S_{2,5}$, and exactly sixteen points from $1**** \cap S_{1,5}$.  Considering the vertical lines $*xyzw$ where $xyzw \in S_{1,4}$, we conclude that $A \cap 1**** \cap S_{1,5}$ is either equal to $1**** \cap S_{1,5}^o$ or $1**** \cap S_{1,5}^e$.
But either case is incompatible with the fact that $A$ contains $11222$ (consider either the line $11xx2$ or $11x\overline{x}2$, where $x=1,2,3$ and $\overline{x}=4-x$), obtaining the required contradiction.
\end{proof}

We can now eliminate all but three cases for the statistics of $A$:

\begin{proposition}[Statistics of $A$]  The statistics $(a(A),b(A),c(A),d(A),e(A),f(A))$ of $A$ must be one of the following three tuples:
\begin{itemize}
\item (Case 1) $(6,40,79,0,0)$;
\item (Case 2) $(7,40,78,0,0)$;
\item (Case 3) $(8,39,78,0,0)$.
\end{itemize}
\end{proposition}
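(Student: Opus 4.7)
The plan is to exploit the pair of opposing side slices $V, V'$ produced by Lemma \ref{side-off}, together with the center slice $W$ between them, so that $(a(A), b(A), c(A))$ are pinned down by a short case analysis on the types of $V, V'$.

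First I will observe that since $s(V) + s(V') > 0$, Proposition \ref{suv} forces each of $V, V'$ to be of Type 1, 2, 3, or 4; moreover, Type 1 has $s = -1/2$ (so it cannot pair with anything to yield a positive sum, since $s$ is bounded above by $+1/2$), and the pair (Type 2, Type 2) has $s$-sum $-1/3$. The admissible unordered type-pairs are therefore exactly $\{4,4\}$, $\{4,3\}$, $\{4,2\}$, $\{3,3\}$, $\{3,2\}$, giving $a(V) + a(V') = 6, 7, 7, 8, 8$ respectively, consistent with \eqref{amix}.

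Next, each of Types 2, 3, 4 has $|A \cap V| = 43$, so $|A \cap W| = 125 - 43 - 43 = 39$. Using that $d(A) = e(A) = f(A) = 0$ and that Types 2, 3, 4 all have $d(V) = e(V) = 0$, the obvious slicewise identities
\[
d(A) = d(V) + d(V') + c(W), \quad e(A) = e(V) + e(V') + d(W), \quad f(A) = e(W)
\]
force $c(W) = d(W) = e(W) = 0$, hence $a(W) + b(W) = 39$. The four-dimensional instance of \eqref{alpha-1}, namely $4\,a(W) + b(W) \leq 64$, combined with $b(W) \leq |S_{3,4}| = 32$, then pins down $a(W) \in \{7, 8\}$. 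The analogous slicewise identities $b(A) = b(V) + b(V') + a(W)$ and $c(A) = c(V) + c(V') + b(W)$ allow me to tabulate $(a(A), b(A), c(A))$ across the ten combinations of admissible type-pair and $a(W) \in \{7, 8\}$.

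Finally, Corollary \ref{cmax} supplies $c(A) \leq 79$ always and $c(A) \leq 78$ when $a(A) \geq 7$, which eliminates seven of the ten sub-cases; the three survivors are the pair $\{4, 4\}$ with $a(W) = 8$, giving $(6, 40, 79)$, the pair $\{4, 2\}$ with $a(W) = 8$, giving $(7, 40, 78)$, and the pair $\{3, 2\}$ with $a(W) = 8$, giving $(8, 39, 78)$, matching the three tuples listed in the proposition. The only obstacle is purely bookkeeping: checking that every eliminated sub-case produces $c(A) \in \{79, 80\}$ strictly above the relevant threshold in Corollary \ref{cmax}; no further combinatorial idea is needed.
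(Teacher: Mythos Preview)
Your argument is correct, and it takes a genuinely different route from the paper's.  The paper does not use the special slicing $V,V',W$ coming from Lemma \ref{side-off} at this stage; instead it argues that for \emph{every} centre slice $W$ one has $c(W)=d(W)=e(W)=0$, $|A\cap W|\ge 39$, and hence (by $4a(W)+b(W)\le 64$ and $b(W)\le 32$) $a(W)\in\{7,8\}$, $b(W)\in\{31,32\}$.  It then averages over the five centre slices via Lemma \ref{dci} to obtain $35\le b(A)\le 40$ and $77.5\le c(A)\le 80$, and finishes by combining $a+b+c=125$, \eqref{amix}, and Corollary \ref{cmax}.

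Your approach, by contrast, fixes the single slicing direction supplied by Lemma \ref{side-off}, reads off $a(A)=a(V)+a(V')$, $b(A)=b(V)+b(V')+a(W)$, $c(A)=c(V)+c(V')+b(W)$ exactly from the type data, and then uses Corollary \ref{cmax} to prune the resulting ten sub-cases.  This trades the averaging step (Lemma \ref{dci}) for a finer use of the type classification in Proposition \ref{suv}; it is slightly more hands-on but avoids any appeal to double counting over all directions, and the bookkeeping is indeed short.  Both arguments are of comparable length, and both ultimately rest on the same three inputs: $d(A)=e(A)=f(A)=0$, the four-dimensional inequalities constraining $(a(W),b(W))$, and Corollary \ref{cmax}.
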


\begin{proof}
Since $d(A)=e(A)=f(A)=0$, we have
$$ c(2****)=d(2****)=e(2****)=0.$$
On the other hand, from \eqref{alpha-1} we have $4a(2****)+b(2****) \leq 64$ as well as the trivial inequality $b(2****) \leq 24$, and also we have
$$ |A \cap 2****| = 125 - |A \cap 1****| - |A \cap 3****| \geq 125 - 43 - 43 = 39.$$
Putting all this together, we see that the only possible statistics for $2****$ are $(8,32,0,0,0)$, $(7,32,0,0,0)$, or $(8,31,0,0,0)$.  In particular, $7 \leq a(2****) \leq 8$ and $31 \leq b(2****) \leq 32$, and similarly for permutations. Applying Lemma \ref{dci} we conclude that
$$ 35 \leq b(A) \leq 40$$
and
$$ 77.5 \leq c(A) \leq 80.$$
Combining this with the first part of Corollary \ref{cmax} we conclude that $c(A)$ is either $78$ or $79$.  From this and \eqref{amix} we see that the only cases that remain to be eliminated are $(7,39,79,0,0)$ and $(8,38,79,0,0)$, but these cases are incompatible with the second part of Corollary \ref{cmax}.
\end{proof}

We now eliminate each of the three remaining cases in turn.

\subsection{Elimination of $(6,40,79,0,0)$}

Here $A \cap S_{5,5}$ has six points.  By Lemma \ref{tech}, there are at least two pairs in this set which differ in two positions.  Their midpoints are eliminated from $A \cap S_{3,5}$.  But $A$ omits exactly one point from $S_{3,5}$, so these midpoints must be the same.  By symmetry, we may then assume that these two pairs are $(11111,11133)$ and $(11113,11131)$.  Thus the eliminated point in $S_{3,5}$ is $11122$, i.e. $A$ contains $S_{3,5} \backslash \{11122\}$.  Also, $A$ contains $\{11111,11133,11113,11131\}$ and thus must omit $\{11121, 11123, 11112, 11132\}$.

Since $11322 \in A$, at most one of  $11312, 11332$ lie in $A$. By symmetry we may assume $11312 \not \in A$, thus there is a pair $(xy1z2, xy3z2)$ with $x,y,z = 1,3$ that is totally omitted from $A$, namely $(11112,11312)$. On the other hand, every other pair of this form can have at most one point in the $A$, thus there are at most seven points in $A$ of the form $xyzw2$ with $x,y,z,w = 1,3$. Similarly there are at most 8 points of the form $xyz2w$, or of $xy2zw$, $x2yzw$, $2xyzw$, leading to $b(A) \leq 7+8+8+8+8=39$, contradicting the statistic $b(A)=40$.

\subsection{Elimination of $(7,40,78,0,0)$}

Here $A \cap S_{5,5}$ has seven points.  By Lemma \ref{tech}, there are at least three pairs in this set which differ in two positions.  As we can only eliminate two points from $S_{3,5}$, two of the midpoints of these pairs must be the same; thus, as in the previous section, we may assume that $A$ contains $\{11111,11133,11113,11131\}$ and omits $\{11121, 11123, 11112, 11132\}$ and $11122$.

Now consider the $160$ lines $\ell$ connecting two points in $S_{4,5}$ to one point in $S_{3,5}$ (i.e. $*2xyz$ and permutations, where $x,y,z=1,3$).  By double counting, the total sum of $|\ell \cap A|$ over all $160$ lines is $4b(A)+2c(A) = 316 = 158 \times 2$.  On the other hand, each of these lines contain at most two points in $A$, but two of them (namely $1112*$ and $1112*$) contain no points.  Thus we must have $|\ell \cap A|=2$ for the remaining $158$ lines $\ell$.

Since $A$ omits $1112x$ and $111x2$ for $x=1,3$, we thus conclude (by considering the lines $11*2x$ and $11*x2$) that $A$ must contain $1132x$, $113x2$, $1312x$, and $131x2$.  Taking midpoints, we conclude that $A$ omits $11322$ and $13122$.  But together with $11122$ this implies that at least three points are missing from $A \cap S_{3,5}$, contradicting the hypothesis $c(A)=78$.

\subsection{Elimination of $(8,39,78,0,0)$}

Now $A \cap S_{5,5}$ has eight points.  By Lemma \ref{tech}, there are at least three pairs in this set which differ in two positions.  As we can only eliminate two points from $S_{3,5}$, two of these pairs $(a,b), (c,d)$ must have the same midpoint $p$, and two other pairs $(a',b'), (c',d')$ must have the same midpoint $p'$, and $A$ contains $S_{3,5} \backslash \{p,p'\}$.  As $p,p'$ are distinct, the plane containing $a,b,c,d$ is distinct from the plane containing $a',b',c',d'$.

Again consider the $160$ lines $\ell$ from the previous section.  This time, the sum of the $|\ell \cap A|$ is $4b(A)+2c(A) = 312 = 156 \times 2$.
But the two lines in the plane of $a,b,c,d$ passing through $p$, and the two lines in the plane of $a',b',c',d'$ passing through $p'$, have no points; thus we must have $|\ell \cap A|=2$ for the remaining $156$ lines $\ell$.

Without loss of generality we have $(a,b)=(11111,11133)$, $(c,d) = (11113,11131)$, thus $p = 11122$. By permuting the first three indices, we may assume that $p'$ is not of the form $x2y2z, x2yz2, xy22z, xy2z2$ for any $x,y,z=1,3$. Then we have $1112x \not \in A$ and $1122x \in A$ for every $x=1,3$, so by the preceding paragraph we have $1132x \in A$; similarly for $113x2, 1312x, 131x2$. Taking midpoints, this implies that $13122, 11322 \not \in A$, but this (together with 11122) shows that at least three points are missing from $A \cap S_{3,5}$, contradicting the hypothesis $c(A)=78$.

\subsection{Six dimensions}

Now we establish the bound $c'_{6,3}=353$.  In view of the lower bounds, it suffices to show that there does not exist a Moser set $A \subset [3]^5$ with $|A|=354$.

We argue by contradiction.  Let $A$ be as above, and let $(a(A),\ldots,g(A))$ be the statistics of $A$.

\begin{lemma}\label{g6} $g(A)=0$.
\end{lemma}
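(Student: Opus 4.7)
The plan is to assume $g(A)=1$ for contradiction and to combine sphere-by-sphere antipodal bounds, the propagated inequality \eqref{six}, and a 4D centre-subspace estimate based on Lemma \ref{paretop-4}. First, the antipodal-pair argument from the proof of Lemma \ref{fvan}, applied to each sphere $S_{k,6}$ separately, yields $a \leq 32$, $b \leq 96$, $c \leq 120$, $d \leq 80$, $e \leq 30$, $f \leq 6$. On their own these force only $|A| \leq 365$, leaving a gap of $12$ to be closed.

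Next I would invoke \eqref{six} in the form $(r,q)=(0,2)$ (valid in 6D since $r+3q=6\leq n$), which reads $4\alpha_0(A)+4\alpha_2(A)+3\alpha_4(A)+\alpha_6(A) \leq 6$. Substituting $\alpha_6=g=1$ together with $\alpha_0=a/64$, $\alpha_2=c/240$, $\alpha_4=e/60$ and clearing denominators yields the key linear inequality $15a+4c+12e \leq 1200$.

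The crucial additional constraint comes from four-dimensional slicing. For each pair $\{j,k\}\subset\{1,\ldots,6\}$, the 4D subcube $T_{j,k}:=\{p\in[3]^6:p_j=p_k=2\}$ contains the image $2222$ of $222222$, so $A\cap T_{j,k}$ is a 4D Moser set with $e$-statistic equal to $1$. Inspecting the Pareto-optimal table of Lemma \ref{paretop-4}, the maximum value of $a+b+c+d+e$ among Pareto-optimal quintuples with $e=1$ is $36$, so $|A\cap T_{j,k}|\leq 36$. Summing over the $\binom{6}{2}=15$ such subcubes -- a point $p\in A$ with $s$ twos lies in exactly $\binom{s}{2}$ of them -- gives $c+3d+6e+10f+15g \leq 15\cdot 36 = 540$, and hence $c+3d+6e+10f \leq 525$.

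These three sets of constraints are designed to be jointly incompatible with $a+b+c+d+e+f=353$. A short linear-programming check shows that the maximum of $a+b+c+d+e+f$ under the listed bounds is $352$, attained at $(32,96,120,80,20,4)$ where both $15a+4c+12e\leq 1200$ and the 4D bound are essentially tight; this forces $|A| \leq 353$, contradicting $|A|=354$. The hardest part is recognising that the sharp 4D bound $\leq 36$ -- rather than the weaker $\leq 39$ that would follow from Proposition \ref{stat}(i) alone -- is what actually closes the arithmetic, so Lemma \ref{paretop-4} must be used in its full form; this matches the paper's remark that the determination of $c'_{6,3}$ requires exactly one non-trivial computation.
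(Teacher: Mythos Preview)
Your argument is essentially correct, but it takes a longer route than the paper's.  Both proofs hinge on the same geometric observation: the fifteen ``centre'' four-dimensional subspaces $T_{j,k}=\{p:p_j=p_k=2\}$ all contain $222222$, so if $g(A)=1$ then each $A\cap T_{j,k}$ is a four-dimensional Moser set with $e$-statistic equal to $1$, and Lemma~\ref{paretop-4} can be brought to bear.  Where you extract only the unweighted bound $a'+b'+c'+d'+e'\le 36$, the paper uses the tailored weighted sum
\[
S(V)=15a'(V)+5b'(V)+\tfrac52 c'(V)+\tfrac32 d'(V)+e'(V),
\]
whose coefficients are chosen so that the average of $S(V)$ over the fifteen centre slices is exactly $c(A)+d(A)+e(A)+f(A)+g(A)$.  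The Pareto table then gives $S(V)\le 223.5$ whenever $e'(V)=1$, so $|A|\le a(A)+b(A)+223.5\le 32+96+223.5=351.5$, already a contradiction.  Thus the paper needs only the antipodal bounds on $a,b$ and the 4D table; your approach loses information by summing with unit weights, and must then compensate with the propagated inequality~\eqref{six} and a linear program.

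One small correction: the linear programming relaxation of your system does \emph{not} have optimum $352$; it has optimum $352.5$ (e.g.\ at $(32,96,120,80,20,4.5)$).  This is harmless, since $a+\cdots+f$ is an integer and hence $\le 352$, or equivalently since $352.5<353=|A|-g$; but the statement ``the maximum is $352$'' should be replaced by ``the LP maximum is $352.5$, so by integrality $a+\cdots+f\le 352$''.  Also, your final sentence over-reads the paper's remark: Lemma~\ref{paretop-4} is the single nontrivial computation needed for the \emph{entire} $c'_{6,3}$ analysis, not specifically for this lemma.
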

  
\begin{proof} For any four-dimensional slice $V$ of $A$, define 
$$S(V) := 15 a(V) + 5 b(V) + 5 c(V)/2 + 3d(V)/2 + e(V).$$
From Lemma \ref{dci} we see that $|A|$ is equal to $a(A)+b(A)$ plus the average of $S(V)$ where $V$ ranges over the twenty slices which are some permutation of the center slice $22****$.

If $g(A)=1$, then $a(A) \leq 32$ and $b(A) \leq 96$ by \eqref{alpha-1}.  Meanwhile, $e(V)=g(A)=1$ for every center slice $V$, so from Lemma \ref{paretop-4}, one can show that $S(V) \leq 223.5$ for every such slice.  We conclude that $|A| \leq 351.5$, a contradiction.
\end{proof}

For any four-dimensional slice $V$ of $A$, define the \emph{defects} to be
$$ D(V) := 356 - [4a(V)+6b(V)+10c(V)+20d(V)+60e(V)].$$
Define a \emph{corner slice} to be one of the permutations or reflections of $11****$, thus there are $60$ corner slices.  From Lemma \ref{dci} we see that $356-|A|+f(A)=2+f(A)$ is the average of the defects of all the $60$ corner slices.  On the other hand, from Lemma \ref{paretop-4} and a straightforward computation, one concludes

\begin{lemma}\label{defects}  Let $A$ be a four-dimensional Moser set.  Then $D(A) \geq 0$.  Furthermore:
\begin{itemize}
\item If $A$ has statistics $(6,12,18,4,0)$, then $D(A)=0$.
\item If $A$ has statistics $(5,12,18,4,0)$, $(5,12,12,4,1)$, or $(6,8,12,8,0)$, then $D(A)=4$.
\item For all other $A$, $D(A) \geq 6$.
\item If $a(A) = 4$, then $D(A) \geq 8$.
\item If $a(A) \geq 7$, then $D(A) \geq 16$ (with equality iff $A$ has statistics $(7,11,12,6,0)$).
\item If $a(A) \geq 8$, then $D(A) \geq 30$.
\item If $a(A) \geq 9$, then $D(A) \geq 86$.
\end{itemize}
\end{lemma}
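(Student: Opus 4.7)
The plan is to reduce Lemma \ref{defects} to a finite inspection of the Pareto-optimal statistics catalogued in Lemma \ref{paretop-4} and Figure \ref{4d-pareto}. The key observation is that all five coefficients in the linear functional $L(a,b,c,d,e) := 4a+6b+10c+20d+60e$ are strictly positive. Hence, given any feasible statistics $(a,b,c,d,e)$ of a Moser set $A \subset [3]^4$, one can pass to a componentwise-dominating feasible tuple without decreasing $L$; iterating, one reaches a Pareto-optimal tuple with at least as large a value of $L$. Consequently, the minimum of $D = 356 - L$ over all four-dimensional Moser sets is attained at a Pareto-optimal tuple. The same is true for the minimum of $D$ restricted to the sublevel sets $\{a \geq 4\}$, $\{a \geq 7\}$, $\{a \geq 8\}$, $\{a \geq 9\}$, since each of these conditions is preserved under Pareto-domination.

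Given this reduction, the unconditional clauses follow by evaluating $L$ on each tuple in Figure \ref{4d-pareto} and inspecting the resulting values. One verifies that $L \leq 356$ throughout, that the value $L = 356$ is attained only at $(6,12,18,4,0)$, and that the only other tuples with $L \geq 352$ are $(5,12,18,4,0)$, $(5,12,12,4,1)$, and $(6,8,12,8,0)$, each giving $L = 352$ exactly. This establishes $D(A) \geq 0$ together with the characterisation of the $D=0$ case, the enumeration of the three $D=4$ cases, and the fact that every other Pareto-optimal tuple has $L \leq 350$, whence $D \geq 6$ in all remaining cases.

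The clauses conditional on $a(A)$ are proved by the same procedure applied to the appropriate sub-lists. For each of $a = 4$ and $a \in \{7, 8, \ldots, 16\}$ (and the relevant sub-ranges), one extracts the pertinent rows of Figure \ref{4d-pareto}, computes the maximum of $L$ on that sub-list, and reads off both the corresponding lower bound for $D$ and, where appropriate, the unique extremising tuple. The role of Lemma \ref{paretop-4} is precisely to guarantee that the list of candidate extremisers is complete, so that no feasible statistic is overlooked.

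The only obstacle is the length of Figure \ref{4d-pareto}, which contains several hundred tuples and makes the verification tedious to carry out by hand. In practice I would simply tabulate $L$ on each listed tuple (in a short script or spreadsheet), sort the values in decreasing order, and then read off the top of the sorted list, both globally and restricted to each sub-range of $a$. No further conceptual input beyond Lemma \ref{paretop-4} is required; the proof is essentially a bookkeeping exercise once the Pareto-optimal classification is in hand.
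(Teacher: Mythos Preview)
Your overall strategy matches the paper's: the paper simply says ``from Lemma \ref{paretop-4} and a straightforward computation, one concludes'', and your reduction to the Pareto-optimal list via positivity of the coefficients of $L$ is exactly the right way to make that computation rigorous for the unconditional clauses and for the clauses with $a(A)\geq 7,8,9$.

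There is, however, a genuine gap in your treatment of the clause ``if $a(A)=4$ then $D(A)\geq 8$''. You replace the equality $a=4$ by the superlevel set $\{a\geq 4\}$, arguing that this set is preserved under Pareto-domination. That is true, but it does not help: the minimum of $D$ over Pareto-optimal tuples with $a\geq 4$ is $0$, attained at $(6,12,18,4,0)$, so you cannot conclude $D\geq 8$ this way. Conversely, if you restrict attention only to the $a=4$ row of Figure~\ref{4d-pareto}, you miss feasible tuples such as $(4,12,18,4,0)$ (obtained from a $(5,12,18,4,0)$ Moser set by deleting one point of $S_{4,4}$), which has $L=348$ and hence $D=8$; this tuple is not globally Pareto-optimal since it is dominated by $(5,12,18,4,0)$, so it does not appear in the table.

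The fix is short: since one may always delete points of $S_{4,4}$ from a Moser set, the feasible $(b,c,d,e)$ with $a=4$ are exactly those with $(a',b,c,d,e)$ feasible for some $a'\geq 4$. Hence
\[
\max_{a(A)=4} L(A) \;=\; 16 \;+\; \max_{\substack{(a',b,c,d,e)\text{ Pareto-optimal}\\ a'\geq 4}} \bigl(6b+10c+20d+60e\bigr),
\]
and one checks from the table that the right-hand maximum is $332$ (at $(6,12,18,4,0)$), giving $L\leq 348$ and $D\geq 8$ as required.
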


Define a \emph{family} to be a set of four parallel corner slices, thus there are $15$ families, which are all a permutation of $\{11****, 13****, 31****, 33**** \}$.  We refer to the family $\{11****, 13****, 31****, 33**** \}$ as $ab****$, and similarly define the family $a*b***$, etc.

\begin{lemma}\label{f6} $f(A)=0$.
\end{lemma}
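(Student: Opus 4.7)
The plan is to mirror the proof of Lemma \ref{g6}. Assume for contradiction that $f(A)\ge 1$ and, by the geometric symmetries of $[3]^6$, take $p := 122222 \in A$. As in the identity derivation just above Lemma \ref{f6}, the total defect over the 60 corner slices satisfies the exact identity $\sum_V D(V) = 60(2+f(A)) \ge 180$.

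The key structural input is that the side slice $V := 1*****$, regarded as a 5D Moser set, contains its 5D centre $22222$ (namely $p$). By the proof of Lemma \ref{fvan}, each of the $121$ antipodal pairs through this centre contributes at most one point to $A$, yielding the sharp sphere bounds
\[ a(V) \le 16,\quad b(V) \le 40,\quad c(V) \le 40,\quad d(V) \le 20,\quad e(V) \le 5 \]
(obtained from $|V \cap S_{k,5}| \le \tfrac12|S_{k,5}|$ for $k=1,\ldots,5$). Since any point of $V \cap S_{k,6}$ lies in exactly $k-1$ of the ten 4D corner slices $V_{i,j} \subset V$ (position 1 fixed to $1$, position $i \in \{2,\ldots,6\}$ fixed to $j \in \{1,3\}$), these bounds propagate to the weighted sums $\sum_{i,j} c(V_{i,j}) = 3c(V) \le 120$, $\sum_{i,j} d(V_{i,j}) = 2d(V) \le 40$, $\sum_{i,j} e(V_{i,j}) \le 5$, and analogous upper bounds on $\sum_{i,j} a(V_{i,j})$ and $\sum_{i,j} b(V_{i,j})$.

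The heart of the proof is then a feasibility analysis against Lemmas \ref{defects} and \ref{paretop-4}. The unique defect-zero Pareto stat $(6,12,18,4,0)$ has $c = 18$, so placing all ten slices $V_{i,j}$ at this optimum would force $\sum_{i,j} c(V_{i,j}) = 180$, violating the bound $\le 120$. A short LP sweep over the three Pareto stats with $D=4$ listed in Lemma \ref{defects} (namely $(5,12,18,4,0)$, $(5,12,12,4,1)$, and $(6,8,12,8,0)$), together with selected higher-defect configurations from Lemma \ref{paretop-4}, shows that the weighted-sum constraints are incompatible with $\sum_{i,j} D(V_{i,j})$ remaining small; in fact the minimum feasible total defect on these ten slices exceeds $60(2+f(A))$, which together with $D(V) \ge 0$ on the remaining fifty corner slices contradicts the exact identity (with analogous contributions from any additional point of $S_{1,6}\cap A$, which lies in a disjoint side slice).

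The main obstacle will be making the LP step sharp: Lemma \ref{paretop-4} tabulates a large number of 4D Pareto configurations with defect above $4$, and one must carefully enumerate those compatible with the antipodal weighted-sum bounds of the previous paragraph and verify that the resulting lower bound on $\sum_{i,j} D(V_{i,j})$ is tight enough to overwhelm the $60(2+f(A))$ budget on the right-hand side. A secondary care is required when $f(A)\ge 2$ to track excess defect contributions from different points of $S_{1,6}\cap A$ without double counting across the disjoint side slices in which they respectively sit.
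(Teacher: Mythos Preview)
Your approach is genuinely different from the paper's, and unfortunately it has a real gap: the linear programme you set up over the ten corner slices inside $V=1{*}{*}{*}{*}{*}$ does \emph{not} force the total defect there to exceed $60(2+f(A))=180$ when $f(A)=1$.

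Concretely, take the ten corner slices $V_{i,j}$ to have the following feasible statistics (all appear in Lemma~\ref{paretop-4}):
\[
5\times(5,12,12,4,1),\quad 1\times(6,8,12,8,0),\quad 2\times(6,14,12,6,0),\quad 2\times(6,24,12,0,0).
\]
Then
\[
\textstyle\sum a=55\le80,\quad\sum b=144\le160,\quad\sum c=120\le120,\quad\sum d=40\le40,\quad\sum e=5\le5,
\]
so every antipodal-pair constraint you derived is satisfied, yet the defects are $5\cdot4+4+2\cdot8+2\cdot68=176<180$. Combined with $D\ge0$ on the remaining fifty corner slices, this gives only $\sum_V D(V)\ge176$, which does not contradict the identity $\sum_V D(V)=180$. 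Thus the ``short LP sweep'' you allude to cannot close the case $f(A)=1$ using only the weighted-sum bounds; the Pareto polytope is simply too permissive once $c\le12$ and $d\le4$ are imposed on average.

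The paper's proof takes a different route. It introduces a second linear functional $s(V)=12a(V)+\tfrac{15}{2}b(V)+\tfrac{20}{3}c(V)+\tfrac{15}{2}d(V)+12e(V)$ on \emph{edge} slices (permutations of $12{*}{*}{*}{*}$) and uses the drop $s(V)\le336-40$ whenever $e(V)=1$ to obtain the lower bound $a(A)\ge18+\tfrac{10}{3}f(A)$. This global information about $a(A)$ (not visible from your antipodal bounds, which only cap $a(V)$ above) is what makes the corner-slice defect analysis succeed for $f(A)\ge2$; the hardest case $f(A)=1$ then requires a further linear programme together with a case split on whether $a(A)=24$. Your antipodal constraints alone miss this $a(A)$ input, and that is precisely why the bound falls four short.
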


\begin{proof}  For any four-dimensional slice $V$ of $A$, define
$$ s(V) := 12 a(V)+15 b(V)/2+20 c(V)/3+15 d(V)/2 + 12 e(V),$$
and define an \emph{edge slice} to be one of the $60$ permutations or reflections of $12****$.  From double counting we see that $|A|-a(A)$ is equal to the average of the $60$ values of $s(V)$ as $V$ ranges over edge slices.

From Lemma \ref{paretop-4} one can verify that $s(V) \leq 336$, and that $s(V) \leq 296 = 336-40$ if $e(V)=1$.  The number of edge slices $V$ for which $e(V)=1$ is equal to $5f(A)$, and so the average value of the $s(V)$ is at most $336 - \frac{40 \times 5}{60} f(A)$, and so
$$ |A| - a(A) \leq 336 - \frac{40 \times 5}{60} f(A)$$
which we can rearrange (using $|A|=354$) as
$$ a(A) \geq 18 + \frac{10}{3} f(A).$$

Suppose first that $f(A) \geq 3$; then $a(A) \geq 28$.   Then in any given family, there is a corner slice with an $a$ value at least $9$, or four slices with $a$ value at least $7$, or two slices with $a$ value at least $8$, or one slice with $a$ value $8$ and two of $a$ value at least $7$, leading to a total defect of at least $60$ by Lemma \ref{defects}.  Thus the average defect is at least $15$; on the other hand, the average defect is $2+f(A) \leq 2+12$, a contradiction.

Now suppose that $f(A)=2$; then $a(A) \geq 25$.  This means that in any given family, one of the four corner slices has an $a$ value of at least $7$, and thus by Lemma \ref{defects} has a defect of at least $16$.  Thus the average defect is at least $4$; on the other hand, the average defect is $2+f(A)=4$.  From Lemma \ref{defects}, this implies that in any given family, three of the corner slices have statistics $(6,12,18,4,0)$ and the last one has statistics $(7,11,12,6,0)$.  But this forces $b(A)=70.5$ by double counting, which is absurd.

The remaining case is when $f(A)=1$.  Here we need a different argument.  Without loss of generality we may take $122222 \in A$.  The average defect among all $60$ slices is $2+f(A)=3$.  Equivalently, the average defect among all $15$ families is $12$.

First suppose that $a(A) \neq 24$.  Then in every family, at least one of the corner slices needs to have an $a$ value distinct from six, and so the average defect in each family is at least $4$.  Thus the five families $ab****, a*b***, a**b**, a***b*, a****b$ have an average defect of at most $28$, which implies that the ten corner slices beginning with $1$ (or equivalently, adjacent to an edge slice containing $122222$) is at most $14$.  In other words, if $(a,b,c,d,e)$ is the average of the statistics of these ten corner slices, then
$$ 4a+6b+10c+20d+60e \geq 342.$$
On the other hand, $(a,b,c,d,e)$ must lie in the convex hull of the statistics of four-dimensional Moser sets, which are described by Lemma \ref{paretop}.  Also, as $A$ contains $122222$, one has $c/24, d/8, e \leq 1/2$ by considering lines with centre $122222$.  Finally, from \eqref{six} and double-counting one has $7c/24 + 3d/8 + 3e + 1 \leq 6$.  Inserting these facts into a standard linear program yields a contradiction; indeed, the maximal value of $4a+6b+10c+20d+60e$ with these constraints is $338 \frac{2}{3}$, attained when $(a,b,c,d,e) = (\frac{17}{3}, 16, 12, 4, \frac{1}{3})$.

Finally, we consider the case when $f(A)=1$ and $a(A)=24$.  The preceding arguments allow the average defect of the ten corner slices beginning with $1$ to be as large as $18$, which implies that $4a+6b+10c+20d+60e \geq 338$.  Linear programming shows that this is not possible if $a \geq 6$, thus $a<6$.  But this forces one of the corner slices beginning with $3$ to have an $a$ value of at least $7$, and thus to have a defect of at least $16$ by Lemma \ref{paretop}.  Repeating the preceding arguments, this increases the lower bound for $4a+6b+10c+20d+60e$ by $\frac{16}{10}$, to $339.6$; but this is now inconsistent with the upper bound of $338 \frac{2}{3}$ from linear programming.
\end{proof}

As a consequence of the above lemma, we see that the average defect of all corner slices is $2$, or equivalently that the total defect of these slices is $120$.

Call a corner slice \emph{good} if it has statistics $(6,12,18,4,0)$, and \emph{bad} otherwise.  Thus good slices have zero defect, and bad slices have defect at least four.  Since the average defect of the $60$ corner slices is $2$, there are at least $30$ good slices.

One can describe the structure of the good slices completely:

\begin{lemma}\label{sixt}  The subset of $[3]^4$ consisting of the strings $1111$, $1113$, $3333$, $1332$, $1322$, $1222$, $3322$ and permutations is a Moser set with statistics $(6,12,18,4,0)$.  Conversely, every Moser set with statistics $(6,12,18,4,0)$ is of this form up to the symmetries of the cube $[3]^4$.
\end{lemma}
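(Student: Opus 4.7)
The plan is to determine $A$ sphere by sphere from $S_{4,4}$ inward, using the Pareto-optimality of the statistic $(6,12,18,4,0)$ to ensure that every missing $c$- or $d$-point is blocked by some geometric line through two points of $A$.

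Let $P := A \cap S_{4,4}$, so $|P| = 6$. For any pair $u,v \in P$ at Hamming distance $k \in \{1,2,3,4\}$, the midpoint $(u+v)/2$ lies in $S_{4-k,4}$ and must be absent from $A$. Distance-$2$ pairs have pairwise distinct midpoints, so their number is at most $24 - c(A) = 6$; distance-$3$ midpoints must lie in the $8 - d(A) = 4$ missing $d$-points. Combined with $\binom{6}{2} = 15$ and the requirement that the six missing $c$-points and four missing $d$-points be exactly accounted for, this yields strong restrictions on the distance profile of $P$.

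I then enumerate, up to the geometric symmetry group $\mathbb{Z}_2^4 \rtimes S_4$ of the $4$-cube, the $6$-subsets of $\{1,3\}^4$ compatible with these restrictions. The canonical orbit is the ``star plus antipode'' $P_\star := \{v\} \cup N(v) \cup \{-v\}$ (with $N(v)$ the Hamming-neighborhood of $v$); the case analysis also produces several other candidate orbits, each of which I rule out by exhibiting a concrete geometric line with three points forced into $A$. For instance, $P_\dagger := \{1111,1133,1313,1331,1311,3133\}$ produces forced missing $c$-points $\{1122, 1212, 1221, 1223, 1232, 1322\}$ and missing $d$-points $\{1222, 2122, 2223, 2232\}$, whence $2211, 2213 \in A \cap S_{2,4}$ and $2212 \in A \cap S_{1,4}$, yielding the violation $\{2211,2212,2213\} \subset A$; and $P_\ddagger := \{3111,1311,1131,1113,3311,1133\}$ produces the violation $\{1223, 2223, 3223\} \subset A$. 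Hence $P = P_\star$, and after applying a cube symmetry I may take $v = 1111$, giving $P = \{1111,1113,1131,1311,3111,3333\}$.

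With $P$ fixed, the six distance-$2$ midpoints are precisely the six shape-$(2,2,0)$ strings, pinning down $A \cap S_{2,4}$ as the 18 strings of shapes $(1,2,1)$ and $(0,2,2)$; the four distance-$3$ midpoints are $\{3222,2322,2232,2223\}$, pinning down $A \cap S_{1,4}$ as the four shape-$(1,3,0)$ strings. The four distance-$1$ pairs exclude the shape-$(3,1,0)$ $b$-points. For each remaining shape-$(0,1,3)$ or $(2,1,1)$ $b$-point $w$, I exhibit a geometric line with the other two points already in $A$ (e.g.\ $2333 \in \{2333,2233,2133\}$ with $2233, 2133 \in A \cap S_{2,4}$; $1132 \in \{1132,1232,1332\}$ with $1232 \in A \cap S_{2,4}$ and $1332 \in A \cap S_{3,4}$), forcing $w \notin A$. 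The surviving twelve shape-$(1,1,2)$ strings comprise $A \cap S_{3,4}$.

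The main obstacle is the enumeration of candidate orbits for $P$ and the uniform verification that every non-star orbit produces a Moser-violating geometric line; this is a finite but somewhat intricate case analysis, since distinct degree sequences of the distance-$1$ subgraph of $P$ (e.g.\ $(4,1,1,1,1,0)$ for $P_\star$, $(3,1,1,1,1,1)$ for $P_\dagger$, $(2,2,1,1,1,1)$ for $P_\ddagger$) produce genuinely different orbits to handle. Once $P_\star$ is isolated, the remaining sphere-by-sphere deductions are essentially mechanical.
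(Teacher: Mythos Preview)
Your route (start from $P=A\cap S_{4,4}$ and work inward) is different from the paper's, which fixes the four $d$-points first (there are only eight points in $S_{1,4}$, so the symmetry reduction is much lighter) and then determines $c$-, $a$-, $b$-points in that order; the paper treats this as a computer verification and only sketches the search. Your approach is viable in principle, but the sketch has two concrete gaps.

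First, the claim that distinct distance-$2$ pairs in $P\subset\{1,3\}^4$ have distinct midpoints is false: for example $\{1111,1133\}$ and $\{1113,1131\}$ both have midpoint $1122$. In general two such pairs share a midpoint exactly when the four points form a $2$-face of the $4$-cube, so your bound of $6$ on the number of distance-$2$ pairs is not justified without first excluding $2$-faces from $P$. This does not necessarily kill the enumeration, but it widens the case analysis beyond what you indicate.

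Second, your determination of $A\cap S_{3,4}$ is circular. In the line $\{2333,2233,2133\}$ the point $2133$ has only one $2$, so $2133\in S_{3,4}$, not $S_{2,4}$; you cannot yet assert $2133\in A$. Likewise, excluding $1132$ via the line $\{1132,1232,1332\}$ uses $1332\in A\cap S_{3,4}$, which is precisely what you are trying to establish. In fact, once $P$, $A\cap S_{2,4}$ and $A\cap S_{1,4}$ are pinned down, the only $b$-points excluded \emph{directly} as midpoints of already-known pairs are the four shape-$(3,1,0)$ strings; the remaining $28$ candidates must be cut to $12$ using the constraint that no two $b$-points may complete a line with a determined $c$- or $d$-point, together with $b(A)=12$. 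That step is a genuine (if finite) computation---the paper calls it ``a rather longer computation''---and your sketch does not supply it.
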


\begin{proof}  This can be verified by computer.  By symmetry, one assumes $1222$,$2122$,$2212$ and $2221$ are in the set.  Then $18$ of the $24$ `$c$' points with two $2$s must be included; it is quick to check that $1122$ and permutations must be the six excluded.  Next, one checks that the only possible set of six `$a$' points with no $2$s is $1111$, $1113$, $3333$ and permutations.  Lastly, in a rather longer computation, one finds there is only possible set of twelve `$b$' points, that is points with one $2$.  A computer-free proof can be found at the page {\tt Classification of $(6,12,18,4,0)$ sets} at \cite{polywiki}.
\end{proof}

As a consequence of this lemma, given any $x,y,z,w \in \{1,3\}$, there is a unique good Moser set in $[3]^4$ set whose intersection with $S_{1,4}$ is $\{x222, 2y22, 22z2, 222w\}$, and these are the only $16$ possibilities. Call this set the \emph{good set of type $xyzw$}. It consists of
\begin{itemize} 
\item The four points $x222, 2y22, 22z2, 222w$ in $S_{1,4}$;
\item All $24$ elements of $S_{2,4}$ except for $xy22, x2z2, x22w, 2yz2, 2y2w, 22zw$;
\item The twelve points $xYZ2$, $xY2W$, $x2ZW$, $XyZ2$, $Xy2W$, $2yZW$, $XYz2$, $X2zW$, $2YzW$, $XY2w$, $X2Zw$, $2YZw$ in $S_{3,4}$, where $X=4-x$, $Y=4-y$, $Z=4-z$, $W=4-w$;
\item The six points $xyzw, xyzW, xyZw, xYzw, Xyzw, XYZW$ in $S_{4,4}$.
\end{itemize} 

We can use this to constrain the types of two intersecting good slices:

\begin{lemma}\label{pqs} Suppose that the $pq****$ slice is of type $xyzw$, and the $p*r***$ slice is of type $x'y'z'w'$, where $p,q,r,x,y,z,w,x',y',z',w'$ are in $\{1,3\}$. Then $x'=x$ iff $q=r$, and $y'z'w'$ is equal to either $yzw$ or $YZW$. If $x=r$ (or equivalently if $x'=q$), then $y'z'w'=yzw$.
\end{lemma}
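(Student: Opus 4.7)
The plan is to exploit that, by Lemma~\ref{sixt}, each of the good slices $pq****$ and $p*r***$ is determined up to its type, while the two slices intersect in the three-dimensional subcube $pqr***$. The restriction $A\cap pqr***$ therefore admits two descriptions, one coming from each slice, and insisting that they coincide will force the claimed relations between $(x,y,z,w)$, $(x',y',z',w')$ and $(p,q,r)$. Keeping the bookkeeping straight is the one genuinely subtle point: in the identification $[3]^4\to pq****$ the ``first coordinate'' of $[3]^4$ corresponds to position $3$ of $[3]^6$, whereas in $[3]^4\to p*r***$ it corresponds to position $2$; in both cases fixing that first coordinate is what lands us inside $pqr***$.

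Granting this, I would first use the explicit list given after Lemma~\ref{sixt} to read off the two possible ``first-coordinate'' three-dimensional slices of $G(xyzw)$. Fixing the first coordinate to $r=x$ gives the $11$-element set
$$\{222,\;Y22,\;2Z2,\;22W,\;YZ2,\;Y2W,\;2ZW,\;yzw,\;yzW,\;yZw,\;Yzw\},$$
while fixing it to $r=X:=4-x$ yields instead a $14$-element set whose $S_{1,3}$-layer is all of $S_{1,3}$ and whose $S_{3,3}$-layer is the antipodal pair $\{yzw,YZW\}$. Applying the same computation to $G(x'y'z'w')$ at first coordinate $q$, and equating the two descriptions of $A\cap pqr***$, the cardinalities $11$ versus $14$ must match, which forces $r=x \iff q=x'$. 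Since $r,q\in\{1,3\}$, this equivalence is the same as $x=x' \iff q=r$, proving the first assertion and the ``equivalently'' parenthetical.

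A short case analysis then finishes matters. In the case $r=x$, $q=x'$, the $S_{3,3}$-part of the common slice is $\{yzw,yzW,yZw,Yzw\}$, that is, the closed Hamming ball of radius $1$ about $yzw$ in $\{1,3\}^3$; such a ball determines its centre uniquely, so matching with the primed version forces $y'z'w'=yzw$, which is the third assertion. In the case $r=X$, $q=X'$, the $S_{3,3}$-part is the antipodal pair $\{yzw,YZW\}$, so $\{y'z'w',Y'Z'W'\}=\{yzw,YZW\}$ and hence $y'z'w'\in\{yzw,YZW\}$; both options are actually realised, as is easily checked by substituting $(y,z,w)\mapsto(Y,Z,W)$ in the $S_{2,3}$-layer $\{yZ2,y2W,Yz2,2zW,Y2w,2Zw\}$ and observing that the set is invariant. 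In both cases we obtain $y'z'w'\in\{yzw,YZW\}$, giving the second assertion. The main obstacle, as noted, is not any individual computation but the careful identification of the two copies of $[3]^4$ inside $[3]^6$; once that is set up everything reduces to the concrete list recorded after Lemma~\ref{sixt}.
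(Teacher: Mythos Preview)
Your argument is correct and follows the same underlying idea as the paper: both proofs compare the common three-dimensional cube $pqr***$ as seen from each of the two good four-dimensional slices, using the explicit description of good sets from Lemma~\ref{sixt}. The paper first normalises $p=q=r=1$ by reflection symmetry and then reads off the constraints from individual points (the single point $111222$ for the first claim, the three $S_{1,3}$ points $111y22,1112z2,11122w$ when $x=1$, and the antipodal pair in $S_{3,3}$ when $x=3$); you keep $p,q,r$ general and instead use the cardinality dichotomy $11$ versus $14$ for the first claim and the $S_{3,3}$ layer in both cases. These are minor tactical differences, and your bookkeeping of which $[3]^4$-coordinate corresponds to which $[3]^6$-position is handled correctly.
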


\begin{proof} By reflection symmetry we can take $p=q=r=1$. Observe that the $11****$ slice contains $111222$ iff $x=1$, and the $1*1***$ slice similarly contains $111222$ iff $x'=1$. This shows that $x=x'$.

Suppose now that $x=x'=1$. Then the $111***$ slice contains the three elements $111y22, 1112z2, 11122w$, and excludes $111Y22, 1112Z2, 11122W$, and similarly with the primes, which forces $yzw=y'z'w'$ as claimed.

Now suppose that $x=x'=3$. Then the $111***$ slice contains the two elements $111yzw, 111YZW$, but does not contain any of the other six points in $S_{6,6} \cap 111***$, and similarly for the primes. Thus $y'z'w'$ is equal to either $yzw$ or $YZW$ as claimed. 
\end{proof}

Now we look at two adjacent parallel good slices, such as $11****$ and $13****$.  The following lemma asserts that such slices either have opposite type, or else will create a huge amount of defect in other slices:

\begin{lemma}\label{l18} Suppose that the $11****$ and $13****$ slices are good with types $xyzw$ and $x'y'z'w'$ respectively. If $x=x'$, then the $1*x***$ slice has defect at least $30$, and the $1*X***$ slice has defect at least $8$. Also, the $1**1**$, $1**3**$, $1***1*$, $1***3*$, $1****1$, $1****3$ slices have defect at least $6$. In particular, the total defect of slices beginning with $1*$ is at least $74$.
\end{lemma}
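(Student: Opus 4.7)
The plan is to translate the good-set structure of the two parallel slices $11****$ and $13****$ into explicit point-sets on each of the seven transversal slices listed, and then bound the statistics via Lemma~\ref{paretop-4} (in the form of Lemma~\ref{defects}). The key combinatorial tool in every case is that a triple $\{(1,\vec u),(2,\vec u),(3,\vec u)\}$ (varying one wildcard coordinate) is a combinatorial line, so whenever both outer members belong to $A$, the middle point is forbidden.

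First, using the explicit description of a good Moser set in Lemma~\ref{sixt}, I would compute two key 3-dimensional restrictions. Restricting the good set $G$ of type $xyzw$ to the hyperplane $w_1=x$ gives an $11$-point subset of $[3]^3$ with Pareto-optimal statistics $(4,3,3,1)$ (in the notation of Lemma~\ref{paretop}), while restricting to $w_1=X$ gives a $14$-point subset with statistics $(2,6,6,0)$. Since $A\cap 13****$ has the same first-coordinate type $x$, the corresponding restrictions of $G'$ have the same statistics.

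For the slice $V=1*x***$, parameterise by $(u,c,d,e)$ with $u$ running over position $2$ and $(c,d,e)$ over positions $4,5,6$. The $u=1$ layer is $A\cap 11x***$ (11 points, stats $(4,3,3,1)$) and $u=3$ is $A\cap 13x***$ (same stats); together they contribute $(a,b,c,d,e)=(8,6,6,2,0)$ to the 4-dim statistics of $V$, and force $a=8$, $e=0$ and $d\geq 2$ with the remaining freedom confined to the middle layer $A''=A\cap 12x***$. For every $(c,d,e)$ occurring in both outer layers the combinatorial line through $(2,c,d,e)$ rules that point out of $A''$; I would enumerate these exclusions (sub-casing on whether $y'z'w'=yzw$, which by Lemma~\ref{pqs} is the dichotomy permitted), add the constraint that $A''$ is itself a Moser set, and then invoke Lemma~\ref{paretop-4} to show the statistics of $V$ are dominated by a Pareto-optimal vector satisfying $4a+6b+10c+20d+60e\leq 326$, i.e., $D(V)\geq 30$. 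The analogous argument for $V'=1*X***$ uses the 14-point restrictions, contributing $(4,12,12,0,0)$ before exclusions, and Lemma~\ref{paretop-4} then yields $D(V')\geq 8$.

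For each of the six remaining slices (with one of positions $4,5,6$ fixed to $1$ or $3$), a shorter enumeration identifies the corresponding restrictions of $G$ and $G'$ to that hyperplane and the forced exclusions they impose on the $u=2$ middle layer; this pins the 4-dim statistics into a region where Lemma~\ref{paretop-4} guarantees defect at least $6$. Summing the seven bounds gives total defect $\geq 30+8+6\cdot 6=74$. The main obstacle is the sharp $\geq 30$ bound for $V$: the crude cardinality bound on $A''$ (the 16-point complement of the forced exclusions) overshoots the target, so the argument has to route through the Pareto-optimal list of Lemma~\ref{paretop-4}, and the case analysis depending on whether $y'z'w'=yzw$ must be handled carefully since the two cases give genuinely different exclusion patterns in $A''$.
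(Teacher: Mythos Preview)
Your approach is valid in outline, but it is working much harder than necessary, and the paper's route is far shorter.

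For the two slices $1*x***$ and $1*X***$ you have already done the only computation that matters: your outer-layer tally gives the $a$-statistic exactly (the middle layer $u=2$ contributes nothing to $a$, since its second coordinate is $2$).  Thus $a(1*x***)=8$ and $a(1*X***)=4$ follow immediately from your own $(8,6,6,2,0)$ and $(4,12,12,0,0)$ figures.  But now Lemma~\ref{defects} already asserts that $a\geq 8$ forces $D\geq 30$ and $a=4$ forces $D\geq 8$.  There is no need to enumerate middle-layer exclusions, sub-case on whether $y'z'w'=yzw$ or $YZW$, or invoke the full Pareto table of Lemma~\ref{paretop-4}; the ``main obstacle'' you anticipate for the sharp $\geq 30$ bound is a phantom.

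For the remaining six slices the paper again avoids any middle-layer bookkeeping.  One reads off the $a$- and $d$-statistics from the outer layers, uses Lemma~\ref{pqs} (in permuted form) to see that none of these slices can be good, and then checks that the $a$- and $d$-values rule out the three defect-$4$ configurations $(5,12,18,4,0)$, $(5,12,12,4,1)$, $(6,8,12,8,0)$ listed in Lemma~\ref{defects}.  That forces each defect to be at least~$6$.  (When $y'=y$, the $a$-value of a pair such as $1**y**,1**Y**$ is $8,4$ rather than $6,6$, but the resulting defect bound $30+8$ is only stronger.)

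In short: both arguments are correct, but the paper's exploits the pre-packaged thresholds in Lemma~\ref{defects} and never touches the middle layer, whereas your plan re-derives those thresholds from scratch via Lemma~\ref{paretop-4}.
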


\begin{proof} Observe from the $11****, 13****$ hypotheses that $a(1*x***)=9$ and $a(1*X***)=4$, which gives the first two claims by Lemma \ref{defects}.  For the other claims, one sees from Lemma \ref{pqs} that the other six slices cannot be good; also, they have an $a$-value of $6$ and a $d$-value of at most $7$, and the claims then follow from Lemma \ref{defects}.
\end{proof}

Now we look at two diagonally opposite parallel good slices, such as $11****$ and $33****$. 

\begin{lemma}\label{l14} The $11****$ and $33****$ slices cannot both be good and of the same type.
\end{lemma}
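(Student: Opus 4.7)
The plan is to argue by contradiction: suppose both $11{****}$ and $33{****}$ are good of the common type $xyzw$, and let $G \subset [3]^4$ denote the good set of Lemma \ref{sixt} associated to this type. I will combine three ingredients — line exclusions in the centre slice, five-dimensional Moser bounds on the containing slices, and defect counting on perpendicular corner slices — to derive a contradiction with $|A|=354$.

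First I would show $|A \cap 22{****}| \le 26$ by forcing 6D geometric lines through the centre slice. For every pair of distinct $P, P' \in G$ whose coordinatewise sum is even, let $w$ be the 6D word labelling positions $1$ and $2$ by the wildcard $x$, labelling positions where $P$ and $P'$ agree by their common value, and labelling positions where they differ by $2$ with $x$ (when $P$ has the smaller value there) or $\overline{x}$ (when $P$ has the larger). Then $w$ defines a geometric line whose endpoints are $11P$ and $33P'$, both in $A$ by hypothesis, so its midpoint $22M$ with $M=(P+P')/2$ must lie outside $A$. Enumerating such midpoints against the explicit description of $G$ in Lemma \ref{sixt}, every point of $S_{0,4} \cup S_{1,4} \cup S_{2,4}$ turns out to be excluded, as do all $12$ points of $G \cap S_{3,4}$ and the four ``axis midpoints'' $\{2\beta\gamma\delta,\,\alpha 2\gamma\delta,\,\alpha\beta 2\delta,\,\alpha\beta\gamma 2\}$ (where $\alpha\beta\gamma\delta=xyzw$). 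The remaining $16$ points of $S_{3,4}$ and $10$ points of $S_{4,4}$ can be checked to be pairwise non-collinear in $[3]^4$, so this is the tight bound obtainable from this construction.

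Second, I apply $c'_{5,3} \le 124$ to the four 5D slices $1{*****},\, 3{*****},\, *1{****},\, *3{****}$, each of which contains a 40-point good slice; this yields $|A \cap 12{****}|+|A \cap 13{****}| \le 84$ and three symmetric inequalities. Combined with the per-slice bound $|A \cap ab{****}| \le 43$ from Lemma \ref{paretop-4} and the centre-slice bound above, these constraints partially pin down the distribution of $A$ across the nine 4D corner slices.

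The main obstacle is the third step: a straight LP on the bounds above is just barely insufficient, permitting feasible distributions up to $|A| \approx 360$. I would close the gap by a defect-counting argument in the style of Lemma \ref{l18}. The exclusions in $22{****}$ and the structural rigidity of a good slice (Lemma \ref{sixt}), transported via Lemma \ref{pqs} to the corner slices perpendicular to the $11{****}$–$33{****}$ axis (those of the form $*1{****},\, *3{****},\, 1{*}1{***}$, etc.), force at least one such perpendicular corner slice to have $a$-statistic $\ge 7$, which by Lemma \ref{defects} contributes defect at least $16$. Adding this to the defect contributions already accounted for on the remaining $59$ corner slices exceeds the global budget $60(2+f(A))=120$ established from Lemma \ref{f6}, producing the contradiction. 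The delicate part is the precise identification of which perpendicular slice is forced into the high-$a$ regime, which depends on the type $xyzw$ and likely requires a short case analysis over the sixteen possible types (reduced to just a few cases up to the geometric symmetry group).
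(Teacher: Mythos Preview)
Your first two steps match the paper's: exclude midpoints in $22{****}$ to leave the same $26$ candidate points, and bound the five-dimensional slices by $c'_{5,3}=124$. But at this point you and the paper diverge.

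The paper does \emph{not} attempt a defect argument. Instead it sharpens the centre-slice bound from $26$ down to $16$ by a further double-counting of the twelve lines obtained from $22133{*}$ by permuting the last four indices, concluding that at most $12$ of the $22$ points of the three types $221133,\,221333,\,221132$ can lie in $A$; together with the $4$ points of type $223332$ this gives $|A\cap 22{****}|\le 16$. After that the elementary decomposition
\[
|A|\ \le\ |1{*****}|+|21{****}|+|22{****}|+|23{****}|+|3{*****}|\ \le\ 124+43+16+43+124\ =\ 350\ <\ 354
\]
finishes the proof immediately --- no LP, no defect accounting, no case analysis over types.

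Your Step~3 is where the genuine gap lies. Nothing you have written establishes that some perpendicular corner slice must have $a$-statistic $\ge 7$: Lemma~\ref{pqs} only relates two \emph{good} slices, and you have no control over whether the perpendicular corner slices are good. Even granting one corner slice of defect $\ge 16$, that alone comes nowhere near exceeding the total budget $120$; you would need substantial lower bounds on the defects of many other corner slices, and the results that supply such bounds (Lemma~\ref{l19}, Corollary~\ref{l20}, Corollary~\ref{coda}) are all proved \emph{after} Lemma~\ref{l14} and depend on it. So the route you sketch is, as it stands, both unjustified and likely circular. The fix is not to patch the defect argument but to go back and squeeze the centre slice harder, as the paper does.
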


\begin{proof} Suppose not.  By symmetry we may assume that $11****$ and $33****$ are of type $1111$. This excludes a lot of points from $22****$. Indeed, by connecting lines between the $11****$ and $33****$ slices, we see that the only points that can still survive in $22****$ are $221133, 221333, 221132, 223332$, and permutations of the last four indices. Double counting the lines $22133*$ and permutations we see that there are at most $12$ points one can place in the permutations of $221133, 221333, 221132$, and so the $22****$ slice has at most $16$ points. Meanwhile, the two five-dimensional slices $1*****, 3*****$ have at most $c'_{5,3} = 124$ points, and the other two four-dimensional slices $21****, 23****$ have at most $c'_{4,3} = 43$ points, leading to at most $16 + 124 * 2 + 43 * 2 = 350$ points in all, a contradiction.
\end{proof}

\begin{lemma}\label{l19} It is not possible for all four slices in a family to be good.
\end{lemma}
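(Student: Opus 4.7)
The plan is to assume for contradiction that all four slices in the family $\{11****, 13****, 31****, 33****\}$ are good, and to use Lemma \ref{sixt} to assign each a type $T_{11}, T_{13}, T_{31}, T_{33} \in \{1,3\}^4$. Applying Lemma \ref{l14} to the diagonal pair $(11****, 33****)$, and applying it via the reflection symmetry $x_2 \mapsto 4 - x_2$ (which swaps the two diagonal pairs while preserving the $4$D structure of each slice, and hence its type) to $(13****, 31****)$, one obtains $T_{11} \neq T_{33}$ and $T_{13} \neq T_{31}$.

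Next I would exploit Lemma \ref{l18} together with the total-defect budget of $120$ (which holds since $f(A)=0$) to force at least one ``row'' pair to be \emph{antipodal}, that is, to differ in all four type positions, and likewise for at least one ``column'' pair. Lemma \ref{l18} says that if the good parallel pair $(11****, 13****)$ has types agreeing in their first type position, then the $10$ corner slices fixing coord $1 = 1$ accumulate defect at least $74$; by permuting the four wildcard coordinates, the same bound holds when the types agree in \emph{any} type position, and by reflecting coord $1$ one obtains the analogous bound for the other row pair $(31****, 33****)$ with defect concentrated in the $10$ corner slices fixing coord $1 = 3$. These two groups of slices are disjoint, so if both row pairs had any type-position agreement, the total defect of corner slices would be at least $148$, exceeding the global budget of $120$. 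Hence at least one of the row pairs $(T_{11},T_{13})$ or $(T_{31},T_{33})$ is antipodal. The same argument with coords $1$ and $2$ interchanged gives at least one antipodal column pair, $(T_{11},T_{31})$ or $(T_{13},T_{33})$.

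The proof concludes with a short case analysis over the four combinations of one antipodal row pair with one antipodal column pair. For instance, if $T_{13} = \overline{T_{11}}$ (row) and $T_{31} = \overline{T_{11}}$ (column), then $T_{13} = T_{31}$, contradicting the first step; the other three combinations similarly force either $T_{13} = T_{31}$ or $T_{11} = T_{33}$, each conflicting with Lemma \ref{l14}. The main technical obstacle is the careful justification of the ``by symmetry'' extensions of Lemmas \ref{l14} and \ref{l18}: one must verify that the coordinate permutations and reflections of $[3]^6$ that one invokes really preserve goodness of the slices, translate the agreement hypothesis on type positions into an equivalent hypothesis in the original setting, and map the relevant defect bounds into the intended $10$-slice groups so that the two disjoint defect totals can be added.
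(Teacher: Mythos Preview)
Your proposal is correct and relies on the same ingredients as the paper (Lemma~\ref{l14}, Lemma~\ref{l18}, and the total-defect budget of $120$), but the organization differs. You first force one row pair and one column pair to be antipodal, then derive a type-equality contradiction via a four-way case split, which requires the extra symmetric application of Lemma~\ref{l14} to the pair $(13{****},31{****})$. The paper's route is more direct: since $T_{11}\neq T_{33}$, the slice $13{****}$ fails to be antipodal to at least one of $11{****}$, $33{****}$, so by (a permutation of) Lemma~\ref{l18} one of the groups ``first coordinate $=1$'' or ``second coordinate $=3$'' carries defect $\geq 74$; the same reasoning applied to $31{****}$ shows that one of ``first coordinate $=3$'' or ``second coordinate $=1$'' carries defect $\geq 74$. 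Any choice of one group from each pair overlaps only in a good slice (defect $0$), so the total defect is $\geq 148>120$ immediately. This bypasses both the second use of Lemma~\ref{l14} and your final case analysis. Your version works just as well; it simply does a little more bookkeeping.
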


\begin{proof} Suppose not.  By symmetry we may assume that $11****, 13****, 31****, 33****$ are good.   By Lemma \ref{l14}, the $11****$ and $33****$ slices cannot be of the same type, and so they cannot both be of the opposite type to either $13****$ or $31****$. If $13****$ is not of the opposite type to $11****$, then by (a permutation of) Lemma \ref{l18}, the total defect of slices beginning with $1*$ is at least $74$; otherwise, if $13****$ is not of the opposite type to $33****$, then by (a permutation and reflection of) Lemma \ref{l18}, the total defect of slices beginning with $*3$ is at least $74$. Similarly, the total defect of slices beginning with $3*$ or $*1$ is at least $74$, leading to a total defect of at least $148$. But the total defect of all the corner slices is $2 \times 60 = 120$, a contradiction.
\end{proof}

\begin{corollary}\label{l20} At most one family can have a total defect of at least $38$.
\end{corollary}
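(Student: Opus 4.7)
The plan is to give a very short counting proof using Lemma~\ref{l19} together with the defect bookkeeping we have just set up. Recall from the remark immediately after Lemma~\ref{f6} that, because $f(A)=0$, the average defect over the $60$ corner slices equals $2+f(A)=2$, so the total defect summed over all corner slices is exactly $120$. Partitioning these $60$ slices into the $15$ families of $4$ parallel corner slices each, the average defect per family is $8$.

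The key observation is that no family can be totally defect-free. Indeed, Lemma~\ref{l19} forbids all four slices in a family from being good, so every family contains at least one bad slice, and by Lemma~\ref{defects} every bad slice contributes defect at least $4$. Hence every family has total defect at least $4$.

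Now I would argue by contradiction: suppose two distinct families each have total defect at least $38$. Since the $15$ families partition the $60$ corner slices, the contributions from different families are disjoint, so these two families together contribute at least $76$ to the total defect, while each of the remaining $13$ families contributes at least $4$, giving
\[
\text{total defect} \;\geq\; 2\cdot 38 + 13 \cdot 4 \;=\; 76 + 52 \;=\; 128.
\]
This contradicts the exact total of $120$ established above, which completes the proof.

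There is no real obstacle here: the whole step is just combining the exact defect total $120$ with the universal lower bound $\geq 4$ per family coming from Lemmas~\ref{l19} and~\ref{defects}. The only point one has to be careful about is that families are disjoint (each corner slice lies in exactly one family, determined by the positions of its two fixed coordinates), so the defect contributions add without overlap.
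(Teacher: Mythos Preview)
Your proof is correct and essentially identical to the paper's own argument: assume two families have defect at least $38$, use Lemmas~\ref{l19} and~\ref{defects} to give every other family defect at least $4$, and obtain $2\cdot 38 + 13\cdot 4 = 128 > 120$. Your additional remark about the families partitioning the corner slices (so defects add without overlap) is a helpful clarification the paper leaves implicit.
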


\begin{proof} Suppose there are two families with defect at least $38$. The remaining thirteen families have defect at least $4$ by Lemma \ref{l19} and Lemma \ref{defects}, leading to a total defect of at least $38*2+13*4=128$. But the total defect is $2 \times 60 = 120$, a contradiction.
\end{proof}

Actually we can refine this:

\begin{proposition}  No family can have a total defect of at least $38$.
\end{proposition}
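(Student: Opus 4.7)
The plan is to force some family other than $F$ to also have defect at least $38$, contradicting Corollary \ref{l20}. The lever is Lemma \ref{l18}: whenever two parallel adjacent good slices in $F$ have types that agree in at least one of their four coordinates (equivalently, types that are not antipodes of each other in $\{1,3\}^4$), the symmetric form of the lemma applied to a matching coordinate places defects at least $30$ and $8$ on two slices lying in a single common family, and defects at least $6$ on six further slices lying in three more families. In particular, a second family acquires defect at least $30+8=38$, which contradicts Corollary \ref{l20}. Thus I may assume that every adjacent good pair in $F$ has antipodal types.

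Next, I would rule out the case in which $F$ contains three good slices. By symmetry take $11{*}{*}{*}{*}$, $13{*}{*}{*}{*}$, $31{*}{*}{*}{*}$ good with types $\tau_1,\tau_2,\tau_3$; the antipodal assumption applied to the two adjacent pairs $(11,13)$ and $(11,31)$ forces $\tau_2=\bar{\tau_1}$ and $\tau_3=\bar{\tau_1}$, so $\tau_2=\tau_3$. But $(13{*}{*}{*}{*},31{*}{*}{*}{*})$ is then a diagonal pair of good slices of equal type, contradicting the symmetric version of Lemma \ref{l14}. Hence $g_F \le 2$.

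The remaining cases ($g_F=2$ adjacent with antipodal types, $g_F=2$ diagonal, and $g_F \le 1$) all lack an adjacent good pair that triggers Lemma \ref{l18}. In each, I would combine the defect lower bounds of Lemma \ref{defects} with the slice-sum identities in Lemma \ref{dci} and the vanishing $e(A)=f(A)=g(A)=0$ (Lemmas \ref{g6}, \ref{f6}), together with the hyperplane bound $|A\cap\{x_i=s\}|\le c'_{5,3}=124$ for $s\in\{1,3\}$, to derive a contradiction with $|A|=354$. The idea is that high defect in a slice forces an unusually large $a$-value (Lemma \ref{defects}), which forces $a(A)$ to be large; pushed through the identities that relate $b(A), c(A), d(A)$ to slice statistics, this either inflates $|A|$ past $354$ or breaks a Moser-set inequality such as \eqref{alpha-1} or \eqref{six}, or pushes one of the five-dimensional hyperplanes past $c'_{5,3}$.

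The main obstacle is this last block of cases: Lemma \ref{paretop-4} has a very long Pareto frontier, so the case analysis is combinatorially heavy. I anticipate that enumerating the feasible profiles for the two or three bad slices of $F$ and ruling each out in turn is the most laborious part of the argument, and may benefit from an integer-programming treatment in the style of Proposition \ref{stat}, rather than a purely hand-checked case split.
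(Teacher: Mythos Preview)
Your Steps 1 and 2 are correct: if two adjacent good slices of $F$ have non-antipodal types, then the symmetric form of Lemma \ref{l18} produces two corner slices of combined defect $\geq 30+8=38$ in a single \emph{other} family, contradicting Corollary \ref{l20}; and this then rules out $g_F\geq 3$ via Lemma \ref{l14}. So far, so good.

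The gap is Step 3. Your sketch there does not work, and the specific mechanism you propose is backwards. Lemma \ref{defects} says large $a(V)$ forces large $D(V)$, not the converse; a slice with tiny $a$ can have enormous defect (e.g.\ the empty slice has $D=356$). So ``high defect in a slice forces an unusually large $a$-value'' is false, and nothing in Lemmas \ref{dci}, \ref{g6}, \ref{f6} or the bound $|A\cap\{x_i=s\}|\le 124$ prevents one family from soaking up defect $\geq 38$ while having only $0$, $1$, or $2$ good slices. There is no local obstruction inside $F$ to high defect; on the contrary, few good slices and high defect go together. So the case split you anticipate being ``combinatorially heavy'' is in fact not completable along the lines you describe.

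The paper's argument is entirely different and does not look inside $F$ at all. It exploits the \emph{global} budget: the sixty corner slices have total defect exactly $120$. Corollary \ref{l20} forces every family other than $F$ to have defect $<38$. One then applies Lemma \ref{l18} and Lemma \ref{l14} to \emph{those} families (not to $F$) to show that the six families disjoint from the two fixed coordinates of $F$ each have at most two good slices, while the remaining eight families have at most three (Lemma \ref{l19}). This yields at least $20$ bad slices outside $F$, each of defect $\geq 4$, so $\geq 80$ of the $120-38=82$ remaining budget is consumed; hence all but at most one bad slice outside $F$ has defect exactly $4$. That rigidity then forbids adjacent good pairs of non-opposite type in a further family (since Lemma \ref{l18} would create several slices of defect $\geq 6$ outside $F$), raising the bad-slice count to at least $21$ and overrunning the budget. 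The key idea you are missing is this global double-count against the fixed total $120$, applied to the \emph{other} families rather than to $F$.
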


\begin{proof} Suppose for contradiction that the $ab****$ family (say) had a total defect of at least $38$, then by Corollary \ref{l20} no other families have total defect at least $38$.

We claim that the $**ab**$ family can have at most two good slices. Indeed, suppose the $**ab**$ has three good slices, say $**11**, **13**, **33**$. By Lemma \ref{l14}, the $**11**$ and $**33**$ slices cannot be of the same type, and so cannot both be of opposite type to $**13**$. Suppose $**11**$ and $**13**$ are not of opposite type. Then by (a permutation of) Lemma \ref{l18}, one of the families $a*b***, *ab***, **b*a*, **b**a$ has a net defect of at least $38$, contradicting the normalisation.

Thus each of the six families $**ab**, **a*b*, **a**b, ***ab*, ***a*b, ****ab$ have at least two bad slices. Meanwhile, the eight families $a*b***, a**b**, a***b*, a****b, *ab***, *a*b**, *a**b*, *a***b$ have at least one bad slice by Corollary \ref{l19}, leading to twenty bad slices in addition to the defect of at least $38$ arising from the $ab****$ slice. To add up to a total defect of $120$, we conclude from Lemma \ref{defects} that all bad slices outside of the $ab****$ family have a defect of four, with at most one exception; but then by Lemma \ref{l18} this shows that (for instance) the $1*1***$ and $1*3***$ slices cannot be good unless they are of opposite type. The previous argument then shows that the a*b*** slice cannot have three good slices, which increases the number of bad slices outside of $ab****$ to at least twenty-one, and now there is no way to add up to $120$, a contradiction. 
\end{proof}

\begin{corollary} Every family can have at most two good slices.
\end{corollary}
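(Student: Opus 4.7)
We argue by contradiction: suppose some family contains three good slices. By the combinatorial symmetries of $[3]^6$ that act on the fifteen families and, within a family, on the four corner slices, we may assume the family is $ab****$ and the three good slices are $11****$, $13****$, $33****$. Write $T_{11}, T_{13}, T_{33} \in \{1,3\}^4$ for their respective types, as classified by Lemma~\ref{sixt}; since all three slices share the same four free coordinates $3,4,5,6$, these types can be compared entrywise.

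The crux is to apply Lemma~\ref{l18} twice, once to each of the two adjacent pairs $(11****, 13****)$ and $(13****, 33****)$. For the first pair, Lemma~\ref{l18} (together with the obvious permutation of the four free coordinates in its statement) shows that if any of the four entries of $T_{11}$ agrees with the corresponding entry of $T_{13}$, then one of the four transverse families $a*b***$, $a**b**$, $a***b*$, $a****b$ acquires total defect at least $38$, contradicting the preceding Proposition. Hence $T_{13}$ is forced to be the entrywise complement (i.e., $1 \leftrightarrow 3$) of $T_{11}$. After applying the symmetry that swaps coordinates $1$ and $2$ (and, if needed, a reflection) to align the pair $(13****, 33****)$ with the configuration of Lemma~\ref{l18}, the identical argument forces $T_{33}$ to be the entrywise complement of $T_{13}$.

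Putting the two steps together gives $T_{33} = T_{11}$, so the diagonally opposite slices $11****$ and $33****$ are good of the same type, contradicting Lemma~\ref{l14}. The main obstacle is the bookkeeping needed to check that Lemma~\ref{l18} transfers correctly to each of the four entries of the type and to the second adjacent pair of slices via the cube symmetries; once this is set up, the argument reduces cleanly to a chain ``match $\Rightarrow$ defect $\geq 38\Rightarrow$ contradiction, so types are opposite,'' applied twice, followed by invocation of Lemma~\ref{l14}.
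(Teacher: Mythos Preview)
Your proof is correct and follows essentially the same approach as the paper: both arguments combine Lemma~\ref{l14} (diagonal good slices cannot share a type), Lemma~\ref{l18} (adjacent good slices whose types are not opposite force a family of defect at least $38$), and the preceding Proposition (no family has defect $\geq 38$). The paper applies Lemma~\ref{l14} first to conclude that one of the adjacent pairs is non-opposite and then invokes Lemma~\ref{l18} once, whereas you invoke Lemma~\ref{l18} twice to force $T_{11}=T_{33}$ and then apply Lemma~\ref{l14}; this is the same argument run in the contrapositive direction.
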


\begin{proof} If for instance $11****, 13****, 33****$ are all good, then by Lemma \ref{l14} at least one of $11****, 33****$ is not of the opposite type to $13****$, which by Lemma \ref{l18} implies that there is a family with a total defect of at least $38$, contradicting the previous proposition.
\end{proof}

From this corollary and Lemma \ref{defects}, we see that every family has a defect of at least $8$.  Since there are $15$ families, and $8 \times 15$ is exactly equal to $120$, we conclude

\begin{corollary}\label{coda} Every family has \emph{exactly} two good slices, and the remaining two slices have defect $4$.  In particular, by Lemma \ref{defects}, the bad slices must have statistics $(5,12,18,4,0)$, $(5,12,12,4,1)$, or $(6,8,12,8,0)$.  
\end{corollary}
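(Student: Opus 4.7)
The plan is to observe that the constraint is already tight, so equality holds throughout. First I would recall that, by Lemma~\ref{f6} and the discussion immediately following its proof, the average defect of the $60$ corner slices is $2+f(A) = 2$, so the total defect across all corner slices equals $2 \times 60 = 120$.

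Next I would lower bound the defect of each individual family. By the preceding corollary, every family contains at most two good slices, hence at least two bad slices. By Lemma~\ref{defects}, a bad slice has defect at least $4$. Therefore each of the $15$ families has total defect at least $2 \times 4 = 8$.

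Now the pigeonhole step finishes everything: since $15 \times 8 = 120$ matches the total defect exactly, every inequality in the previous step must be an equality. Consequently, each family has exactly two good slices, exactly two bad slices, and each bad slice contributes defect exactly $4$. Invoking the classification in Lemma~\ref{defects} one more time, the only Moser statistics giving defect exactly $4$ are $(5,12,18,4,0)$, $(5,12,12,4,1)$, and $(6,8,12,8,0)$, which yields the stated conclusion.

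There is really no obstacle here: the work has all been done in Lemmas~\ref{defects}, \ref{f6}, \ref{l19}, and the preceding corollary, and the present statement is just the tight case of a counting argument $15 \times 8 = 120$. The only thing to double-check is that no family can compensate for another by having defect strictly less than $8$, but this is ruled out by Lemma~\ref{defects} together with the ``at most two good slices'' corollary, as explained above.
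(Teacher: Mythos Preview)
Your argument is correct and matches the paper's own reasoning essentially verbatim: the paper notes just before stating Corollary~\ref{coda} that each family has defect at least $8$ (from the preceding corollary and Lemma~\ref{defects}), and that $15 \times 8 = 120$ equals the total defect, forcing equality everywhere. Your write-up simply spells this out in slightly more detail.
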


We now limit how these slices can interact with good slices.

\begin{lemma}\label{goodgood}  Suppose that $1*1***$ is a good slice.
\begin{itemize}
\item[(i)] The $11****$ slice cannot have statistics $(6,8,12,8,0)$.
\item[(ii)] The $11****$ slice cannot have statistics $(5,12,12,4,1)$.
\item[(iii)] If the $11****$ slice has statistics $(5,12,18,4,0)$, then the $112***$ slice has statistics $(3,9,3,0)$.
\end{itemize}
\end{lemma}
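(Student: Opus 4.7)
The key tool is Lemma \ref{sixt}, which pins down the structure of $A_1 := A \cap 111***$. Viewing $1*1***$ as a good 4-cube of type $(x,y,z,w) \in \{1,3\}^4$ (with wildcards identified with positions $p_2, p_4, p_5, p_6$), a direct calculation shows that restricting to the hyperplane $p_2 = 1$ yields a three-cube with statistics $(\alpha_1,\beta_1,\gamma_1,\delta_1) = (4,3,3,1)$ if $x=1$ and $(2,6,6,0)$ if $x=3$; in particular $111222 \in A$ iff $x = 1$, and the specific points in each sphere of $A_1$ are determined by $(x,y,z,w)$. Simultaneously, $11****$ decomposes in the $p_3$-direction into $A_1$, $A_2 := A \cap 112***$, and $A_3 := A \cap 113***$, whose three-cube statistics $(\alpha_i,\beta_i,\gamma_i,\delta_i)$ satisfy
\[
a' = \alpha_1+\alpha_3,\quad b' = \beta_1+\alpha_2+\beta_3,\quad c' = \gamma_1+\beta_2+\gamma_3,\quad d' = \delta_1+\gamma_2+\delta_3,\quad e' = \delta_2.
\]
Each $A_i$ must be a three-cube Moser set, so is componentwise bounded by some Pareto-optimal vector from Lemma \ref{paretop}. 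In addition, the combinatorial $p_3$-lines $11*PQR$ and the non-combinatorial geometric lines $11\,x\,P\,Q\,\bar{x}$ translate the presence of specific points in $A_1$ into forbidden points of $A_2$ or $A_3$.

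For part (i), $d'=8$ forces all of $S_{1,4}(11****) \subset A$, hence $111222 \in A$, so $x=1$ and $A_1 = (4,3,3,1)$. Solving the system gives $\alpha_3=2$, $\gamma_2+\delta_3=7$, and thus $\gamma_2=6, \delta_3=1$ and $\beta_2+\gamma_3=9$. The Pareto list yields $A_2 \leq (2,6,6,0)$ (the unique Pareto-optimal vector with $\gamma=6, \delta=0$), so $\beta_2 \leq 6$; and $A_3$ with $\alpha_3=2, \delta_3=1$ is dominated by $(3,6,3,1)$ or $(4,4,3,1)$, so $\gamma_3 \leq 3$. These force $\beta_2 = 6, \gamma_3 = 3$. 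The three specific points of $A_1 \cap S_{1,3}^{\text{slice}}$ pin $A_3 \cap S_{1,3}^{\text{slice}}$ to the three ``opposite'' points via the combinatorial $p_3$-lines; the three points of $A_1 \cap S_{2,3}^{\text{slice}}$ and four points of $A_1 \cap S_{3,3}^{\text{slice}}$, via further $p_3$-lines, produce an incompatibility with the simultaneous requirements $\beta_2=6, \alpha_2 \leq 2, \alpha_3=2$ and with $A_2, A_3$ being valid Moser sets. Part (ii) proceeds similarly: $e'=1$ places $112222 \in A$ at the center of $A_2$, forcing $\gamma_2 \leq 3$ via antipodal-pair restrictions within the three-cube $112***$; combining this with the decomposition equations and the Pareto bounds in each case $x \in \{1,3\}$ leaves only a small handful of configurations, each ruled out using the explicit structure of $A_1$ and the geometric lines through $112222$.

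Part (iii) again splits on $x$. When $x=1$: $\alpha_3=1$ forces $A_3 \leq (2,6,6,0)$ (the only Pareto-optimal vector with $\alpha \leq 2$), so $\delta_3=0$, $\gamma_2=3$, and hence $\beta_2 \geq 9, \alpha_2 \geq 3$; the only Pareto-optimal three-cube statistic with $\delta=0, \gamma=3, \beta \geq 9$ is $(3,9,3,0)$, forcing $A_2 = (3,9,3,0)$. When $x=3$: $A_1$ contains all six points of $S_{1,3}^{\text{slice}}$, so the six combinatorial $p_3$-lines through these yield $\gamma_2 + \gamma_3 \leq 6$. If $\delta_3 = 0$ then $\gamma_2=4, \gamma_3 \leq 2, \beta_2 \geq 10$, but no Pareto-optimal three-cube vector has $\beta \geq 10$ and $\gamma \geq 4$ simultaneously, a contradiction. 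If $\delta_3 = 1$ then $\gamma_2 = \gamma_3 = 3$, and a finer analysis using the two antipodal points in $A_1 \cap S_{3,3}^{\text{slice}}$ and the six specific points of $A_1 \cap S_{2,3}^{\text{slice}}$, combined with the non-combinatorial diagonal $p_3$-lines from $A_1$ to $A_3$ through $A_2$, forces $\alpha_2 = 3$ and hence $A_2 = (3,9,3,0)$. The main technical difficulty lies in this last subcase, where the Pareto bounds alone admit $(\alpha_2, 9, 3, 0)$ for any $\alpha_2 \in \{0,1,2,3\}$ and the explicit geometric structure of the good slice must be fully invoked to pin $\alpha_2 = 3$.
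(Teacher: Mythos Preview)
The paper proves this lemma entirely by computer search: it enumerates the $16$ good-slice configurations against the $80000$, $27520$, and $4368$ Moser sets with the three target statistics and checks all pairings. Your proposal instead attempts a by-hand case analysis using the explicit structure of good slices from Lemma~\ref{sixt} and the three-cube Pareto list from Lemma~\ref{paretop}. That is a genuinely different route, and the decomposition $11**** = 111*** \cup 112*** \cup 113***$ together with the computed statistics $(4,3,3,1)$ or $(2,6,6,0)$ for $A_1$ is a sound starting point.

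However, there is a genuine error in your Pareto reasoning. In part~(iii), case $x=1$, you write ``$\alpha_3=1$ forces $A_3 \leq (2,6,6,0)$ (the only Pareto-optimal vector with $\alpha \leq 2$)''. This is invalid: a Moser set with $\alpha$-value $1$ is dominated by \emph{some} Pareto-optimal vector, but nothing forces that dominating vector to have small $\alpha$. For instance $(1,10,1,0)$ is feasible (dominated by $(3,10,1,0)$) and is not $\leq (2,6,6,0)$. The desired conclusion $\delta_3=0$ is still true, but for a different reason: if $\delta_3=1$ then $\gamma_2=2$ and $\gamma_3\leq 3$ (since $\delta_3=1$ pushes $A_3$ below one of $(3,6,3,1),(4,4,3,1),(4,6,2,1)$), forcing $\beta_2 \geq 12$; but no three-cube Pareto vector has both $\beta\geq 12$ and $\gamma\geq 2$. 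The same confusion recurs when you claim $A_2=(3,9,3,0)$ directly; one must first obtain $\gamma_3=6$, then legitimately conclude $A_3\leq(2,6,6,0)$ from $\gamma_3=6$, hence $\beta_3\leq 6$, hence $\alpha_2\geq 3$.

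Beyond this, parts (i) and (ii) reach highly constrained configurations but then simply assert ``an incompatibility'' without exhibiting the specific lines or counting that produce one; the residual freedom (the two unknown corners of $A_3$, the choice of $\alpha_2\in\{0,1,2\}$, the six $\beta$-points of $A_2$) makes this a nontrivial case check that you have not carried out. You yourself flag the final subcase of (iii) as the ``main technical difficulty'' and leave it open. So the plan is incomplete on its own terms: it may well be completable, but what you have written does not establish the lemma, whereas the paper's brute-force enumeration does.
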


\begin{proof}  This can be verified through computer search; there are $16$ possible configurations for the good slices, and one can calculate that there are $27520$ configurations for the $(5,12,12,4,1)$ slices, $4368$ configurations for the $(5,12,18,4,0)$ slices, and $80000$ configurations for the $(6,8,12,8,0)$ slices.  It is then a routine matter to inspect by computer all the potential counterexamples to the above lemma.
\end{proof}

\begin{corollary}\label{slic}  The $111***$ slice has statistics $(4,3,3,1)$, $(2,6,6,0)$, $(3,3,3,1)$, or $(1,6,6,0)$.
\end{corollary}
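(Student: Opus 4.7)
The plan is to analyze the $3$-dimensional slice $111***$ as the common side slice of the three 4D slices $11****$, $1*1***$, and $*11***$, each of which has statistics in $\{G,\alpha,\beta,\gamma\}$ by Corollary~\ref{coda}, where $G = (6,12,18,4,0)$, $\alpha = (5,12,18,4,0)$, $\beta = (5,12,12,4,1)$, and $\gamma = (6,8,12,8,0)$. The four claimed statistics for $111***$ turn out to be precisely those exhibited by a side slice of any 4D Moser set of type $G$ or $\alpha$, so the task reduces to (a) producing such a good or $\alpha$ 4D slice and (b) classifying its side slice.

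First I would handle the case where at least one of the three 4D slices is good. By Lemma~\ref{sixt}, a good set is determined up to cube symmetries by a type $xyzw \in \{1,3\}^4$, and a direct enumeration of the side slice ``first wildcard $= 1$'' against the explicit good-set description yields statistics $(4,3,3,1)$ when the first coordinate of the type is $1$ and $(2,6,6,0)$ when it is $3$, giving two of the four claimed statistics.

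Next I would show that if none of the three 4D slices is good, at least one of them must be of type $\alpha$. By Corollary~\ref{coda}, each family of four parallel corner slices contains exactly two good slices, so the family $ab****$ has two good slices among $\{13****, 31****, 33****\}$. Applying Lemma~\ref{goodgood} in its cube-symmetric forms to these good neighbors forces at least one of $\{1*1***, *11***\}$ to be good or $\alpha$: if $13****$ is good, Lemma~\ref{goodgood} (reflected in coordinate $2$) gives that $1*1***$ is good or $\alpha$; otherwise both $31****$ and $33****$ are good, and a parallel argument through the family $a*b***$ containing $1*1***$ --- using that $1*3***$ cannot be good since $11****$ is in $\{\beta,\gamma\}$ --- forces $*11***$ to be $\alpha$. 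In both cases we find a 4D slice of type $\alpha$ having $111***$ as a side slice.

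Finally I would check that every side slice of an $\alpha$-type 4D Moser set exhibits one of the four claimed statistics. For such a set, $d = 4$ pins down a type $xyzw$, and the maximality of $c = 18$ and $b = 12$ forces the $S_{1,4}$, $S_{2,4}$, $S_{3,4}$ layers to coincide with those of the corresponding good set (since the line constraints from the full $S_{1,4}$ layer leave no other possibility), so only the $5$ elements of $S_{4,4}$ are free. A routine analysis of the constraint graph on the $16$ candidate $S_{4,4}$ points --- incorporating midpoint-line constraints through each of the already-forced lower layers, in particular the $S_{1,4}$-midpoint lines of the form $\{p, x222, q\}$ where $p, q$ differ in three coordinates --- shows that the split of $S_{4,4}$ by first coordinate lies in $\{(3,2),(4,1)\}$ when the type satisfies $x = 1$ and in $\{(1,4),(2,3)\}$ when $x = 3$, exhibiting $111***$ as one of the four claimed statistics. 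The main obstacle is the awkward sub-case in the reduction step where the good slices in $ab****$ are $31****$ and $33****$, which requires the detour through the family $a*b***$; alternatively, the entire $\alpha$-structure analysis could be replaced by a finite computer enumeration of $\alpha$-type 4D Moser sets, analogous to the brute-force approach used in Lemma~\ref{paretop-4}.
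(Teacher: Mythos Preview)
Your reduction to the case where $111***$ is a side slice of an $\alpha = (5,12,18,4,0)$ set is essentially correct, but your step 3 contains a genuine gap. You assert that in any $\alpha$-set the $S_{1,4}$, $S_{2,4}$, and $S_{3,4}$ layers must coincide with those of the corresponding good set, so that the $\alpha$-set differs from a good set only in its five $S_{4,4}$ points. This is false. Your argument for $S_{1,4}$ (the type $xyzw$) and for $S_{2,4}$ (the unique $6$-point exclusion) is sound, but the forcing breaks down at the $S_{3,4}$ layer: the paper itself records (in the proof of Lemma~\ref{goodgood}) that there are $4368$ Moser sets with statistics $(5,12,18,4,0)$, whereas ``good set of some type minus one $S_{4,4}$ point'' accounts for only $16 \times 6 = 96$ configurations. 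Hence the $S_{3,4}$ layer is far from determined, and your subsequent counting of the split $(a_1,a_3)$ of $S_{4,4}$ by first coordinate does not control the side-slice statistics. Your suggested fallback of a computer enumeration would work, but is not how the paper proceeds.

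The paper's route is both simpler and avoids any intrinsic analysis of $\alpha$-sets. After reducing (as you do) to the situation where $11****$, $1*1***$, $*11***$ are all bad and (without loss of generality) $13****$ is good, the paper invokes Lemma~\ref{goodgood} not merely for parts (i)--(ii) but crucially for part (iii): the good slice $13****$ interacting with the bad (hence $\alpha$) slice $1*1***$ forces the \emph{centre} slice $121***$ to have statistics $(3,9,3,0)$. Since $131***$ is a side slice of the good set $13****$, its statistics are $(4,3,3,1)$ or $(2,6,6,0)$ by Lemma~\ref{sixt}. Subtracting $121***$ and $131***$ from the known $\alpha$-statistics of $1*1***$ then yields $111*** \in \{(3,3,3,1),(1,6,6,0)\}$ directly. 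The key point you missed is that Lemma~\ref{goodgood}(iii) already encapsulates (via computer verification over all $16 \times 4368$ pairs) exactly the structural information about $\alpha$-sets that is needed, without requiring you to classify them.
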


\begin{proof} From Corollary \ref{coda}, we know that at least one of the slices $13****, 31****, 11****$ are good.  If $11****$ or $1*1***$ is good, then the slice $111***$ has statistics $(4,3,3,1)$ or $(2,6,6,0)$, by Lemma \ref{sixt}.  By symmetry we may thus reduce to the case where $13****$ is good and $1*1***$ is bad.  Then by Lemma \ref{goodgood}, the $1*1***$ slice has statistics $(5,12,18,4,0)$ and the $121***$ slice has statistics $(3,9,3,0)$.  Since the $131***$ slice, as a side slice of the good $13****$ slice, has statistics $(4,3,3,1)$ or $(2,6,6,0)$, we conclude that the $111***$ slice has statistics $(1,6,6,0)$ or $(3,3,3,1)$, and the claim follows.
\end{proof}

\begin{corollary}\label{slic2} All corner slices have statistics $(6,12,18,4,0)$ or $(5,12,18,4,0)$.
\end{corollary}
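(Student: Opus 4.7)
My plan is to use Corollary~\ref{coda}, which restricts corner slice statistics to the four possibilities $\{(6,12,18,4,0),(5,12,18,4,0),(5,12,12,4,1),(6,8,12,8,0)\}$, and rule out the two with positive $d$-coordinate, namely $(5,12,12,4,1)$ and $(6,8,12,8,0)$. I fix an arbitrary corner slice $V_0$, view it as a 4D Moser set, and observe that each of its eight side slices is a 3D ``corner'' subspace (with all three fixed coordinates in $\{1,3\}$). By Corollary~\ref{slic} applied via the combinatorial symmetries of the cube, every such side slice has statistics in $\mathcal{T}=\{(4,3,3,1),(2,6,6,0),(3,3,3,1),(1,6,6,0)\}$; the crucial observation is that $b_V=c_V$ in every element of $\mathcal{T}$.

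The statistic $(5,12,12,4,1)$ will then be eliminated by a one-line double-counting argument: summing the identity $b_V=c_V$ over the eight side slices and applying Lemma~\ref{dci} gives $3b(V_0)=\sum_V b_V=\sum_V c_V=2c(V_0)$, while $3\cdot 12=36\ne 24=2\cdot 12$. For $(6,8,12,8,0)$ the arithmetic identity $3b=2c$ does hold ($24=24$), so finer bookkeeping is required. The constraint $\sum_V d_V=d(V_0)=8$ combined with $d_V\in\{0,1\}$ for every statistic in $\mathcal{T}$ forces $d_V=1$ on every side slice, so each has statistics in $\{(4,3,3,1),(3,3,3,1)\}$; then $\sum_V a_V=4a(V_0)=24$ with $a_V\in\{3,4\}$ over eight slices forces $a_V=3$ uniformly, pinning every side slice of $V_0$ to statistics exactly $(3,3,3,1)$.

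The main obstacle is then to extract a contradiction from this rigid structure, for which I retrace the proof of Corollary~\ref{slic}. Writing $V_0=11****$, since $V_0$ is bad Corollary~\ref{coda} applied to the family $ab****$ ensures that at least one of $\{13****,31****\}$ is good; after the reflection symmetry swapping positions $1$ and $2$, I assume $13****$ is good. The argument in Corollary~\ref{slic}'s proof (via the double count $|111***|+|121***|+|131***|=|1*1***|=39$ together with Lemma~\ref{goodgood}(iii)) shows that the side slice $111***$ of $V_0$ has statistics $(3,3,3,1)$ only when the parallel side slice $131***$ of $13****$ has statistics $(2,6,6,0)$; by Lemma~\ref{sixt}, this forces the type component of $13****$ at 6D coordinate~$3$ to equal $3$ (the opposite of the fixed value $1$). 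Running the same argument with position~$3$ reflected, the side slice $113***$ of $V_0$ having statistics $(3,3,3,1)$ forces $133***$ to have statistics $(2,6,6,0)$, which in turn forces the very same type component of $13****$ to equal $1$. These contradictory conditions on a single type component of the good slice $13****$ yield the desired contradiction, completing the proof.
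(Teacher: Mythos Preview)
Your proof is correct, and in places more streamlined than the paper's, though the two arguments for $(6,8,12,8,0)$ ultimately lean on the same structural facts.

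Your elimination of $(5,12,12,4,1)$ is genuinely nicer than the paper's. You observe that every element of $\mathcal{T}$ satisfies $b_V=c_V$, so summing over the eight side slices of $V_0$ and applying the double counting identities $\sum_V b_V=3b(V_0)$, $\sum_V c_V=2c(V_0)$ immediately gives $3b(V_0)=2c(V_0)$, which $(5,12,12,4,1)$ violates. The paper instead uses Lemma~\ref{goodgood}(ii) to force all the slices $1*1***,1*3***,*11***,*13***$ to be bad, pushes to the $3$-side to force $311***,313***\in\{(2,6,6,0),(4,3,3,1)\}$, and then extracts a parity contradiction on $a(11****)$. Your route is a one-liner once Corollary~\ref{slic} is in hand.

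For $(6,8,12,8,0)$ the two arguments diverge after the common step of pinning side slices of $V_0$ to $(3,3,3,1)$: the paper uses the family structure (Corollary~\ref{coda} on $a*b***$) to force $3*1***,3*3***$ good and then crashes $a$-values on the $3$-side, whereas you remain on the $1$-side and read off the position-$3$ type component of the good slice $13****$ twice, once via $131***=(2,6,6,0)$ and once via $133***=(2,6,6,0)$, obtaining $x_1=3$ and $x_1=1$ respectively. Both routes are short; yours avoids passing through the opposite side of the cube at the cost of invoking Lemma~\ref{sixt}'s explicit type description, while the paper's avoids the type bookkeeping at the cost of the extra family argument. One small quibble: what you call ``the reflection symmetry swapping positions $1$ and $2$'' is a coordinate permutation, not a reflection; and your appeal to Lemma~\ref{dci} for $\sum_V a_V=4a(V_0)$ is technically the $i=-1$ boundary case, but it follows from the same elementary count as in the lemma's proof.
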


\begin{proof}  Suppose first that a corner slice, say $11****$ has statistic $(6,8,12,8,0)$.  Then $111***$ and $113***$ contain one ``$d$'' point each, and have six ``$a$'' points between them, so by Corollary \ref{slic}, they both have statistic $(3,3,3,1)$.  This forces the $1*1***$, $1*3***$ slices to be bad, which by Corollary \ref{coda} forces the $3*1***,3*3***$ slices to be good.  This forces the $311***, 313***$ slices to have statistics either $(2,6,6,0)$ or $(4,3,3,1)$.  But the $311***$ slice (say) cannot have statistic $(4,3,3,1)$, since when combined with the $(3,3,3,1)$ statistics of $111***$ would give $a(*11***)=7$, which contradicts Corollary \ref{coda}; thus the $311***$ slice has statistic $(2,6,6,0)$, and similarly for $331***$.  But then $a(3*1***)=4$, which again contradicts Corollary \ref{coda}.

Thus no corner slice has statistic $(6,8,12,8,0)$.  Now suppose that a corner slice, say $11****$ has statistic $(5,12,12,4,1)$.  By Lemma \ref{goodgood}, the $1*1***, 1*3***$ slices are bad, so by repeating the preceding arguments we conclude that the $311***, 313***$ slices have statistics $(2,6,6,0)$ or $(4,3,3,1)$; in particular, their $a$-value is even.  However, the $*11***$ and $*13***$ slices are bad by Lemma \ref{goodgood}, and thus have an $a$-value of $5$; thus the $111***$ and $113***$ slices have an odd $a$-value.  Thus forces $a(11****)$ to be even; but it is equal to $5$, a contradiction.
\end{proof}

From this and Lemma \ref{dci}, we see that $A$ has statistics $(22,72,180,80,0,0,0)$.  In particular, we have $2\alpha_2(A)+\alpha_3(A) = 2$, which by double counting (cf. \eqref{alpha-1}) shows that for every line of the form $11122*$ (or a reflection or permutation thereof) intersects $A$ in exactly two points.  Note that such lines connect a ``$d$'' point to two ``$c$'' points.

Also, we observe that two adjacent ``$d$'' points, such as $111222$ and $113222$, cannot both lie in $A$; for this would force the $*13***$ and $*11***$ slices to have statistics $(4,3,3,1)$ or $(3,3,3,1)$ by Corollary \ref{slic}, which forces $a(*1****)=6$, and thus $*1****$ must be good by Corollary \ref{slic2}; but this contradicts Lemma \ref{sixt}.  Since $\alpha_3(A)=1/2$, we conclude that given any two adjacent ``$d$'' points, exactly one of them lies in $A$.  In particular, the d points of the form $***222$ consist either of those strings with an even number of $1$s, or those with an odd number of $1$s.

Let's say it's the former, thus the set contains $111222, 133222$, and permutations of the first three coordinates, but omits $113222, 333222$ and permutations of the first three coordinates. Since the ``$d$'' points $113222, 333222$ are omitted, we conclude that the ``$c$'' points $113122, 113322, 333122, 333322$ must lie in the set, and similarly for permutations of the first three and last three coordinates. But this gives at least $15$ of the $16$ ``$c$'' points ending in $22$; by symmetry this leads to $225$ $c$-points in all; but $c(A)=180$, contradiction.  This (finally!) completes the proof that $c'_{6,3}=353$.




\end{document}